\documentclass[final,leqno,onefignum,onetabnum]{siamltex1213}

\usepackage[utf8x]{inputenc}
\usepackage{amsmath,amssymb}
\usepackage{cite,hyperref,url,float}

\title{Convexity of a stochastic control functional related to importance sampling of It\^{o} diffusions} 

\author{Han Cheng Lie\footnotemark[2]\ \footnotemark[3]
}

\newtheorem{mythm}{Theorem}[section]
\newtheorem{myprop}[mythm]{Proposition}
\newtheorem{mylem}[mythm]{Lemma}
\newtheorem{mycor}[mythm]{Corollary}
\newtheorem{myasmp}[mythm]{Assumption}

\begin{document}
\maketitle

\renewcommand{\thefootnote}{\fnsymbol{footnote}}

\footnotetext[2]{Institut f\"{u}r Mathematik, Freie Universit\"at Berlin, Berlin, Germany (\email{hlie@math.fu-berlin.de}).
}
\footnotetext[3]{This research was funded by a Ph.D. scholarship from the International Max-Planck Research School for Computational Biology and Scientific Computing of the Max Planck-Institut f\"{u}r molekulare Genetik, and by the Deutsche Forschungsgemeinschaft (DFG) through grant CRC 1114.}

\renewcommand{\thefootnote}{\arabic{footnote}}

\slugger{sima}{xxxx}{xx}{x}{x--x}

\begin{abstract}
We consider the problem of rare event importance sampling, where the random variable of interest is a path functional of an It\^{o} diffusion computed up to the first exit from a $d$-dimensional bounded domain. Dupuis and Wang (\textit{Ann. Appl. Probab.}, 15 (2005), pp. 1-38) studied the importance sampling problem by formulating it as a stochastic optimal control problem, where the value function is related to the conditional cumulant generating function of the random variable. In this paper, we show that the sufficient conditions for the value function to be twice-differentiable and $\alpha$-uniformly H\"{o}lder continuous on the closure of the domain are also sufficient conditions for positive definiteness of the second variation of the control functional on the space of differentiable, $\alpha$-uniformly H\"{o}lder continuous $\mathbb{R}^d$-valued feedback controls. We derive an expression for the second variation using Kazamaki's sufficient condition for $L^q$-boundedness of exponential martingales, and using Fredholm theory to prove the finiteness of the moment generating function of the first exit time over any bounded interval containing the origin. The strict convexity result suggests that one may be able to solve the corresponding Hamilton-Jacobi-Bellman boundary value problem in a dimension-robust way, by combining convex optimisation and Monte Carlo methods. We apply the result to analyse a gradient descent algorithm proposed by Hartmann and Sch\"{u}tte (\textit{J. Stat. Mech. Theor. Exp.} (2012), P11004) for efficient rare event simulation.
\end{abstract}

\begin{keywords}
Optimal stochastic control, Hamilton-Jacobi-Bellman equation, Feynman-Kac formula, elliptic boundary value problem, rare events, first exit time, importance sampling, convex optimisation, Monte Carlo methods
\end{keywords}

\begin{AMS}
49M30, 60H30, 65C60, 82C80, 93E20
\end{AMS}

\pagestyle{myheadings}
\thispagestyle{plain}
\markboth{H. C. LIE}{CONVEXITY OF A STOCHASTIC CONTROL FUNCTIONAL}

\section{Introduction}
\label{sec:intro}

The estimation of statistical properties of rare events is a common problem in computational physics. For example, the estimation of thermodynamic free energy differences between two equilibrium states is integral to the study of complex macromolecules in computational biophysics and drug design. Accordingly, many methods for rare event estimation have been proposed, especially in the case of free energy computations. Some methods, e.g. the adaptive biasing force method \cite{darve_adaptivebiasingforce} or hyperdynamics \cite{voter_hyperdynamics,voter_methodaccelerating}, involve driving the dynamical system of interest by applying an external force; in certain cases, this is achieved by changing the (potential) energy landscape on which the system lives, e.g. metadynamics \cite{laio_parrinello_pnas}. Other methods, such as the Passerone-Parrinello method \cite{passerone}, study a variational problem and apply a least action principle, while others focus on improving the simulation of rare trajectories that dominate the statistical property of interest, e.g. the transition path sampling method \cite{tps1998} or the forward flux sampling method \cite{allenfrenkelreintenwolde}. A mathematical treatment of free energy computations is given in \cite{free_energy_computations}.

The motivation of the present article is the analysis of a gradient descent method for solving a stochastic optimal control problem, proposed by Hartmann and Sch\"{u}tte \cite{hartmannschuette_efficient}. This method shares some characteristics with the methods mentioned above: the method improves the collection of rare event statistics by tilting the potential in order to bias the path measure towards trajectories that dominate an exponential average (see, e.g. \cite{jarzynski_pre}), each modification to the potential corresponds to the application of a feedback control force, and a variational principle enters via the duality relationship between free energy and relative entropy (see, e.g. \cite{daipra}). 

In this article, we describe sufficient conditions under which the control functional of the stochastic optimal control problem is strictly convex, by rigorously deriving an expression for the second variation of the control functional, and by showing that a deterministic lower bound on the path functional suffices to guarantee strict positive definiteness of the second variation. Strict convexity implies the well-posedness of the restriction of the stochastic optimal control problem to any finite-dimensional subspace of the set of admissible feedback control functions. In particular, in the limit of infinitely large sample size and infinitesimally small descent step size, it follows that the gradient descent method mentioned above converges to the unique minimiser corresponding to the best approximation in the finite-dimensional subspace. For the functionals and systems considered here, the strict convexity result renders the importance sampling problem amenable to other methods for solving convex optimisation problems that may be more efficient than gradient descent. This is important, because the connections between the optimisation problems considered here and certain elliptic Dirichlet boundary value problems imply that one can obtain numerical methods for solving Hamilton-Jacobi-Bellman boundary value problems that scale well with the spatial dimension.

Our initial reference for the idea of formulating an importance sampling problem as a stochastic optimal control problem is \cite{hartmannschuette_efficient}, which examined the problem of accelerating simulations of rare events by using the connection between risk-sensitive, exit time control and importance sampling \cite{dupuis_mceneaney} and the aforementioned free energy-relative entropy duality relationship \cite{daipra}; the work \cite{dupuis2005} also studied stochastic optimal control formulations of importance sampling problems. The connections between importance sampling, large deviations, and differential games has been studied in \cite{WangDupuis2004}, these ideas have been further developed for rare event simulation of multiscale diffusions \cite{dupuis_wang_spiliopoulos,WeareEVE}. We shall not consider the large deviations aspect of rare event importance sampling. Instead, we shall view importance sampling as an optimisation problem defined over a set of measures; this idea has been applied to Markov decision processes in \cite{borkar1988}, and in the cross-entropy minimisation method introduced by Kullback \cite{Kullback59informationtheory}. Cross-entropy minimisations have been applied to importance sampling for rare event estimation by Rubenstein in the work \cite{Rubinstein199789}, and recently by Asmussen, Kroese, and Rubenstein in \cite{asmussen_kroese_rubinstein}. In molecular dynamics, importance sampling and cross-entropy minimisation have been treated in \cite{hartmann_char}, and Zhang et. al. \cite{sisc_crossentropy}. 

The present work may differ from some of those mentioned above in the following ways. First, although we first consider the importance sampling problem, our primary concern is an analysis of the stochastic optimal control problem via the control functional. Second, our analysis is motivated by the desire to better understand what numerical methods may be applied to solve such problems efficiently in high-dimensional domains. Third, we employ functional analytic tools from the theory for classical solutions to elliptic partial differential equations, as opposed to large deviations techniques. Furthermore, since the approach we consider does not involve minimising the relative entropy alone, it differs from the cross-entropy minimisation methods mentioned earlier. To the best of our knowledge, no work to date has addressed the question of whether the control functional is convex over a suitably chosen function space. An advantage of our functional-analytic approach is that the sufficient conditions we specify are deterministic; such conditions may be easier to verify in practical situations.

\section{Overview}
\label{sec:overview}

Let $d\in\mathbb{N}$ be the dimension of the state space (configuration space) of a stochastic dynamical system whose paths belong to the canonical filtered probability space $(\Theta,\mathcal{F},(\mathcal{F}_t)_{t\ge 0}, P)$, where $\Theta= C([0,\infty);\mathbb{R}^d)$ and $P$ is the standard Wiener measure. Consider an It\^{o} diffusion, i.e. a stochastic dynamical system with continuous paths that satisfies the stochastic differential equation (SDE)
\begin{equation}\label{eq:controlled_sde}
 dX^u_t=\left(u-\nabla V\right)(X^u_t)dt+\sqrt{2\varepsilon}dB_t,\hskip1ex X^u_0=x,
\end{equation}
where $u=(u_t)_{t\ge 0}$ denotes a predictable control process, $V\in C^1(\mathbb{R}^d;\mathbb{R})$ is an (potential) energy function that is bounded from below and that satisfies the growth conditions
\begin{subequations}\label{eq:growth_conditions}
 \begin{align}
 \vert V(x')\vert &\le K_0(1+\vert x'\vert)
 \label{eq:linear_growth_condition_onV}
 \\
\vert \nabla V(x')\vert^2&\le K_1^2(1+\vert x'\vert^2)
 \label{eq:quadratic_growth_condition_on_gradV}
  \end{align}
\end{subequations}
for some constants $K_0,K_1>0$ and for all $x'\in\mathbb{R}^d$, $\varepsilon>0$ is a fixed temperature, and $B$ is the standard Brownian motion with respect to $P$. In the context of computational physics and molecular dynamics, the SDE (\ref{eq:controlled_sde}) is the overdamped Langevin equation that models the evolution of a large molecule in equilibrium with a heat bath composed of many smaller solvent molecules, where $\nabla V$ corresponds to the force field that arises from bonded and non-bonded interactions of the constituent atoms, and the white noise term models the effect of the heat bath on the molecule. The control $u$ models the effect of a controllable external force that acts on the molecule, such as a pressure or electromagnetic field. Given $u$, the corresponding solution to (\ref{eq:controlled_sde}) is a random variable $X^u:\Theta\to\Theta$. Letting $P^x(A):=P(\theta\in A\vert \theta_0=x)$ denote the conditioning of $P$ on the deterministic initial condition $x\in\mathbb{R}^d$, we then write the law of $X^u$, its conditioned version, and the conditional expectation of a path functional $\phi\in L^1(\Theta;\mathbb{R})$ as
\begin{equation}\label{eq:mu_c_x}
\mu^u:=P\circ (X^u)^{-1},\hskip1ex \mu^{u,x}:= P^x\circ (X^u)^{-1},\hskip1ex E^{u,x}[\phi]=\int_\Theta \phi(\theta)\mu^{u,x}(d\theta).
\end{equation}
We shall refer to $\mu^{0,x}$ as the `reference measure', because all desired statistics shall defined with respect to $\mu^{0,x}$.

\subsection{Importance sampling and stochastic optimal control}
\label{ssec:stochastic_optimal_control_problem}

For a given path functional $W\in L^1(\Theta,\mu^{0,x};\mathbb{R})$, we wish to estimate the statistics of $W$ with respect to $\mu^{0,x}$. In the setting of importance sampling, an optimal importance sampling measure is any measure that yields a zero-variance estimator of the desired expected value. In our case, the optimal importance sampling measure solves the problem
\begin{equation}\label{eq:importance_sampling_problem}
 \text{min}\left\{ E^{u,x}\left[\left(\phi\frac{d\mu^0}{d\mu^u}-E^{u,x}\left[\phi\frac{d\mu^0}{d\mu^u}\right]\right)^2\right]\ \biggr\vert\ \left.\mu^u\right\vert_{\mathcal{F}_t} \ll \left.\mu^0\right\vert_{\mathcal{F}_t},\ \left.\mu^0\right\vert_{\mathcal{F}_t}\ll \left.\mu^u\right\vert_{\mathcal{F}_t}\hskip1ex\forall t>0\right\}.
\end{equation}
Thus, we seek to minimise the variance (with respect to $\mu^{u,x}$) of $\phi\tfrac{d\mu^0}{d\mu^u}$ over the set of measures $\mu^u$ that are mutually equivalent to $\mu^0$ when restricted to $\mathcal{F}_t$ for all $t>0$. Recall that if the control process $u'$ satisfies Kazamaki's or Novikov's condition, then the Radon-Nikodym derivative of $\mu^{u+u'}$ with respect to $\mu^u$ is a uniformly integrable exponential martingale with respect to $\mu^{u+u'}$, and the desired mutual equivalence holds, by the Cameron-Martin-Girsanov theorem on change of measure. The Radon-Nikodym derivative of $\mu^u$ with respect to $\mu^{u+u'}$ is 
\begin{equation}\label{eq:exponential_martingale}
 \left.\frac{d\mu^u}{d\mu^{u+u'}}\right\vert_{\mathcal{F}_t}=\mathcal{E}(M^{-u'})_t:=\exp\left[M^{-u'}_t-\frac{1}{2}\langle M^{-u'}\rangle_t\right]
\end{equation}
where $M^{u'}$ is a continuous local martingale with respect to $(\mathcal{F}_t)_{t\ge 0}$ and $\langle M^{u'}\rangle$ is the associated quadratic variation process, 
\begin{equation*}
M^{u'}_t:=\frac{1}{\sqrt{2\varepsilon}}\int_0^t u'_s dB_s,\hskip2ex
\langle M^{u'}\rangle_t:=\frac{1}{2\varepsilon}\int_0^t \vert u'_s\vert^2 ds.
\end{equation*}
By (\ref{eq:exponential_martingale}), we have the reweighting formula
\begin{equation}\label{eq:reweighting_formula}
 E^{u,x}\left[W\right]=E^{u+u',x}\left[W\mathcal{E}(M^{-u'})\right].
\end{equation}
A useful mnemonic device for applying the reweighting formula (\ref{eq:reweighting_formula}) is that the sum of the control superscripts on the left-hand side equals the sum of the control superscripts on right-hand side: $u=(u+u')+(-u')$. 

We now give some structure to the path functional of interest. Let $\Omega\subset\mathbb{R}^d$ be an open, bounded, and connected set, where every point on the boundary $\partial\Omega$ satisfies an exterior sphere condition. Define the first exit time of $X^u$ from $\Omega$ by
\begin{equation}\label{eq:first_exit_time}
 \tau(X^u):=\inf\left\{t>0\ \vert\ X^u_t\notin \Omega\right\}.
\end{equation}
Let $\kappa_r\in C(\Omega; [0,\infty))$ and $\kappa_t\in C(\partial\Omega;\mathbb{R})$. Define the path functional $W$ by
\begin{equation}\label{eq:work}
 W(X^u):=\int_0^{\tau(X^u)}\kappa_r(X^u_s)ds+\kappa_t\left(X^u_{\tau(X^u)}\right).
\end{equation}
We shall refer to $\kappa_r$ and $\kappa_t$ as the `running cost' and `terminal cost' respectively. Let $\sigma>0$ be fixed, and define the functions 
\begin{subequations}
 \begin{align}
   \psi(\sigma,x)&:=E^{0,x}\left[\exp(-\sigma W)\right]
   \label{eq:mgf}
   \\
   F(\sigma,x)&:=-\sigma^{-1}\log \psi(\sigma,x).
   \label{eq:cgf}
 \end{align}
\end{subequations}
Observe that (\ref{eq:cgf}) resembles Jarzynski's equality \cite{jarzynski_1997} in statistical physics, which relates the thermodynamic free energy difference between two equilibrium states of a statistical-mechanical system to an exponential average of nonequilibrium work. In the statistical physics context, $\sigma$ may be understood as an inverse temperature. Two important differences between the situation we consider and the situation considered by Jarzynski in \cite{jarzynski_1997} are that, in the former, the control force applied to the system is a function of the state and the control is applied up to a random stopping time, whereas in the latter, the control force is applied according to a preset schedule and over a deterministic time.

We wish to compute, for given $\sigma$ and $x$, the value of $F(\sigma,x)$. By (\ref{eq:mgf}), it suffices to compute $\psi(\sigma,x)$. Provided that $u$ satisfies Novikov's condition, we may combine (\ref{eq:mgf}) and (\ref{eq:reweighting_formula}) in order to obtain
\begin{equation}\label{eq:mgf_01}
 \psi(\sigma,x)=E^{u,x}\left[\exp(-\sigma W)\mathcal{E}(M^{-u})\right].
 \end{equation}
Thus, we may formulate the task of computing $\psi(\sigma,x)$ as an importance sampling problem of the form (\ref{eq:importance_sampling_problem}):
\begin{equation}\label{eq:importance_sampling_problem_parametrised_by_control}
 \text{min}\left\{E^{u,x}\left[\left(\exp(-\sigma W)\mathcal{E}(M^{-u})-\psi(\sigma,x)\right)^2\right]\ \biggr\vert\ \mathcal{E}(M^{-u})\text{ uniformly integrable}\right\}.
\end{equation}
In order to reformulate the importance sampling problem (\ref{eq:importance_sampling_problem_parametrised_by_control}) as a stochastic optimal control problem, we define
\begin{equation}\label{eq:ch3_nonequilibrium_estimator}
 K^{\sigma,u,\alpha}:=W+(2\sigma)^{-1}\langle M^u\rangle_\tau+\alpha\sigma^{-1}M^u_\tau,\quad\alpha\in\left\{0,1\right\}.
\end{equation}
By (\ref{eq:exponential_martingale}) and (\ref{eq:mgf_01}), it follows that $\psi(\sigma,x)=E^{u,x}\left[\exp\left(-\sigma K^{\sigma,u,1}\right)\right]$. For an arbitrary, adapted continuous local martingale $M$ and stopping time $T$, we shall write
\begin{equation*}
 M^T_t:=\begin{cases} 
        M_{T\wedge t} & \left\{0<T\right\} \\
        0 & \left\{0=T\right\}.
       \end{cases}
\end{equation*}
Since the diffusion matrix is a strictly positive multiple of the identity matrix, it holds that $\tau\in L^1(\mu^{u,x})$ (see \cite[Section 5.7, Lemma 7.4]{karatzas_shreve}). Thus, if $u\in L^\infty(\Omega)$, then it follows that $(M^u)^\tau$ is a $L^2(\mu^{u,x})$-bounded martingale, since 
\begin{equation*}
\lim_{t\to \infty} E^{u,x}[((M^u)^\tau_t)^2]=E^{u,x}[\langle (M^u)^\tau\rangle_\infty]\le \Vert u\Vert^2_{L^\infty} E^{u,x}[\tau].
\end{equation*}
Since $L^p$-boundedness for $p>1$ implies uniform integrability, we may apply the optimal stopping theorem to obtain $E^{u,x}[M^u_\tau]=0$ and $E^{u,x}[K^{\sigma,u,1}]=E^{u,x}[K^{\sigma,u,0}]$. By the strict concavity of the logarithm, Jensen's inequality and (\ref{eq:cgf}) yield
\begin{equation}\label{eq:optimisation_inequality}
 F(\sigma,x)\le E^{u,x}\left[ K^{\sigma,u,0}\right]=:\phi^{\sigma,x}(u)
\end{equation}
where inequality holds if and only if the random variable $K^{\sigma,u,1}$ is $\mu^{u,x}$-almost surely constant (see, e.g. \cite[Theorem 5]{mcshane1937}). Thus, we have shown that we can compute $F(\sigma,x)$ by solving the stochastic optimal control problem
\begin{equation}\label{eq:stochastic_optimal_control_problem}
  \min_{u\in\mathcal{U}} \ \phi^{\sigma,x}(u)\hskip2ex\text { subject to (2.1),}
\end{equation}
where $\mathcal{U}$ denotes the set of all previsible control processes. It was shown in \cite{hartmannschuette_efficient} that there exists a unique, Markovian optimal control $u^\sigma_{\text{opt}}$ given by
\begin{equation}\label{eq:optimal_control}
 u^{\sigma}_{\text{opt}}(x')=-2\varepsilon\sigma \nabla_x F(\sigma,x')\hskip1ex\forall x'\in\Omega.
\end{equation}
The link between the optimal control and importance sampling problems is evident from (\ref{eq:optimal_control}) since the optimal (zero-variance) importance sampling measure associated to $u^\sigma_{\text{opt}}$ is defined in terms of $F$.

\subsection{Connections to elliptic partial differential equations}
\label{ssec:connection_to_elliptic_pdes}

In this section, we recall some fundamental results concerning the functions $\psi(\sigma,\cdot)$ and $F(\sigma,\cdot)$. We shall consider second order partial differential operators of the form
\begin{equation}\label{eq:elliptic_partial_differential_operator}
 L:=\sum_{i,j=1}^d a_{ij} \frac{\partial}{\partial x_i}\frac{\partial}{\partial x_j}+\sum_{i=1}^d b_i\frac{\partial}{\partial x_i}+c
\end{equation}
where the functions $(a_{ij})_{i,j},(b_i)_i,c\in C(\Omega;\mathbb{R})$. Following the nomenclature from stochastic analysis (see \cite[Section 5.7A, Definition 7.1]{karatzas_shreve}), we shall say that $L$ is `uniformly elliptic' on $\Omega$ if there exists a constant $\lambda>0$ such that
\begin{equation}
 \lambda \vert \xi\vert^2\le \xi\cdot a\xi\hskip2ex\forall\xi\ne 0.
\end{equation}
Recall that the linear Dirichlet boundary value problem is given by
\begin{subequations}\label{eq:linear_elliptic_bvp}
\begin{align}
Lv(x)=f(x) \hskip1ex x&\in\Omega,
\label{eq:linear_elliptic_pde}
\\
v(y)=\varphi(y) \hskip1ex y&\in \partial\Omega,
\label{eq:linear_elliptic_bc}
\end{align}
\end{subequations}
for $f\in C(\Omega;\mathbb{R})$ and $\varphi\in C(\partial\Omega;\mathbb{R})$. Let $u$ in (\ref{eq:controlled_sde}) be an element of $C(\Omega;\mathbb{R}^d)$. Then the infinitesimal generator corresponding to the SDE (\ref{eq:controlled_sde}) given by
\begin{equation}\label{eq:infinitesimal_generator_controlled_sde}
 \mathcal{A}^u:=\varepsilon\Delta+\left(u-\nabla V\right)\cdot\nabla
\end{equation}
is a second order, uniformly elliptic partial differential operator. Define the linear Dirichlet boundary value problem
\begin{subequations}\label{eq:feynman_kac_bvp}
\begin{align}
\mathcal{A}^0 \psi(\sigma,x)-\sigma\kappa_r(x)\psi(\sigma,x)=0 \hskip1ex x&\in\Omega
\label{eq:feynman_kac_pde}
\\
\psi(\sigma,y)=\exp(-\sigma \kappa_t(y)) \hskip1ex y&\in \partial\Omega.
\label{eq:feynman_kac_bc}
\end{align}
\end{subequations}
We can rewrite (\ref{eq:feynman_kac_bvp}) as a problem of the form (\ref{eq:linear_elliptic_bvp}), with $L:=\mathcal{A}^0-\sigma\kappa_r$ and $f=0$ on $\Omega$, and with $\varphi=\exp(-\sigma\kappa_t)$ on $\partial\Omega$. The following classical result establishes (\ref{eq:mgf}) as the `Feynman-Kac formula' for the solution to (\ref{eq:feynman_kac_bvp}):
\begin{myprop}\label{prop:feynman_kac_representation}
If
\begin{enumerate}
 \item[(i)] $\Omega$ is an open, bounded domain,
 \item[(ii)] the first-order coefficient $-\nabla V$ satisfies the growth conditions (\ref{eq:growth_conditions}), and
\item[(iii)] $\kappa_r\in C(\overline{\Omega};[0,\infty))$ and $\kappa_t\in C(\partial\Omega;\mathbb{R})$,
\end{enumerate}
then there exists a unique solution $\psi(\sigma,\cdot)\in C^2(\Omega;\mathbb{R})\cap C(\overline{\Omega};\mathbb{R})$ that solves (\ref{eq:feynman_kac_bvp}), and $\psi(\sigma,\cdot)$ is of the form (\ref{eq:mgf}).
\end{myprop}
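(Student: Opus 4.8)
The plan is to first extract a classical solution of the Dirichlet problem (\ref{eq:feynman_kac_bvp}) from the standard theory for linear uniformly elliptic equations, then to prove uniqueness via the maximum principle, and finally to identify that solution with the expectation (\ref{eq:mgf}) by an It\^o--Dynkin argument with localisation and optional stopping.

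For existence, rewrite (\ref{eq:feynman_kac_bvp}) in the form (\ref{eq:linear_elliptic_bvp}) with $L=\mathcal{A}^0-\sigma\kappa_r$, $f\equiv 0$, and $\varphi=\exp(-\sigma\kappa_t)$. The operator $L$ is uniformly elliptic on the bounded domain $\Omega$, and its coefficients are bounded there, since $-\nabla V$ is continuous (hence bounded on the compact set $\overline\Omega$) and $\kappa_r$ is continuous and nonnegative, so that the zeroth-order coefficient $-\sigma\kappa_r$ is continuous and $\le 0$. The exterior sphere condition furnishes a barrier at each boundary point, so the continuous data $\varphi$ is attained continuously; under these hypotheses the classical solvability theory for the linear Dirichlet problem (as underlies \cite[Section~5.7]{karatzas_shreve}) yields a solution $\psi(\sigma,\cdot)\in C^2(\Omega;\mathbb{R})\cap C(\overline\Omega;\mathbb{R})$. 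For uniqueness, the sign condition $-\sigma\kappa_r\le 0$ is decisive: the difference of two solutions solves the homogeneous problem with zero boundary data, and the weak maximum principle for uniformly elliptic operators with nonpositive zeroth-order coefficient on a bounded domain forces it to vanish on $\overline\Omega$.

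To identify $\psi(\sigma,\cdot)$ with (\ref{eq:mgf}), fix $x\in\Omega$, let $X^0$ solve (\ref{eq:controlled_sde}) with $u=0$ and $X^0_0=x$, put $R_t:=\int_0^t\kappa_r(X^0_s)\,ds$, and apply It\^o's formula to $Y_t:=\exp(-\sigma R_t)\,\psi(\sigma,X^0_t)$. Since $\psi(\sigma,\cdot)$ is $C^2$ only on the open set $\Omega$, I would localise: choose open sets $\Omega_n$ with $\overline{\Omega_n}\subset\Omega$ and $\bigcup_n\Omega_n=\Omega$, and let $\tau_n$ be the first exit time of $X^0$ from $\Omega_n$. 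On $[0,\tau_n]$ the finite-variation part of $Y$ equals $\int_0^{t\wedge\tau_n}\exp(-\sigma R_s)\bigl(\mathcal{A}^0\psi(\sigma,\cdot)-\sigma\kappa_r\psi(\sigma,\cdot)\bigr)(X^0_s)\,ds$, which vanishes by (\ref{eq:feynman_kac_pde}), while the stochastic-integral part has an integrand that is bounded on $[0,\tau_n]$ because $\psi(\sigma,\cdot)$ and $\nabla\psi(\sigma,\cdot)$ are bounded on the compact set $\overline{\Omega_n}$. Hence $t\mapsto Y_{t\wedge\tau_n}$ is a true martingale and $E^{0,x}[Y_{t\wedge\tau_n}]=Y_0=\psi(\sigma,x)$ for every $t\ge 0$ and $n$.

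It then remains to pass to the limit. Since $\kappa_r\ge 0$ one has $0\le\exp(-\sigma R_t)\le 1$, and $\psi(\sigma,\cdot)$ is bounded on the compact set $\overline\Omega$, so the family $\{Y_{t\wedge\tau_n}\}$ is uniformly bounded and bounded convergence applies. Because the diffusion matrix is a strictly positive multiple of the identity, $\tau:=\tau(X^0)$ is $P^x$-a.s.\ finite (indeed in $L^1$, as recalled in the text via \cite[Section~5.7, Lemma~7.4]{karatzas_shreve}); one checks that $\tau_n\uparrow\tau$ a.s.\ by path continuity. Letting first $t\to\infty$ and then $n\to\infty$, and using path continuity together with the boundary condition (\ref{eq:feynman_kac_bc}) and the continuity of $\psi(\sigma,\cdot)$ up to $\partial\Omega$, gives $Y_{t\wedge\tau_n}\to\exp(-\sigma R_\tau)\psi(\sigma,X^0_\tau)=\exp(-\sigma R_\tau)\exp(-\sigma\kappa_t(X^0_\tau))=\exp(-\sigma W)$ a.s., whence $\psi(\sigma,x)=E^{0,x}[\exp(-\sigma W)]$, i.e.\ (\ref{eq:mgf}). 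I expect the only delicate point to be the existence step: the coefficients $-\nabla V$ and $\kappa_r$ are assumed merely continuous rather than H\"older continuous, so one must invoke a form of the classical solvability theorem that tolerates continuous (bounded, uniformly elliptic) coefficients together with the exterior sphere condition, or else produce a strong $W^{2,p}_{\mathrm{loc}}$ solution and bootstrap its interior regularity; the maximum-principle uniqueness and the It\^o identification are then routine.
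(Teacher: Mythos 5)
Your argument is, in substance, the same route as the paper's: the paper's own proof is just a citation of \cite[Section 5.7, Proposition 7.2 and Lemma 7.4]{karatzas_shreve}, and your It\^o/localisation/optional-stopping identification of a classical solution with $E^{0,x}[\exp(-\sigma W)]$, together with the integrability (hence a.s.\ finiteness) of the exit time, is exactly the content of those results. This part of your proposal is correct, and it also delivers uniqueness directly (any $C^2(\Omega)\cap C(\overline{\Omega})$ solution equals the expectation), so your separate weak-maximum-principle argument, while correct, is redundant.

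The one genuine soft spot is the existence step, which you flag but do not close, and the repair you sketch does not work as stated. With $\nabla V$ and $\kappa_r$ merely continuous you can indeed produce a strong solution in $W^{2,p}_{\mathrm{loc}}(\Omega)\cap C(\overline{\Omega})$ (the principal part is the constant matrix $\varepsilon I$ and the lower-order coefficients are bounded), but you cannot bootstrap it to $C^2(\Omega)$: rewriting the equation as $\varepsilon\Delta\psi=\sigma\kappa_r\psi+\nabla V\cdot\nabla\psi$ only makes the right-hand side continuous, and continuity of the inhomogeneity is not enough for twice differentiability --- already $\Delta u=f$ with $f$ merely continuous can fail to have $C^2$ solutions; Dini or H\"older continuity is the relevant threshold. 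Likewise, there is no classical solvability theorem for $C^2(\Omega)\cap C(\overline{\Omega})$ solutions that "tolerates" merely continuous lower-order coefficients: the results behind the citation (and behind Theorem \ref{thm:feynman_kac_representation_uniformly_elliptic_case_uniformly_holder}, i.e.\ \cite[Theorems 6.13--6.14]{gilbarg_trudinger}) require locally H\"older continuous coefficients and data. In the paper this hypothesis is in fact supplied where the proposition is used, namely under Assumption \ref{asmp:regularity_asmp_dV_domain}, and \cite[Proposition 7.2]{karatzas_shreve} itself is a representation/uniqueness statement that presupposes a classical solution. So the honest formulation of your proof is: quote existence under H\"older regularity of $\nabla V$ and $\kappa_r$ (or assume a classical solution is given), and then your It\^o identification and uniqueness arguments go through unchanged.
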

\begin{proof}
  See \cite[Section 5.7, Proposition 7.2]{karatzas_shreve} and \cite[Section 5.7, Lemma 7.4]{karatzas_shreve}.
\end{proof}

Proposition \ref{prop:feynman_kac_representation} is important in establishing a link between the function $\psi(\sigma,\cdot)$ defined in (\ref{eq:mgf}) and the linear Dirichlet boundary value problem (\ref{eq:feynman_kac_bvp}). In \S\ref{sec:variational_analysis}, we shall require more regularity on the solution $\psi(\sigma,\cdot)$ in order to show $L^2$-boundedness of exponential martingales. Since we will also work with continuous functions that are bounded on the compact set $\overline{\Omega}$, we also wish to know when the optimal control $u^\sigma_{\text{opt}}$ defined in (\ref{eq:optimal_control}) is bounded on $\overline{\Omega}$. Recall that a bounded domain $\Omega\subset\mathbb{R}^d$ is of class $C^{k,\alpha}$ for $k\in\mathbb{N}_0$ and $0\le \alpha\le 1$ if $\partial\Omega$ is the graph of a $C^{k,\alpha}$ function of $d-1$ coordinates \cite[Section 6.2]{gilbarg_trudinger}. 
\begin{mythm}\label{thm:feynman_kac_representation_uniformly_elliptic_case_uniformly_holder}
Let $0\le \alpha\le 1$ and $L$ be uniformly elliptic. If 
\begin{enumerate}
 \item[(i)] $\Omega$ is an open, bounded, $C^{2,\alpha}$ domain,
 \item[(ii)] the coefficients of $L$ and $f$ in (\ref{eq:linear_elliptic_pde}) belong to $C^{\alpha}(\overline{\Omega})$, 
 \item[(iii)]the zeroth-order coefficient of $L$ is nonpositive, i.e. $c\le 0$ on $\Omega$, and
 \item[(iv)] $\kappa_t \in C^{2,\alpha}(\overline{\Omega})$,
\end{enumerate}
then the Dirichlet problem (\ref{eq:linear_elliptic_bvp}) has a unique solution $v\in C^{2,\alpha}(\overline{\Omega})$.
\end{mythm}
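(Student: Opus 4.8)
The statement is the classical Schauder existence--uniqueness theorem for the Dirichlet problem, and I would prove it by the \emph{method of continuity}, homotoping $L$ to the Laplacian. The plan has three parts: a reduction to homogeneous boundary data together with a maximum-principle uniqueness argument, the continuity method for existence, and the global Schauder a priori estimate that makes the continuity method work.

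First I would reduce to the case of zero boundary data. Under hypothesis (iv) the boundary datum $\varphi$ in (\ref{eq:linear_elliptic_bvp}) is the restriction to $\partial\Omega$ of a function $\Phi\in C^{2,\alpha}(\overline{\Omega})$ --- in the application of interest $\Phi=\exp(-\sigma\kappa_t)$, which inherits $C^{2,\alpha}$ regularity from $\kappa_t$. Setting $w:=v-\Phi$, the problem becomes $Lw=f-L\Phi=:\tilde f$ on $\Omega$, $w=0$ on $\partial\Omega$; since the coefficients of $L$ and $f$ lie in $C^{\alpha}(\overline{\Omega})$ and $\Phi\in C^{2,\alpha}(\overline{\Omega})$, we have $\tilde f\in C^{\alpha}(\overline{\Omega})$. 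It then suffices to show that $L:\mathcal{B}_1\to\mathcal{B}_2$ is a bijection, where $\mathcal{B}_1:=\{w\in C^{2,\alpha}(\overline{\Omega}):w|_{\partial\Omega}=0\}$ and $\mathcal{B}_2:=C^{\alpha}(\overline{\Omega})$. Injectivity is immediate from hypothesis (iii): if $Lw=0$ with $w\in\mathcal{B}_1$, then since $c\le 0$ and $\Omega$ is bounded, the weak maximum principle for uniformly elliptic operators with nonpositive zeroth-order term (see \cite[Section 3.1]{gilbarg_trudinger}) forces $\sup_{\Omega}|w|\le\sup_{\partial\Omega}|w|=0$. Note that the sign condition (iii) is precisely what lets us bypass the Fredholm alternative and argue directly.

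For surjectivity I would use the method of continuity. Put $L_t:=tL+(1-t)\Delta$ for $t\in[0,1]$; each $L_t$ is uniformly elliptic on $\Omega$ with ellipticity constant bounded below uniformly in $t$, has coefficients in $C^{\alpha}(\overline{\Omega})$ with $C^{\alpha}$-norms bounded uniformly in $t$, and has zeroth-order coefficient $tc\le 0$. Two inputs are needed. (a) The global Schauder estimate on the $C^{2,\alpha}$ domain $\Omega$: $\|w\|_{C^{2,\alpha}(\overline{\Omega})}\le C\bigl(\|L_t w\|_{C^{\alpha}(\overline{\Omega})}+\|w\|_{C^{0}(\overline{\Omega})}\bigr)$ with $C$ independent of $t$; combined with the maximum-principle bound $\|w\|_{C^{0}(\overline{\Omega})}\le C\|L_t w\|_{C^{0}(\overline{\Omega})}$ --- valid because $tc\le 0$ and $\Omega$ is bounded, \cite[Theorem 3.7]{gilbarg_trudinger} --- this yields the uniform a priori estimate $\|w\|_{C^{2,\alpha}(\overline{\Omega})}\le C\|L_t w\|_{C^{\alpha}(\overline{\Omega})}$ for all $w\in\mathcal{B}_1$. (b) Solvability at $t=0$: the Poisson problem $\Delta w=g$ on $\Omega$, $w=0$ on $\partial\Omega$, with $g\in C^{\alpha}(\overline{\Omega})$, has a solution in $C^{2,\alpha}(\overline{\Omega})$ by classical potential theory (Newtonian potential plus a harmonic corrector, whose boundary regularity again uses $\Omega\in C^{2,\alpha}$). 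Given (a) and (b), the abstract continuity theorem \cite[Theorem 5.2]{gilbarg_trudinger} gives that $L_1=L:\mathcal{B}_1\to\mathcal{B}_2$ is onto, hence $v:=w+\Phi\in C^{2,\alpha}(\overline{\Omega})$ solves (\ref{eq:linear_elliptic_bvp}); uniqueness in this class is the maximum-principle argument applied to the difference of two solutions.

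The main obstacle is ingredient (a), specifically the boundary part of the global Schauder estimate: the interior estimate is the comparatively routine freeze-coefficients-and-rescale argument, but up to the boundary one must straighten $\partial\Omega$ by $C^{2,\alpha}$ diffeomorphisms, prove the half-space Schauder estimate for constant-coefficient operators with a flat boundary portion, and then patch with a finite cover and partition of unity while tracking the dependence of the constant on $\lambda$, on the $C^{\alpha}$-norms of the coefficients, and on the $C^{2,\alpha}$ atlas of $\partial\Omega$. This is exactly where hypothesis (i) is indispensable: the weaker exterior-sphere condition imposed on $\partial\Omega$ elsewhere in the paper would not deliver a $C^{2,\alpha}$ solution up to the boundary. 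Since all of this is standard, in the actual write-up one may instead simply invoke \cite[Theorem 6.14]{gilbarg_trudinger} (or Theorem 6.13), whose hypotheses match (i)--(iv) once $\varphi$ is taken to be the trace of $\Phi\in C^{2,\alpha}(\overline{\Omega})$, with (iii) supplying the nonpositivity of $c$ that guarantees the kernel is trivial.
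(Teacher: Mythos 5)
Your proposal is correct and matches the paper, whose entire proof is a citation of \cite[Section 6.3, Theorem 6.14]{gilbarg_trudinger}; what you have written out (reduction to homogeneous boundary data, uniqueness via the weak maximum principle using $c\le 0$, and the continuity method resting on the global Schauder estimate and solvability for the Laplacian) is precisely the standard proof of that cited theorem, and you note the citation yourself at the end. No gaps.
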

\begin{proof}
 See \cite[Section 6.3, Theorem 6.14]{gilbarg_trudinger}.
\end{proof}

By the definitions (\ref{eq:mgf}) and (\ref{eq:cgf}), it holds that $\psi(\sigma,\cdot)=\exp(-\sigma F(\sigma,\cdot))$. By the chain rule from ordinary calculus, we can formally transform the linear Dirichlet problem (\ref{eq:feynman_kac_bvp}) into the nonlinear Dirichlet problem 
\begin{subequations}\label{eq:hamilton_jacobi_bellman_bvp}
\begin{align}
\mathcal{A}^0 F(\sigma,x)-\varepsilon\sigma \vert \nabla_x F(\sigma,x)\vert^2+\kappa_r(x)=0 \hskip1ex x&\in\Omega
\label{eq:hamilton_jacobi_bellman_pde}
\\
F(\sigma,y)=\kappa_t(y) \hskip1ex y&\in \partial\Omega,
\label{eq:hamilton_jacob_bellman_bc}
\end{align}
\end{subequations}
where (\ref{eq:hamilton_jacobi_bellman_pde}) is the Hamilton-Jacobi-Bellman (HJB) equation of the stochastic optimal control problem (\ref{eq:stochastic_optimal_control_problem}). If there exists a unique solution $F(\sigma,\cdot)$ of (\ref{eq:hamilton_jacobi_bellman_bvp}), then it is the value function of the stochastic optimal control problem. In general, there are no classical solutions to (\ref{eq:hamilton_jacobi_bellman_bvp}), and one must search for a viscosity solution. Although viscosity solutions are interesting objects in their own right, our present interest pertains solely to classical solutions. For completeness, we shall consider sufficient conditions for which there exists a classical solution to the nonlinear Dirichlet problem (\ref{eq:hamilton_jacobi_bellman_bvp}).

Let $\Gamma:=\Omega\times\mathbb{R}\times\mathbb{R}^d\times\mathbb{R}^{d\times d}$, $\gamma=(x,z,p,r)\in \Gamma$ be arbitrary, and $\mathcal{H}:\Gamma\to \mathbb{R}$ be defined by
 \begin{equation}\label{eq:hamilton_jacobi_map}
  \mathcal{H}(x,z,p,r):=\varepsilon\left(\sum_i r_{ii}\right)-\nabla V(x)\cdot p-\varepsilon\sigma\vert p\vert^2+\kappa_r(x).
 \end{equation}
From (\ref{eq:hamilton_jacobi_map}), it follows that $\mathcal{H}$ is constant with respect to $z$, concave with respect to $p$, and linear with respect to $r$. Note that we can rewrite (\ref{eq:hamilton_jacobi_bellman_pde}) as
\begin{equation*}
 \mathcal{H}(x,F(\sigma,x),\nabla_x F(\sigma,x),\nabla^2_x F(\sigma,x))=0\quad x\in \Omega.
\end{equation*}
We have
\begin{mythm}\label{thm:classical_solution_of_hamilton_jacobi_bellman_bvp}
 Let $\mathcal{H}$ be defined as in (\ref{eq:hamilton_jacobi_map}).
 \begin{enumerate}
  \item[(i)] If $\Omega$ is a bounded domain satisfying an exterior sphere condition at each boundary point, and
  \item[(ii)] $\nabla V\in C^2(\Omega;\mathbb{R}^d)$ and $\kappa_r\in C^2(\Omega;\mathbb{R})$,
 \end{enumerate}
then the classical Dirichlet problem (\ref{eq:hamilton_jacobi_bellman_bvp}) is uniquely solvable in $C^2(\Omega;\mathbb{R})\cap C(\overline{\Omega};\mathbb{R})$ for any terminal cost  $\kappa_t\in C(\partial\Omega;\mathbb{R})$.
\end{mythm}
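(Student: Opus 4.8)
The plan is to exploit the fact that the quasilinear Hamilton--Jacobi--Bellman equation (\ref{eq:hamilton_jacobi_bellman_pde}) is linearised by the logarithmic (Hopf--Cole) transformation $\psi=\exp(-\sigma F)$, so that solvability of the nonlinear problem (\ref{eq:hamilton_jacobi_bellman_bvp}) can be read off from that of the linear Dirichlet problem (\ref{eq:feynman_kac_bvp}). Observe first that the principal part of (\ref{eq:hamilton_jacobi_bellman_pde}) is $\varepsilon\Delta$, uniformly elliptic with constant---hence arbitrarily smooth---coefficients, while the lower-order term $b(x,z,p):=-\nabla V(x)\cdot p-\varepsilon\sigma\vert p\vert^2+\kappa_r(x)=\mathcal{H}(x,z,p,0)$ is independent of $z$, smooth in $(z,p)$, and of class $C^2$ in $x$ by hypothesis (ii). I would then proceed in three steps---solve the linear problem, transform, verify---with uniqueness following by reversing the transformation.

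\emph{Step 1: the linear problem.} I would obtain a unique $\psi(\sigma,\cdot)\in C^2(\Omega)\cap C(\overline\Omega)$ solving (\ref{eq:feynman_kac_bvp}) either from Proposition~\ref{prop:feynman_kac_representation} or, more directly, from classical linear elliptic theory, using uniform ellipticity, the sign condition $c=-\sigma\kappa_r\le 0$ (which is where $\kappa_r\ge 0$ and $\sigma>0$ enter), interior Schauder estimates supplied by the $C^2$ regularity of the coefficients, and boundary barriers built from the exterior sphere condition in hypothesis (i). By the Feynman--Kac representation (\ref{eq:mgf}), $\psi(\sigma,x)=E^{0,x}[\exp(-\sigma W)]$ with $W$ as in (\ref{eq:work}); since $\tau\in L^1(\mu^{0,x})$ and $\kappa_r,\kappa_t$ are bounded on the compacta $\overline\Omega$ and $\partial\Omega$, one has $W<\infty$ almost surely, so $\exp(-\sigma W)$ is an almost surely positive random variable and $\psi(\sigma,x)>0$ for all $x\in\Omega$; the same bounds give $W\ge-\Vert\kappa_t\Vert_\infty$, so $\psi(\sigma,\cdot)$ is bounded above as well. (The strong maximum principle applied to (\ref{eq:feynman_kac_pde}) delivers the strict positivity without recourse to the probabilistic representation.) As $\psi(\sigma,\cdot)$ is continuous and strictly positive on the compact set $\overline\Omega$---its boundary values $\exp(-\sigma\kappa_t)$ being positive---it is bounded below by a positive constant.

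\emph{Steps 2 and 3: transform and verify.} Since $\log$ is smooth on $(0,\infty)$, the function $F(\sigma,\cdot):=-\sigma^{-1}\log\psi(\sigma,\cdot)$ is well defined and inherits the regularity $C^2(\Omega)\cap C(\overline\Omega)$. A direct chain-rule computation (using $\nabla\psi=-\sigma\psi\nabla F$ and $\Delta\psi=\sigma^2\psi\vert\nabla F\vert^2-\sigma\psi\Delta F$) shows that, in $\Omega$, $\psi$ solves (\ref{eq:feynman_kac_pde}) if and only if $F$ solves (\ref{eq:hamilton_jacobi_bellman_pde}), while on $\partial\Omega$ the condition (\ref{eq:feynman_kac_bc}) for $\psi$ is equivalent to $F=\kappa_t$; hence $F(\sigma,\cdot)$ is a classical solution of (\ref{eq:hamilton_jacobi_bellman_bvp}). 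For uniqueness, if $F_1,F_2\in C^2(\Omega)\cap C(\overline\Omega)$ both solve (\ref{eq:hamilton_jacobi_bellman_bvp}), then $\psi_i:=\exp(-\sigma F_i)$ are positive classical solutions of (\ref{eq:feynman_kac_bvp}) with identical data, so $\psi_1=\psi_2$ by the weak maximum principle (valid since $c=-\sigma\kappa_r\le 0$ and both functions are continuous on the compact $\overline\Omega$), and therefore $F_1=F_2$. Equivalently, one may apply the comparison principle for quasilinear elliptic operators \cite[Theorem~10.1]{gilbarg_trudinger} directly to (\ref{eq:hamilton_jacobi_bellman_pde}), which is admissible because $b$ is non-increasing in $z$ (indeed $\partial b/\partial z\equiv 0$) and the coefficients have the required regularity.

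The main difficulty is not a single hard estimate but the care needed at the boundary: one must make the positivity of $\psi$ \emph{quantitative}, i.e.\ obtain a uniform lower bound on $\overline\Omega$ rather than mere pointwise positivity in $\Omega$, so that $\log\psi\in C(\overline\Omega)$ and the datum $\kappa_t$ is attained; and one must check that the cited linear-existence result genuinely applies under hypotheses (i)--(ii), in particular that $\nabla V$ and $\kappa_r$ are controlled up to $\partial\Omega$ for the barrier construction that invokes the exterior sphere condition. If one prefers to avoid the transformation, the same conclusion can be drawn from a quasilinear Dirichlet existence theorem in \cite[Chapter~15]{gilbarg_trudinger}: the a priori bound on $\sup_\Omega\vert F\vert$ then comes from $\kappa_r\ge 0$ together with the bound on $\psi$, the boundary gradient estimate from exterior-sphere barriers, and the interior gradient estimate from the fact that $b$ has only \emph{natural} (at most quadratic) growth in $p$ with a sign favourable for the Bernstein argument; this route is more self-contained but considerably longer.
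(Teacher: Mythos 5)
Your proof is correct in outline, but it takes a genuinely different route from the paper. The paper does not linearise at all: it applies the fully nonlinear existence theorem \cite[Section 17.5, Theorem 17.17]{gilbarg_trudinger} directly to the Hamiltonian $\mathcal{H}$ of (\ref{eq:hamilton_jacobi_map}), and the whole proof consists of checking the structure conditions (\ref{eq:structural_conditions}) — uniform ellipticity with $\lambda=\Lambda=\varepsilon$, $\mathcal{H}_z=\mathcal{H}_{rx}=0$, concavity in $p$, and the bounds on $\mathcal{H}_p,\mathcal{H}_{px},\mathcal{H}_x,\mathcal{H}_{xx}$ coming from hypothesis (ii). You instead exploit the Hopf--Cole transformation $\psi=\exp(-\sigma F)$ to reduce (\ref{eq:hamilton_jacobi_bellman_bvp}) to the linear Feynman--Kac problem (\ref{eq:feynman_kac_bvp}), solve the latter by linear barrier/Schauder theory (or Proposition \ref{prop:feynman_kac_representation}), obtain strict positivity of $\psi$ on $\overline{\Omega}$ (automatic by compactness once the strong maximum principle gives $\psi>0$ in $\Omega$ and the boundary data $\exp(-\sigma\kappa_t)>0$), and transfer uniqueness back through the transformation via the weak maximum principle with $c=-\sigma\kappa_r\le 0$ (or the quasilinear comparison principle). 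Your chain-rule computation is the correct equivalence, and the argument is sound; note only that it leans on the standing nonnegativity $\kappa_r\ge 0$ and the standing growth bound on $\nabla V$ (so that $c\le 0$ and the drift is bounded on the bounded $\Omega$), and that the boundary-control issue you flag — that $\nabla V$ and $\kappa_r$ are only assumed $C^2$ in the open set $\Omega$ — is a genuine care point, though the paper's own verification of (\ref{eq:structural_conditions_02})--(\ref{eq:structural_conditions_03}) implicitly needs the same sort of uniform-in-$x$ control, so your route is not at a disadvantage there. What each approach buys: yours is more elementary (only linear elliptic theory plus maximum principles), gives positivity and the probabilistic representation as by-products, and is the same device the paper itself uses in \S\ref{sec:variational_analysis} to upgrade the regularity of $F(\sigma,\cdot)$ under the stronger Assumption \ref{asmp:regularity_asmp_dV_domain}; the paper's direct appeal to Theorem 17.17 avoids the positivity/lower-bound step entirely and extends to Hamiltonians that are concave in $p$ but not exactly linearisable by an exponential substitution.
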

\begin{proof}
 The proof follows from an application of \cite[Section 17.5, Theorem 17.17]{gilbarg_trudinger}. Therefore, we must verify that the assumptions of that result hold for the operator $\mathcal{H}$ defined in (\ref{eq:hamilton_jacobi_map}). The assumptions of the present theorem guarantee that $\mathcal{H}\in C^2(\Gamma)$, and that $\mathcal{H}$ is constant with respect to $z$, concave with respect to $p$, and linear with respect to $r$. Let $\mathcal{H}_{ij}(x,z,p,r):=\frac{\partial\mathcal{H}}{\partial r_{ij}}(x,z,p,r)$. It remains to verify that the structure conditions (see \cite[Section 17.5, Eq. (17.53)]{gilbarg_trudinger}) 
 \begin{subequations}\label{eq:structural_conditions}
\begin{align}
   0<\lambda \vert \xi\vert^2\le \sum_{i,j}\mathcal{H}_{ij}\xi_i\xi_j\le \Lambda \vert \xi\vert^2
   \label{eq:structural_conditions_01}
   \\
   \vert \mathcal{H}_p\vert,\vert\mathcal{H}_z\vert,\vert\mathcal{H}_{rx}\vert,\vert\mathcal{H}_{px}\vert,\vert\mathcal{H}_{zx}\vert\le \mu\lambda,
   \label{eq:structural_conditions_02}
   \\
   \vert\mathcal{H}_x\vert,\vert\mathcal{H}_{xx}\vert\le \mu\lambda(1+\vert p\vert+\vert r\vert),
   \label{eq:structural_conditions_03}
  \end{align} 
\end{subequations}
 are satisfied for nonzero $\xi\in\mathbb{R}^d$, arbitrary $(x,z,p,r)\in \Gamma$, a function $\lambda$ that is nonincreasing in $\vert z\vert$, and functions $\Lambda$ and $\mu$ that are nondecreasing in $\vert z\vert$. By (\ref{eq:hamilton_jacobi_map}), condition (\ref{eq:structural_conditions_01}) holds with $\lambda=\Lambda=\varepsilon$ since $\mathcal{H}_{ij}=\delta_{ij}\varepsilon$. Furthermore, $\mathcal{H}_z$ and $\mathcal{H}_{rx}$ vanish everywhere, and 
\begin{subequations}\label{eq:checking_structural_conditions_part_01}
 \begin{align}
  \frac{\partial\mathcal{H}}{\partial p_i}(x,z,p,r)&=-\frac{\partial V}{\partial x_i}(x)-2\varepsilon\sigma p_i,
  \\
  \frac{\partial^2 \mathcal{H}}{\partial p_i\partial x_j}(x,z,p,r)&=-\frac{\partial^2 V}{\partial x_i x_j}(x),
  \\
    \frac{\partial\mathcal{H}}{\partial x_i}(x,z,p,r)&=\frac{\partial }{\partial x_i}\left(\kappa_r(x)-\nabla V(x)\cdot p\right),
  \\
  \frac{\partial^2 \mathcal{H}}{\partial x_i\partial x_j}(x,z,p,r)&=\frac{\partial^2 }{\partial x_i x_j}\left(\kappa_r(x)-\nabla V(x)\cdot p\right).
 \end{align}
\end{subequations}
Since neither of the right-hand sides of (\ref{eq:checking_structural_conditions_part_01}) depends on $\vert z\vert$, we may define $\mu:\Gamma\to\mathbb{R}$ to be any function that satisfies (\ref{eq:structural_conditions_02})--(\ref{eq:structural_conditions_03}).
\end{proof}

Note that we may generalise Theorem \ref{thm:classical_solution_of_hamilton_jacobi_bellman_bvp} by weakening the requirement (ii), i.e. by only requiring that $\nabla V$ and $\kappa_r$ be twice weakly differentiable functions of the spatial variable $x$ and interpreting the structure conditions (\ref{eq:structural_conditions}) in the weak sense \cite[Chapter 17, Problem 17.4]{gilbarg_trudinger}.

Theorem \ref{thm:feynman_kac_representation_uniformly_elliptic_case_uniformly_holder} may be generalised to larger classes of domains that need not be bounded (see \cite[Section 6.3, Theorem 6.13]{gilbarg_trudinger} and the notes at the end of \cite[Chapter 6]{gilbarg_trudinger}). Note that even if the zeroth-order coefficient of the operator $L$ fails to satisfy the nonpositivity condition, solutions of (\ref{eq:linear_elliptic_bvp}) still belong to $C^{k+2,\alpha}(\overline{\Omega})$ for $k\in\mathbb{N}_0$, provided that the domain is $C^{k+2,\alpha}$, the boundary function belongs to $C^{k+2,\alpha}(\overline{\Omega})$, and the coefficients of the uniformly elliptic operator $L$ belong to $C^{k,\alpha}(\overline{\Omega})$. However, uniqueness of solutions no longer holds \cite[Section 6.4, Theorem 6.19]{gilbarg_trudinger}. 

\section{Variational analysis of the control functional}
\label{sec:variational_analysis}

In this section, we derive expressions for the first variation and the second variation of the control functional with respect to admissible perturbations of the feedback control function. In Lemma \ref{lem:finiteness_of_mgf_of_tau}, we adapt an argument due to Gilbarg and Trudinger, and use Fredholm theory in order to assert that the moment generating function of the first exit time defined in (\ref{eq:first_exit_time}) is finite over any open, bounded interval containing the origin. We then use Lemma \ref{lem:finiteness_of_mgf_of_tau} to show that Kazamaki's sufficient condition (\ref{eq:suffcon_Lq_boundedness_of_stopped_exp_MG}) for $L^q$-boundedness of exponential martingales holds for the case that $q>1$. Together with the Cauchy-Schwarz inequality and Lebesgue's dominated convergence theorem, $L^q$-boundedness yields the expressions for the first and second variations. The main result of this section, Theorem \ref{thm:uniform_lower_bound_for_W_sufficient_cond_for_convexity_undiscretised_functional}, shows the strict convexity of the control functional by proving that the second variation is a positive definite functional on the Banach space $C^{1,\alpha}(\overline{\Omega};\mathbb{R}^d)$ whenever the work path functional $W$ defined in (\ref{eq:work}) is bounded from below. Recall that $\sigma>0$ is a fixed parameter. All the results in this section, unless otherwise stated, shall be based upon the following
\begin{myasmp}\label{asmp:regularity_asmp_dV_domain}
Let $0\le \alpha\le 1$. It holds that
\begin{enumerate}
 \item[(i)] the domain $\Omega$ is bounded and of class $C^{2,\alpha}$, 
 \item[(ii)] the gradient of the potential $\nabla V\in C^{\alpha}(\overline{\Omega};\mathbb{R}^d)\cap W^2(\Omega;\mathbb{R}^d)$, 
 \item[(iii)] the running cost $\kappa_r\in C^{\alpha}(\overline{\Omega};[0,\infty))\cap W^2(\Omega;\mathbb{R})$, and 
 \item[(iv)] the terminal cost $\kappa_t\in C^{2,\alpha}(\overline{\Omega};\mathbb{R})$.
\end{enumerate}
\end{myasmp}

By Theorem \ref{thm:feynman_kac_representation_uniformly_elliptic_case_uniformly_holder}, there exists a unique solution to (\ref{eq:linear_elliptic_bvp}) in $C^{2,\alpha}(\overline{\Omega};\mathbb{R})$ with $f=0$ on $\Omega$ and $\varphi=\exp(-\sigma\kappa_t)$ on $\partial\Omega$. Proposition \ref{prop:feynman_kac_representation} guarantees that the unique solution is described by the Feynman-Kac formula (\ref{eq:mgf}). It follows from (\ref{eq:mgf}) and Jensen's inequality that $\exp(E^{0,x}[-\sigma W])\le \psi(\sigma,x)$; since finiteness of $\psi(\sigma,x)$ on $\overline{\Omega}$ implies finiteness of $E^{0,x}[-\sigma W]$, it follows that $\psi(\sigma,x)$ is bounded away from zero on $\overline{\Omega}$. On the other hand, the maximum principle implies that $\psi(\sigma,x)$ is bounded from above. Thus, given the relation (\ref{eq:cgf}), it follows that $F(\sigma,\cdot)\in C^{2,\alpha}(\overline{\Omega};\mathbb{R})$. By Theorem \ref{thm:classical_solution_of_hamilton_jacobi_bellman_bvp}, $F(\sigma,\cdot)$ is a classical solution of the HJB problem (\ref{eq:hamilton_jacobi_bellman_bvp}), and the optimal control belongs to $C^{1,\alpha}(\overline{\Omega};\mathbb{R}^d)$.

\subsection{$L^q$-bounded exponential martingales}
\label{ssec:Lq_bounded_exponential_martingales}
The first result below is of crucial importance:
\begin{mylem}\label{lem:finiteness_of_mgf_of_tau}
 Suppose that Assumption \ref{asmp:regularity_asmp_dV_domain} holds. If $u\in C^\alpha(\overline{\Omega})$, then for all $r>0$, there exists $\sigma_p>r$ such that $E^{u,x}\left[\exp(\sigma_p\tau)\right]$ is finite for all $x\in\overline{\Omega}$.
\end{mylem}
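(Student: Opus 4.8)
The plan is to realise $E^{u,x}[\exp(\sigma_p\tau)]$ as the value at $x$ of a classical solution to a linear elliptic Dirichlet problem, and then to use Fredholm theory (specifically, the fact that the spectrum of the relevant compact resolvent operator is discrete) to show that such a solution exists and stays bounded for $\sigma_p$ in a neighbourhood of any fixed $r>0$. Concretely, for a parameter $\sigma_p>0$ consider the boundary value problem
\begin{equation*}
 \mathcal{A}^u w_{\sigma_p}(x)+\sigma_p w_{\sigma_p}(x)=0 \quad x\in\Omega, \qquad w_{\sigma_p}(y)=1 \quad y\in\partial\Omega,
\end{equation*}
where $\mathcal{A}^u=\varepsilon\Delta+(u-\nabla V)\cdot\nabla$ is the uniformly elliptic generator from (\ref{eq:infinitesimal_generator_controlled_sde}). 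Under Assumption \ref{asmp:regularity_asmp_dV_domain} together with $u\in C^\alpha(\overline{\Omega})$, the coefficients of $\mathcal{A}^u+\sigma_p$ lie in $C^\alpha(\overline{\Omega})$ and the boundary datum is smooth, so whenever the homogeneous problem has only the trivial solution, Theorem \ref{thm:feynman_kac_representation_uniformly_elliptic_case_uniformly_holder} (in the form that dispenses with the sign condition on the zeroth-order coefficient, as noted in the remark following that theorem) gives a unique $w_{\sigma_p}\in C^{2,\alpha}(\overline{\Omega})$. The Feynman-Kac representation for this problem — obtained exactly as in Proposition \ref{prop:feynman_kac_representation}, using $\tau\in L^1(\mu^{u,x})$ and Dynkin's formula applied to $e^{\sigma_p(t\wedge\tau)}w_{\sigma_p}(X^u_{t\wedge\tau})$ — then identifies $w_{\sigma_p}(x)=E^{u,x}[\exp(\sigma_p\tau)]$, provided the expectation is finite; and conversely, positivity of $w_{\sigma_p}$ (from the boundary value $1$ and a maximum-principle argument, or directly from the probabilistic representation once finiteness is known) gives the bound $E^{u,x}[\exp(\sigma_p\tau)]\le\sup_{\overline{\Omega}}w_{\sigma_p}<\infty$.

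The heart of the matter is therefore to show that $\sigma_p$ can be chosen larger than the prescribed $r$ while avoiding the values at which the homogeneous problem $\mathcal{A}^u\varphi+\sigma_p\varphi=0$, $\varphi|_{\partial\Omega}=0$ has a nontrivial solution — i.e. while $-\sigma_p$ is not a Dirichlet eigenvalue of $\mathcal{A}^u$. Here I would invoke Fredholm theory: the solution operator $(\mathcal{A}^u-\mu)^{-1}$ (for $\mu$ large and negative, where solvability is classical by the sign condition) is a compact operator on $C(\overline{\Omega})$ or on $L^2(\Omega)$, so its spectrum is a discrete set of eigenvalues with no finite accumulation point; equivalently, the set of $\sigma_p$ for which the homogeneous problem has a nontrivial solution is discrete and has no accumulation point in $\mathbb{R}$. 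Consequently, in any neighbourhood $(r,r+\eta)$ of $r$ there are at most finitely many such exceptional values, so we may pick $\sigma_p\in(r,r+\eta)$ that is not exceptional; for this $\sigma_p$ the construction of the previous paragraph goes through and yields the claim. This is the step where I expect the main technical work, since one must set up the functional-analytic framework carefully — choosing the right function space on which the resolvent is compact, citing the Schauder estimates that make $(\mathcal{A}^u-\mu)^{-1}:C^\alpha(\overline{\Omega})\to C^{2,\alpha}(\overline{\Omega})$ bounded and hence compact into $C^\alpha$ by Arzelà–Ascoli, and invoking the Fredholm alternative for this operator (this is the Gilbarg–Trudinger argument alluded to in the section introduction, e.g.\ the treatment of eigenvalue problems via Fredholm theory in their Chapter 6 and Chapter 8).

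A subtlety worth flagging is the direction of the probabilistic identification: a priori one only knows $w_{\sigma_p}\in C^{2,\alpha}(\overline{\Omega})$ solves the PDE, and Dynkin's formula gives $w_{\sigma_p}(x)=E^{u,x}[e^{\sigma_p(t\wedge\tau)}w_{\sigma_p}(X^u_{t\wedge\tau})]$ for each finite $t$; to pass to the limit $t\to\infty$ and conclude $w_{\sigma_p}(x)=E^{u,x}[e^{\sigma_p\tau}]$ one needs a uniform integrability or monotone/dominated convergence argument. One clean route: the positivity of $w_{\sigma_p}$ forces $w_{\sigma_p}(x)\ge c\,E^{u,x}[e^{\sigma_p(t\wedge\tau)}]$ for $c=\min_{\overline{\Omega}}w_{\sigma_p}>0$ (using that $w_{\sigma_p}$ is bounded below away from zero, which follows from continuity on the compact set $\overline{\Omega}$ together with $w_{\sigma_p}>0$, itself a consequence of the strong maximum principle applied to $\mathcal{A}^u+\sigma_p$ with nonnegative boundary data), and then monotone convergence in $t$ yields $E^{u,x}[e^{\sigma_p\tau}]\le c^{-1}\sup_{\overline{\Omega}}w_{\sigma_p}<\infty$. (That $w_{\sigma_p}$ is strictly positive may require that $\sigma_p$ lie below the principal Dirichlet eigenvalue $\lambda_1$ of $-\mathcal{A}^u$; if $\lambda_1>r$ one simply takes $\sigma_p\in(r,\lambda_1)$, and since $\lambda_1>0$ this is the generic situation — one should state the result accordingly, or else argue positivity directly from the probabilistic side once finiteness is in hand.) Once finiteness is established for one such $\sigma_p>r$, it holds a fortiori for every smaller exponent, which is all that is needed downstream for verifying Kazamaki's condition (\ref{eq:suffcon_Lq_boundedness_of_stopped_exp_MG}).
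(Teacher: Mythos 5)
Your proposal takes essentially the same route as the paper: realise $E^{u,x}[\exp(\sigma_p\tau)]$ as the $C^{2,\alpha}(\overline{\Omega})$ solution of $\mathcal{A}^u w+\sigma_p w=0$ in $\Omega$, $w=1$ on $\partial\Omega$, use Schauder theory and the Fredholm alternative to see that the set of exceptional values of $\sigma_p$ (those for which the homogeneous problem has nontrivial solutions) is discrete, pick a non-exceptional $\sigma_p>r$, and identify the solution probabilistically. The subtlety you flag at the identification step is genuine and is exactly where the paper is terse (it simply asserts the representation ``by It\^{o}'s formula'' after invoking the Fredholm alternative): your passage to the limit in Dynkin's formula requires strict positivity of $w_{\sigma_p}$, which restricts $\sigma_p$ to lie below the principal Dirichlet eigenvalue of $-\mathcal{A}^u$, so your more careful version makes explicit a restriction that the paper's argument for the conclusion ``for all $r>0$'' leaves unexamined.
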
\begin{proof}
 We prove the statement using Fredholm theory, following the argument outlined at the end of \cite[Section 6.3]{gilbarg_trudinger}. Observe that, for an arbitrary $\sigma_n<0$, $E^{u,x}[\exp(\sigma_n\tau)]$ uniquely solves the Feynman-Kac boundary value problem (\ref{eq:feynman_kac_bvp}) with the choice of $-\sigma=\sigma_n<0$, $\kappa_r=1$ and $\kappa_t=0$. Since $\kappa_r$ and $\kappa_t$ are constant, they may be extended to all of $\overline{\Omega}$. Therefore, we may conclude from Theorem \ref{thm:feynman_kac_representation_uniformly_elliptic_case_uniformly_holder} that $E^{u,x}[\exp(\sigma_n\tau)]$ belongs to $C^{2,\alpha}(\overline{\Omega};\mathbb{R})$. We rewrite this Feynman-Kac boundary value problem as an elliptic boundary value problem of the form (\ref{eq:linear_elliptic_bvp}):
 \begin{subequations}\label{eq:elliptic_partial_differential_operator_for_stopping_time}
 \begin{align}
  L^{u,\sigma_n}v(x)&:=\mathcal{A}^u v(x)+\sigma_n v(x)=0,\hskip1ex x\in \Omega,
  \label{eq:elliptic_partial_differential_operator_for_stopping_time_pde}
  \\
  v(y)&=1,\hskip1ex y\in\partial\Omega.
  \label{eq:elliptic_partial_differential_operator_for_stopping_time_bc}
  \end{align}
 \end{subequations}
Since all the coefficients of $L^{u,\sigma_n}$ belong to $C^{\alpha}(\overline{\Omega})$, the operator $L$ defines a closed map from coefficients that belong to $C^\alpha(\overline{\Omega})$ to solutions that belong to $C^{2,\alpha}(\overline{\Omega})$. By \cite[Theorem 5.5]{gilbarg_trudinger}, it follows that there is a countable, discrete set $\Sigma\subset\mathbb{R}$, such that if $\sigma\in \Sigma$, then the homogeneous problem 
 \begin{subequations}\label{eq:homogeneous_problem_01}
 \begin{align}
  L^{u,\sigma_n-\sigma} v(x)&=L^{u,\sigma_n}v(x)-\sigma v(x)=0, \hskip1ex x\in\Omega,
  \label{eq:homogeneous_problem_01_pde}
  \\
  v(y)&=0,\hskip1ex y\in\partial\Omega
  \label{eq:homogeneous_problem_01_bc}
 \end{align}
 \end{subequations}
has nontrivial solutions. Therefore, for $\sigma\notin\Sigma$, the homogeneous problem (\ref{eq:homogeneous_problem_01}) has only the trivial solution $v\equiv 0$. By the Fredholm alternative (see \cite[Section 6.3, Theorem 6.15]{gilbarg_trudinger}), it follows that the corresponding inhomogeneous problem obtained by replacing (\ref{eq:homogeneous_problem_01_bc}) with (\ref{eq:elliptic_partial_differential_operator_for_stopping_time_bc}) has a unique solution in $C^{2,\alpha}(\overline{\Omega})$. By the extreme value theorem and compactness of $\overline{\Omega}$, the unique solution is finite on $\overline{\Omega}$. By It\^{o}'s formula, the unique solution admits the representation $v(x)=E^{u,x}\left[\exp((\sigma_n-\sigma)\tau)\right]$. Now let $r>0$ be arbitrary. Since $\Sigma$ is discrete, there exists a $\sigma\in(-\infty,0)\cap \Sigma^\complement$ such that $\sigma_p:=\sigma_n-\sigma>r$, and the conclusion follows.
\end{proof}
\begin{mycor}\label{cor:finiteness_of_mgf_of_tau_over_interval}
  For all $x\in\overline{\Omega}$, $\sigma_n<0$ and $r>0$, there exists a closed interval $[\sigma_n,\sigma_p]$ with $\sigma_p>r$ such that the moment generating function defined by $E^{u,x}\left[\exp(\sigma\tau)\right]$ is finite for all $\sigma\in [\sigma_n,\sigma_p]$.
\end{mycor}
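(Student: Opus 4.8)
The plan is to obtain the corollary as an immediate consequence of Lemma~\ref{lem:finiteness_of_mgf_of_tau} combined with the monotonicity (and convexity) of the exponential function. Fix $x\in\overline{\Omega}$, $\sigma_n<0$, and $r>0$. Applying Lemma~\ref{lem:finiteness_of_mgf_of_tau} to this $r$ yields a number $\sigma_p>r$ such that $E^{u,x}[\exp(\sigma_p\tau)]<\infty$; in fact the lemma supplies this bound uniformly over $\overline{\Omega}$, so $\sigma_p$ may even be taken independent of $x$. This $\sigma_p$ will serve as the right endpoint of the interval, and the given $\sigma_n$ as the left endpoint.

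The remaining step is to propagate finiteness from the endpoints $\sigma_n,\sigma_p$ to the whole interval $[\sigma_n,\sigma_p]$. Here I would use the pointwise bound $\exp(\sigma\tau)\le 1+\exp(\sigma_p\tau)$, valid for every $\sigma\le\sigma_p$: indeed $\tau\ge 0$ by the definition~(\ref{eq:first_exit_time}), so $\exp(\sigma\tau)\le 1$ when $\sigma\le 0$, while $\exp(\sigma\tau)\le\exp(\sigma_p\tau)$ by monotonicity when $0<\sigma\le\sigma_p$. Taking $E^{u,x}[\,\cdot\,]$ of both sides and invoking Lemma~\ref{lem:finiteness_of_mgf_of_tau} gives $E^{u,x}[\exp(\sigma\tau)]\le 1+E^{u,x}[\exp(\sigma_p\tau)]<\infty$ for all $\sigma\in[\sigma_n,\sigma_p]$, which is the assertion. (An equivalent route: for fixed $\tau$ the map $\sigma\mapsto\exp(\sigma\tau)$ is convex, hence on $[\sigma_n,\sigma_p]$ it lies below the chord through its endpoint values, so $\exp(\sigma\tau)\le\exp(\sigma_n\tau)+\exp(\sigma_p\tau)$, and finiteness at $\sigma_n$ is trivial because $\exp(\sigma_n\tau)\le 1$.)

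I do not anticipate any genuine obstacle: all the analytic content — in particular the use of Fredholm theory to secure a positive $\sigma_p$ beyond an arbitrary threshold — has already been discharged in Lemma~\ref{lem:finiteness_of_mgf_of_tau}, and the corollary is essentially a restatement packaging the endpoint bounds into an interval. The only point meriting a line of care is recording the nonnegativity of $\tau$ (and, if one prefers the chord argument, that $\tau<\infty$ holds $\mu^{u,x}$-almost surely, which follows from $\tau\in L^1(\mu^{u,x})$ by \cite[Section 5.7, Lemma 7.4]{karatzas_shreve}) so that the elementary exponential estimate above is justified.
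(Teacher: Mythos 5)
Your proposal is correct and follows essentially the same route as the paper: both reduce the interval to its endpoints via Lemma~\ref{lem:finiteness_of_mgf_of_tau} together with an elementary exponential estimate using $\tau\ge 0$, the paper choosing the convexity chord bound $\exp(\sigma\tau)\le\theta\exp(\sigma_n\tau)+(1-\theta)\exp(\sigma_p\tau)$, which you also record as your alternative route. Your primary bound $\exp(\sigma\tau)\le 1+\exp(\sigma_p\tau)$ is a trivially more elementary variant of the same idea, so nothing further is needed.
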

\begin{proof}
Let $x\in\overline{\Omega}$ be arbitrary. For any $\sigma_n<0$, $E^{u,x}[\exp(\sigma_n\tau)]\le 1$ by monotonicity of the integral. By Lemma \ref{lem:finiteness_of_mgf_of_tau}, there exists a $\sigma_p>r$ such that $E^{u,x}[\exp(\sigma_p\tau)]$ is finite. For arbitrary $\sigma_n<\sigma<\sigma_p$, there exists a $0<\theta<1$ such that $\sigma=\theta\sigma_n+(1-\theta)\sigma_p$. By the strict convexity of the exponential, we have
 \begin{equation*}
  \exp(\sigma\tau)<\theta \exp(\sigma_n\tau)+(1-\theta)\exp(\sigma_p\tau).
 \end{equation*}
Taking expectations with respect to $\mu^{u,x}$ yields the desired conclusion.
\end{proof}

\begin{mythm}\label{thm:suffcon_Lq_boundedness_of_stopped_exp_MG}
 Let $\tau$ be an arbitrary stopping time, and let $\sigma>0$ be such that $E\left[\exp(\sigma\tau)\right]$ is finite. If for some $1<p<\infty$, it holds that $\varphi\in L^\infty(\Omega)$ satisfies
 \begin{equation}\label{eq:suffcon_Lq_boundedness_of_stopped_exp_MG}
  \Vert \varphi\Vert^2_{L^\infty(\Omega)}\le 2\sigma\frac{(\sqrt{p}-1)^2}{p},
 \end{equation}
then the stopped exponential martingale $\mathcal{E}((M^\varphi)^\tau)$ is an $L^q$-bounded martingale for $q^{-1}=1-p^{-1}$ and $E[\mathcal{E}(M^\varphi)^q_\tau]$ is finite.
\end{mythm}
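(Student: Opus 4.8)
The plan is to prove the uniform bound $\sup_{t\ge 0}E\bigl[\mathcal{E}((M^\varphi)^\tau)_t^{\,q}\bigr]<\infty$ by a Hölder-splitting argument in the spirit of Kazamaki, with one free parameter that is optimised at the end, and then to deduce from this bound both the genuine martingale property and the finiteness of $E[\mathcal{E}(M^\varphi)_\tau^{\,q}]$. Throughout I write $M:=(M^\varphi)^\tau$, a continuous local martingale; since $\varphi\in L^\infty(\Omega)$ its quadratic variation obeys the pathwise bound $\langle M\rangle_t=\langle M^\varphi\rangle_{t\wedge\tau}\le\|\varphi\|_{L^\infty(\Omega)}^2\,(t\wedge\tau)\le\|\varphi\|_{L^\infty(\Omega)}^2\,\tau$, and it is precisely this deterministic-in-$\omega$ bound that lets the hypothesis on $E[\exp(\sigma\tau)]$ enter.

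The key step is the elementary identity, valid for any $\lambda>1$,
\[
\mathcal{E}(M)_t^{\,q}=\exp\!\Bigl(qM_t-\tfrac{q^2\lambda}{2}\langle M\rangle_t\Bigr)\exp\!\Bigl(\tfrac{q^2\lambda-q}{2}\langle M\rangle_t\Bigr),
\]
combined with Hölder's inequality for the conjugate exponents $\lambda$ and $\lambda'=\lambda/(\lambda-1)$. The first factor raised to the power $\lambda$ equals $\mathcal{E}(q\lambda M)_t$, the stochastic exponential of the continuous local martingale $q\lambda M$; as a nonnegative local martingale it is a supermartingale, so its expectation is at most $1$. Hence
\[
E\bigl[\mathcal{E}(M)_t^{\,q}\bigr]\le E\!\Bigl[\exp\!\Bigl(\tfrac{\lambda'(q^2\lambda-q)}{2}\langle M\rangle_t\Bigr)\Bigr]^{1/\lambda'}\le E\bigl[\exp\bigl(c(\lambda)\,\|\varphi\|_{L^\infty(\Omega)}^2\,\tau\bigr)\bigr]^{1/\lambda'},
\]
where $c(\lambda):=\tfrac12\lambda'q(q\lambda-1)>0$, using that $q^2\lambda-q>0$ so that the exponent is increasing in $\langle M\rangle_t$. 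I would then minimise $\lambda\mapsto\lambda'q(q\lambda-1)$ over $\lambda>1$: the critical point is $\lambda^\ast=1+p^{-1/2}$ (using $1-q^{-1}=p^{-1}$), and a short computation gives $c(\lambda^\ast)=\tfrac12\,p/(\sqrt p-1)^2$ and $(\lambda^\ast)'=\sqrt p+1$. Therefore the hypothesis $\|\varphi\|_{L^\infty(\Omega)}^2\le 2\sigma(\sqrt p-1)^2/p$ is exactly the inequality $c(\lambda^\ast)\|\varphi\|_{L^\infty(\Omega)}^2\le\sigma$, whence $E[\exp(c(\lambda^\ast)\|\varphi\|_{L^\infty(\Omega)}^2\tau)]\le E[\exp(\sigma\tau)]<\infty$ by assumption, and so $\sup_{t\ge 0}E[\mathcal{E}(M)_t^{\,q}]\le E[\exp(\sigma\tau)]^{1/(\sqrt p+1)}<\infty$.

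To conclude, note that $q>1$ together with Doob's $L^q$ maximal inequality gives $\sup_{t\ge 0}\mathcal{E}(M)_t\in L^q\subset L^1$, so the nonnegative local martingale $\mathcal{E}(M)$ is dominated by an integrable random variable; removing the localising sequence by dominated convergence then shows that $\mathcal{E}(M)$ is a true martingale, and it is $L^q$-bounded by the previous paragraph. Finally, $E[\exp(\sigma\tau)]<\infty$ with $\sigma>0$ forces $\tau<\infty$ almost surely, so $\mathcal{E}(M)_t=\mathcal{E}(M^\varphi)_{t\wedge\tau}\to\mathcal{E}(M^\varphi)_\tau$ a.s. as $t\to\infty$, and Fatou's lemma yields $E[\mathcal{E}(M^\varphi)_\tau^{\,q}]\le\liminf_{t\to\infty}E[\mathcal{E}(M)_t^{\,q}]<\infty$.

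The step I expect to be the main obstacle is bookkeeping rather than conceptual: carrying out the one-variable optimisation so that it reproduces \emph{exactly} the constant $2\sigma(\sqrt p-1)^2/p$ appearing in the hypothesis, and keeping the normalising constants built into the definition of $M^\varphi$ consistent through the Hölder split. A minor secondary point is the passage from the $L^q$ bound on the family $\{\mathcal{E}(M)_t\}_{t\ge 0}$ to the genuine martingale property and to the finiteness of $E[\mathcal{E}(M^\varphi)_\tau^{\,q}]$, which rests only on $\tau<\infty$ a.s. and on dominated-convergence and Fatou arguments.
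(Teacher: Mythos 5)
Your proof is correct in substance and recovers the exact constant, but it takes a genuinely different route from the paper. The paper treats Kazamaki's criterion as a black box: it quotes the result that $\mathcal{E}(M)$ is an $L^q$-bounded martingale whenever $\sup_T E[\exp(\tfrac12\tfrac{\sqrt p}{\sqrt p-1}M_T)]<\infty$ over bounded stopping times $T$, and converts this into a condition on the quadratic variation via the Cauchy--Schwarz step $E[\exp(\tfrac12 M_T)]\le E[\exp(\tfrac12\langle M\rangle_T)]^{1/2}$, which produces the constant $p/(2(\sqrt p-1)^2)$; the hypotheses then enter through $\langle (M^\varphi)^\tau\rangle_T\le\Vert\varphi\Vert^2_{L^\infty(\Omega)}(\tau\wedge T)$ and the finiteness of $E[\exp(\sigma\tau)]$. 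You instead prove the needed quantitative Novikov-type criterion from scratch: the H\"older split of $\mathcal{E}(M)_t^q$ with free exponent $\lambda$, the supermartingale bound $E[\mathcal{E}(q\lambda M)_t]\le 1$, and the optimisation $\lambda^\ast=1+p^{-1/2}$, $c(\lambda^\ast)=p/(2(\sqrt p-1)^2)$, $(\lambda^\ast)'=\sqrt p+1$ all check out and reproduce the hypothesis exactly. What your route buys is self-containedness and the explicit bound $\sup_t E[\mathcal{E}(M)_t^q]\le E[\exp(\sigma\tau)]^{1/(\sqrt p+1)}$; what the paper's route buys is brevity, since the martingale property and $L^q$-boundedness come packaged in the cited theorem. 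One small repair is needed in your last step: Doob's $L^q$ maximal inequality is stated for (sub)martingales, whereas at that point $\mathcal{E}(M)$ is only known to be a nonnegative local martingale, so invoking Doob to obtain the dominating variable that then yields the martingale property is slightly circular. The fix is already contained in your estimate: the H\"older bound holds verbatim with $t$ replaced by any bounded stopping time $T$ (since $E[\mathcal{E}(q\lambda M)_T]\le 1$), so the family $\{\mathcal{E}(M)_T : T\ \text{bounded}\}$ is bounded in $L^q$, hence uniformly integrable; thus $\mathcal{E}(M)$ is of class (DL) and therefore a true martingale (alternatively, apply Doob along a localising sequence and pass to the limit by monotone convergence). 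The Fatou argument for $E[\mathcal{E}(M^\varphi)_\tau^q]<\infty$ is fine, and your dropping of the factor $1/(2\varepsilon)$ in $\langle M^\varphi\rangle$ matches the paper's own bookkeeping, so it only shifts a constant.
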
\begin{proof}
Let $p$ and $q$ be a pair of H\"{o}lder conjugate exponents. Recall \cite[Section 1.2, Theorem 1.5]{kazamaki}: for a given continuous local martingale $M$, if
\begin{equation}\label{eq:kazamakis_condition_for_Lq_boundedness_of_exp_MG}
 \sup\left\{E\left[\exp\left(\frac{1}{2}\frac{\sqrt{p}}{\sqrt{p}-1}M_T\right)\right]\ \biggr\vert\ T\text{ a bounded stopping time }\right\}<\infty,
\end{equation}
then the exponential martingale $\mathcal{E}(M)$ is $L^q$-bounded. Let $T$ be an arbitrary bounded stopping time. Since it follows from (\ref{eq:exponential_martingale}) that
\begin{equation*}
 \exp\left(\frac{1}{2} M_T\right)=\left(\mathcal{E}(M)_T\right)^{1/2}\exp\left(\frac{1}{4}\langle M\rangle_T\right),
\end{equation*}
it holds by the Cauchy-Schwarz inequality that
\begin{equation}\label{eq:cauchy_schwarz_inequality_application_to_exp_of_MG}
 E\left[\exp\left(\frac{1}{2}M_T\right)\right]\le E\left[\exp\left(\frac{1}{2}\langle M\rangle_T\right)\right]^{1/2}.
\end{equation}
Combining the inequality (\ref{eq:cauchy_schwarz_inequality_application_to_exp_of_MG}) with (\ref{eq:kazamakis_condition_for_Lq_boundedness_of_exp_MG}), we therefore obtain that 
\begin{equation}\label{eq:kazamakis_condition_for_Lq_boundedness_of_exp_MG_in_terms_of_QV}
 \sup\left\{E\left[\exp\left(\frac{p}{2(\sqrt{p}-1)^2}\langle M\rangle_T\right)\right]\ \biggr\vert\ T\text{ a bounded stopping time }\right\}<\infty
\end{equation}
is a sufficient condition for the $L^q$-boundedness of $\mathcal{E}(M)$. By the assumption that $\varphi\in L^\infty(\Omega)$, it holds that
\begin{equation*}
\langle (M^\varphi)^\tau\rangle_T\le \Vert \varphi\Vert^2_{L^\infty(\Omega)}\left(\tau\wedge T\right).
\end{equation*}
Since (\ref{eq:suffcon_Lq_boundedness_of_stopped_exp_MG}) is equivalent to
\begin{equation*}
 \frac{p}{2(\sqrt{p}-1)^2}\Vert \varphi\Vert^2_{L^\infty(\Omega)}\le\sigma,
\end{equation*}
and since $E[\exp(\sigma\tau)]$ is finite, it follows that (\ref{eq:kazamakis_condition_for_Lq_boundedness_of_exp_MG_in_terms_of_QV}) is satisfied, by the monotonicity of the exponential and of the integral. Therefore $\mathcal{E}(M^\varphi)^\tau$ is $L^q$-bounded. By the definition of $L^q$-boundedness, it follows that $\mathcal{E}(M^\varphi)_\tau\in L^q(\mu^{u,x})$, as desired.
\end{proof}

The next result combines Lemma \ref{lem:finiteness_of_mgf_of_tau} and Theorem \ref{thm:suffcon_Lq_boundedness_of_stopped_exp_MG} to show that, for any $\varphi\in L^\infty(\Omega;\mathbb{R}^d)$, it holds that $\mathcal{E}((M^\varphi)^\tau)$ is $L^q(\mu^{u,x})$-bounded for any $q>1$.
\begin{mycor}\label{cor:bounded_perturbations_yield_Lq_bounded_exp_MGs}
 Let $\tau$ be defined as in (\ref{eq:first_exit_time}) and suppose that Assumption \ref{asmp:regularity_asmp_dV_domain} holds. If $u\in C^\alpha(\overline{\Omega})$ and $\varphi\in L^\infty(\Omega)$, then 
 \begin{enumerate}
  \item[(i)] $\mathcal{E}(M^{\varphi})^\tau$ is $L^q(\mu^{u,x})$-bounded for all $q>1$, and
  \item[(ii)] for any $1\le r<\infty$, $E^{u,x}[\langle M^\varphi\rangle_\tau^r]$ is finite, and for any $0<s<\infty$, $E^{u,x}[\sup_{0\le t\le \tau}\vert M^\varphi_t\vert^s]$ is finite.
 \end{enumerate}
\end{mycor}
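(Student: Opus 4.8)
The plan is to combine Lemma~\ref{lem:finiteness_of_mgf_of_tau}, which supplies exponential integrability of $\tau$ with an arbitrarily large exponent, with Theorem~\ref{thm:suffcon_Lq_boundedness_of_stopped_exp_MG}, which converts this into $L^q$-boundedness via Kazamaki's criterion. The substantive analytic work has already been carried out in those two results, so here it only remains to choose the parameters correctly and to translate the conclusions.

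For part (i), fix $q>1$ and let $1<p<\infty$ be its H\"older conjugate, $p^{-1}=1-q^{-1}$. If $\varphi\equiv 0$ in $L^\infty(\Omega)$ there is nothing to prove, so assume $\Vert\varphi\Vert_{L^\infty(\Omega)}>0$, where $\Vert\varphi\Vert_{L^\infty(\Omega)}$ denotes the essential supremum of the Euclidean norm $\vert\varphi\vert$. Put $r:=\tfrac{p}{2(\sqrt p-1)^2}\Vert\varphi\Vert_{L^\infty(\Omega)}^2>0$. Since $u\in C^\alpha(\overline\Omega)$ and Assumption~\ref{asmp:regularity_asmp_dV_domain} holds, Lemma~\ref{lem:finiteness_of_mgf_of_tau} yields a $\sigma_p>r$ such that $E^{u,x}[\exp(\sigma_p\tau)]$ is finite for every $x\in\overline\Omega$. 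By the choice of $r$, the inequality $\Vert\varphi\Vert_{L^\infty(\Omega)}^2\le 2\sigma_p(\sqrt p-1)^2/p$ holds, so $\varphi$ and $\sigma_p$ satisfy hypothesis (\ref{eq:suffcon_Lq_boundedness_of_stopped_exp_MG}) of Theorem~\ref{thm:suffcon_Lq_boundedness_of_stopped_exp_MG} with $\sigma=\sigma_p$. That theorem then gives that $\mathcal{E}((M^\varphi)^\tau)$ is an $L^q(\mu^{u,x})$-bounded martingale and that $E^{u,x}[\mathcal{E}(M^\varphi)_\tau^q]$ is finite. Since $q>1$ was arbitrary, (i) follows.

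For part (ii), first note that the exponential integrability of $\tau$ from Lemma~\ref{lem:finiteness_of_mgf_of_tau} forces all polynomial moments of $\tau$ to be finite: fixing any $\sigma_p>0$ with $E^{u,x}[\exp(\sigma_p\tau)]<\infty$, the elementary bound $t^\rho\le C_\rho\exp(\sigma_p t)$ for $t\ge 0$ (with $C_\rho$ depending on $\rho$ and $\sigma_p$) gives $E^{u,x}[\tau^\rho]\le C_\rho E^{u,x}[\exp(\sigma_p\tau)]<\infty$ for every $\rho>0$. Since $\varphi\in L^\infty(\Omega)$, we have the pathwise bound
\begin{equation*}
 \langle M^\varphi\rangle_\tau=\frac{1}{2\varepsilon}\int_0^\tau\vert\varphi(X^u_s)\vert^2\,ds\le\frac{\Vert\varphi\Vert_{L^\infty(\Omega)}^2}{2\varepsilon}\,\tau ,
\end{equation*}
whence $E^{u,x}[\langle M^\varphi\rangle_\tau^r]\le(2\varepsilon)^{-r}\Vert\varphi\Vert_{L^\infty(\Omega)}^{2r}E^{u,x}[\tau^r]<\infty$ for every $1\le r<\infty$; this is the first claim in (ii). For the second claim, apply the Burkholder--Davis--Gundy inequality to the stopped continuous local martingale $(M^\varphi)^\tau$: for each $0<s<\infty$ there is a constant $c_s$ (depending only on $s$) with
\begin{equation*}
 E^{u,x}\Bigl[\sup_{0\le t\le\tau}\vert M^\varphi_t\vert^s\Bigr]\le c_s\,E^{u,x}\bigl[\langle M^\varphi\rangle_\tau^{s/2}\bigr]\le c_s\,(2\varepsilon)^{-s/2}\Vert\varphi\Vert_{L^\infty(\Omega)}^{s}\,E^{u,x}[\tau^{s/2}]<\infty .
\end{equation*}
This proves (ii).

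I expect no genuine obstacle here; the only points that require a modicum of care are the passage from the exponential moment of $\tau$ furnished by Lemma~\ref{lem:finiteness_of_mgf_of_tau} to its polynomial moments, the invocation of the Burkholder--Davis--Gundy inequality in the full exponent range $0<s<\infty$ (in particular for $0<s<1$), and the bookkeeping of constants when matching $\Vert\varphi\Vert_{L^\infty(\Omega)}$ against the threshold in (\ref{eq:suffcon_Lq_boundedness_of_stopped_exp_MG}) for $\mathbb{R}^d$-valued $\varphi$.
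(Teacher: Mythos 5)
Your proposal is correct and follows essentially the same route as the paper: part (i) is identical (choose $r_p=\tfrac{p}{2(\sqrt p-1)^2}\Vert\varphi\Vert_{L^\infty(\Omega)}^2$, invoke Lemma \ref{lem:finiteness_of_mgf_of_tau} and then Theorem \ref{thm:suffcon_Lq_boundedness_of_stopped_exp_MG}), and part (ii) likewise rests on exponential integrability of $\tau$ plus the Burkholder--Davis--Gundy inequalities. The only cosmetic difference is that in (ii) you bound $\langle M^\varphi\rangle_\tau$ pathwise by a constant multiple of $\tau$ and use polynomial moments of $\tau$, whereas the paper extracts the moments of $\langle M^\varphi\rangle_\tau$ from the exponential-moment condition (\ref{eq:kazamakis_condition_for_Lq_boundedness_of_exp_MG_in_terms_of_QV}) together with Jensen's inequality; both arguments are equivalent in substance.
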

\begin{proof}
 We first prove statement (i). Let $p$ be the H\"{o}lder conjugate of $q$, and define 
 \begin{equation*}
  r_p:=\frac{p}{2(\sqrt{p}-1)^2}\Vert \varphi\Vert_{L^\infty(\Omega)}^2.
 \end{equation*}
 Since $u\in C^\alpha(\overline{\Omega})$, there exists a $\sigma_p>r_p$ so that $E^{u,x}\left[\exp(\sigma_p\tau)\right]$ is finite for all $x\in\overline{\Omega}$, by Lemma \ref{lem:finiteness_of_mgf_of_tau}. Hence, $\varphi$ satisfies (\ref{eq:suffcon_Lq_boundedness_of_stopped_exp_MG}), and the conclusion follows by Theorem \ref{thm:suffcon_Lq_boundedness_of_stopped_exp_MG}.
 
 To prove statement (ii), observe that (\ref{eq:kazamakis_condition_for_Lq_boundedness_of_exp_MG_in_terms_of_QV}) implies that $E[\langle M^{\varphi}\rangle^r_\tau]$ is finite for all $r\in\mathbb{N}$, and hence for all $1\le r<\infty$, by Jensen's inequality. The second assertion of statement (ii) follows from this observation and the Burkholder-Davis-Gundy inequalities \cite[Chapter IV, \S 4, Theorem 4.1]{revuz_yor}. 
\end{proof}

The next result shows that Assumption \ref{asmp:regularity_asmp_dV_domain} suffices to guarantee that, for bounded controls $u$, the quantity $E^{u,x}\left[\exp(-\sigma W)\right]$ is a finite for all $x\in\overline{\Omega}$.
\begin{mylem}\label{lem:even_moments_of_W_wrt_mu_u_x_are_finite}
 Let $W$ be defined as in (\ref{eq:work}) and suppose that Assumption \ref{asmp:regularity_asmp_dV_domain} holds. If $u\in L^\infty(\Omega)$, then $E^{u,x}\left[\exp(-\sigma W)\right]$ is finite, and $W\in L^{n}(\mu^{u,x})$ for all $n\in\mathbb{N}$.
\end{mylem}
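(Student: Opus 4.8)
The plan is to observe that the structure of $W$ in (\ref{eq:work}) forces a one-sided bound that makes the first claim immediate, and then to dominate $|W|$ by an affine function of the exit time $\tau$, thereby reducing the second claim to finiteness of all polynomial moments of $\tau$. For the first claim, note that $\kappa_r\ge 0$ on $\overline{\Omega}$ makes the running-cost integral in (\ref{eq:work}) nonnegative, while $\kappa_t\in C^{2,\alpha}(\overline{\Omega};\mathbb{R})\subset C(\overline{\Omega};\mathbb{R})$ is bounded on the compact set $\overline{\Omega}$; hence $W(X^u)\ge -\Vert\kappa_t\Vert_{L^\infty(\overline{\Omega})}$ on the event $\{\tau(X^u)<\infty\}$, which has full $\mu^{u,x}$-measure since $\tau\in L^1(\mu^{u,x})$ --- this holds because the diffusion matrix is a strictly positive multiple of the identity and the drift $u-\nabla V$ is bounded on $\Omega$ (as $u\in L^\infty(\Omega)$ and $\nabla V\in C^\alpha(\overline{\Omega};\mathbb{R}^d)$), by \cite[Section 5.7, Lemma 7.4]{karatzas_shreve}. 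Consequently $\exp(-\sigma W)\le \exp(\sigma\Vert\kappa_t\Vert_{L^\infty(\overline{\Omega})})$ $\mu^{u,x}$-almost surely, so $E^{u,x}[\exp(-\sigma W)]$ is finite.

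For the second claim, the boundedness of $\kappa_r$ and $\kappa_t$ on $\overline{\Omega}$ gives $|W(X^u)|\le \Vert\kappa_r\Vert_{L^\infty(\overline{\Omega})}\,\tau(X^u)+\Vert\kappa_t\Vert_{L^\infty(\overline{\Omega})}$, so after raising to the $n$-th power and using the triangle inequality it suffices to show $E^{u,x}[\tau^n]<\infty$ for every $n\in\mathbb{N}$. When $u\in C^\alpha(\overline{\Omega})$ this is immediate from Lemma \ref{lem:finiteness_of_mgf_of_tau}, which supplies $\sigma_p>0$ with $E^{u,x}[\exp(\sigma_p\tau)]<\infty$ for all $x\in\overline{\Omega}$; the elementary inequality $t^n\le n!\,\sigma_p^{-n}\exp(\sigma_p t)$ for $t\ge 0$ then yields $E^{u,x}[\tau^n]\le n!\,\sigma_p^{-n}E^{u,x}[\exp(\sigma_p\tau)]<\infty$. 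For a general $u\in L^\infty(\Omega)$ one uses instead the uniform-in-starting-point bound $T_0:=\sup_{x\in\overline{\Omega}}E^{u,x}[\tau]<\infty$ (again \cite[Section 5.7, Lemma 7.4]{karatzas_shreve}) together with the strong Markov property of $X^u$ (valid by the Cameron--Martin--Girsanov theorem, the drift being bounded and the noise nondegenerate): writing $\tau^n=n\int_0^\tau(\tau-s)^{n-1}\,ds$ and conditioning at time $s$ on $\{s<\tau\}\in\mathcal{F}_s$ gives $\sup_{x\in\overline{\Omega}}E^{u,x}[\tau^n]\le n\,T_0\,\sup_{x\in\overline{\Omega}}E^{u,x}[\tau^{n-1}]$, whence $\sup_{x\in\overline{\Omega}}E^{u,x}[\tau^n]\le n!\,T_0^n<\infty$ by induction on $n$.

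The only step requiring genuine care is the uniform-in-$x$ control of the exit-time moments needed to close the Markov iteration (or, in the H\"older case, the exponential-moment bound supplied by Lemma \ref{lem:finiteness_of_mgf_of_tau}); this is precisely where the boundedness of the drift and the nondegeneracy of the diffusion on the bounded domain $\Omega$ enter. Everything else is bookkeeping with the triangle inequality and monotonicity of the integral.
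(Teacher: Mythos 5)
Your proof is correct, but it takes a genuinely different route from the paper's. The paper proves the first claim by changing measure: it applies Corollary \ref{cor:bounded_perturbations_yield_Lq_bounded_exp_MGs} with the H\"older base control $0$ and the bounded perturbation $u$ to get $L^2(\mu^{0,x})$-boundedness of $\mathcal{E}(M^u)^\tau$, then uses the reweighting formula (\ref{eq:reweighting_formula}) and Cauchy--Schwarz to bound $E^{u,x}[\exp(-\sigma W)]$ by $\left(E^{0,x}[\exp(-2\sigma W)]\,E^{0,x}[\mathcal{E}(M^u)_\tau^2]\right)^{1/2}$, the first factor being finite because it is the Feynman--Kac solution of (\ref{eq:feynman_kac_bvp}) with parameter $2\sigma$, finite on $\overline{\Omega}$ by Theorem \ref{thm:feynman_kac_representation_uniformly_elliptic_case_uniformly_holder}; the moment claim is then dispatched with a one-line appeal to the series expansion of the exponential. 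You instead observe that $\kappa_r\ge 0$ and boundedness of $\kappa_t$ give the deterministic bound $\exp(-\sigma W)\le\exp(\sigma\Vert\kappa_t\Vert_{L^\infty})$, making the first claim immediate, and you reduce the second claim to $E^{u,x}[\tau^n]<\infty$ via $\vert W\vert\le\Vert\kappa_r\Vert_{L^\infty}\tau+\Vert\kappa_t\Vert_{L^\infty}$, which you settle either by Lemma \ref{lem:finiteness_of_mgf_of_tau} (H\"older case) or, for general $u\in L^\infty(\Omega)$, by the classical strong-Markov iteration $\sup_x E^{u,x}[\tau^n]\le n\,T_0\sup_x E^{u,x}[\tau^{n-1}]\le n!\,T_0^n$ with $T_0=\sup_x E^{u,x}[\tau]$. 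What your route buys: it is more elementary and self-contained (no Girsanov reweighting, no elliptic regularity), it works directly for merely bounded measurable $u$, and it actually supplies the argument that the paper leaves terse --- note that finiteness of $E^{u,x}[\exp(-\sigma W)]$ by itself does not control the positive moments of $W$ (the integrand decays precisely where $W$ is large and positive), so controlling $W$ through $\tau$, as you do, is the right way to close that step. What the paper's route buys is stylistic economy: it recycles the $L^q$-bounded exponential-martingale and Feynman--Kac machinery already set up in \S\ref{ssec:Lq_bounded_exponential_martingales}, which is then reused throughout \S\ref{sec:variational_analysis}. Your statement about conditioning on $\{s<\tau\}$ should more precisely invoke the Markov property at the deterministic time $s$ together with $\tau=s+\tau\circ\theta_s$ on that event, but this is the standard argument and not a gap.
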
\begin{proof}
Since the constant $0$ belongs to $C^\alpha(\overline{\Omega})$, and since we assumed that $u\in L^\infty(\Omega)$, we may apply Corollary \ref{cor:bounded_perturbations_yield_Lq_bounded_exp_MGs} to assert that $\mathcal{E}(M^u)^\tau$ is $L^2(\mu^{0,x})$-bounded. By the reweighting formula (\ref{eq:reweighting_formula}) and the Cauchy-Schwarz inequality, 
\begin{equation*}
 E^{u,x}\left[\exp(-\sigma W)\right]\le\left(E^{0,x}\left[\exp(-2\sigma W)\right] E^{0,x}\left[\mathcal{E}(M^u)_\tau^2\right]\right)^{1/2},
\end{equation*}
where Theorem \ref{thm:feynman_kac_representation_uniformly_elliptic_case_uniformly_holder} implies that the first term on the right-hand side is finite and $L^2(\mu^{0,x})$-boundedness of $\mathcal{E}(M^u)\tau$ implies that the second term on the right-hand side is finite. This proves the first conclusion. The second conclusion follows from the series expansion of the exponential.
\end{proof}

The next result of this section will be useful later and follows from statement (ii) of Corollary \ref{cor:bounded_perturbations_yield_Lq_bounded_exp_MGs} and Lemma \ref{lem:even_moments_of_W_wrt_mu_u_x_are_finite}.
\begin{mycor}\label{cor:nonequilibrium_estimator_moments_of_all_orders}
 Suppose that Assumption \ref{asmp:regularity_asmp_dV_domain} holds. If $\sigma>0$ and $u\in C^\alpha(\overline{\Omega})$, then $K^{\sigma,u,0}, K^{\sigma,u,1}\in L^n(\mu^{u,x})$ for all $n\in\mathbb{N}$.
\end{mycor}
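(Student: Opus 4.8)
The plan is to bound each of $K^{\sigma,u,0}$ and $K^{\sigma,u,1}$ in $L^n(\mu^{u,x})$ by applying Minkowski's inequality to the decomposition (\ref{eq:ch3_nonequilibrium_estimator}) and estimating the three summands $W$, $(2\sigma)^{-1}\langle M^u\rangle_\tau$, and $\sigma^{-1}M^u_\tau$ separately. The first observation I would make is that $\overline{\Omega}$ is compact, so $C^\alpha(\overline{\Omega})\subset L^\infty(\Omega)$; hence the hypothesis $u\in C^\alpha(\overline{\Omega})$ licenses both Lemma \ref{lem:even_moments_of_W_wrt_mu_u_x_are_finite} and Corollary \ref{cor:bounded_perturbations_yield_Lq_bounded_exp_MGs} applied with $\varphi=u$. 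I would also recall that $\tau\in L^1(\mu^{u,x})$, so $\tau<\infty$ holds $\mu^{u,x}$-almost surely and the terminal random variables $\langle M^u\rangle_\tau$ and $M^u_\tau$ are well defined.

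Term by term: Lemma \ref{lem:even_moments_of_W_wrt_mu_u_x_are_finite} gives $W\in L^n(\mu^{u,x})$ for every $n\in\mathbb{N}$. Statement (ii) of Corollary \ref{cor:bounded_perturbations_yield_Lq_bounded_exp_MGs}, applied with $\varphi=u$, gives $\langle M^u\rangle_\tau\in L^r(\mu^{u,x})$ for every $1\le r<\infty$, and also $E^{u,x}[\sup_{0\le t\le\tau}\vert M^u_t\vert^s]<\infty$ for every $0<s<\infty$; since $\vert M^u_\tau\vert\le\sup_{0\le t\le\tau}\vert M^u_t\vert$ holds $\mu^{u,x}$-almost surely, this yields $M^u_\tau\in L^n(\mu^{u,x})$ for every $n\in\mathbb{N}$. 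Minkowski's inequality then gives
\begin{equation*}
 \bigl\Vert K^{\sigma,u,\alpha}\bigr\Vert_{L^n(\mu^{u,x})}\le \Vert W\Vert_{L^n(\mu^{u,x})}+\frac{1}{2\sigma}\bigl\Vert\langle M^u\rangle_\tau\bigr\Vert_{L^n(\mu^{u,x})}+\frac{\alpha}{\sigma}\bigl\Vert M^u_\tau\bigr\Vert_{L^n(\mu^{u,x})}<\infty
\end{equation*}
for $\alpha\in\{0,1\}$ and every $n\in\mathbb{N}$, which is the claim.

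There is no substantial obstacle here; the corollary is a bookkeeping consequence of the preceding results. The only points requiring a little care are the embedding $C^\alpha(\overline{\Omega})\subset L^\infty(\Omega)$ needed to match the hypotheses of the cited results, the passage from the running-supremum bound in Corollary \ref{cor:bounded_perturbations_yield_Lq_bounded_exp_MGs}(ii) to a bound on the stopped value $M^u_\tau$ (which is immediate), and keeping track of which results require only $\varphi\in L^\infty(\Omega)$ versus the stronger H\"{o}lder regularity of $u$ (the latter is used only to obtain the exponential moment of $\tau$ via Lemma \ref{lem:finiteness_of_mgf_of_tau} that underlies Corollary \ref{cor:bounded_perturbations_yield_Lq_bounded_exp_MGs}).
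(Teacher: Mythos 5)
Your proposal is correct and follows essentially the same route the paper intends: the paper states this corollary without a separate proof, remarking only that it follows from statement (ii) of Corollary \ref{cor:bounded_perturbations_yield_Lq_bounded_exp_MGs} and Lemma \ref{lem:even_moments_of_W_wrt_mu_u_x_are_finite}, which is precisely the term-by-term argument (with Minkowski's inequality and the embedding $C^\alpha(\overline{\Omega})\subset L^\infty(\Omega)$) that you spell out. Your explicit handling of $M^u_\tau$ via the running supremum is exactly the intended bookkeeping.
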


\subsection{Convergence lemma}
\label{ssec:convergence_lemma}

By the definition (\ref{eq:exponential_martingale}) of the exponential martingale, it holds that for $0\ne \delta\in\mathbb{R}$, a Borel-measurable function $\varphi:\mathbb{R}^d\to\mathbb{R}^d$, and a stopping time $\tau$,
\begin{equation*}
  \mathcal{E}(M^{\delta\varphi})_\tau=\exp\left[M^{\delta\varphi}_\tau- \frac{1}{2}\langle M^{\delta\varphi}\rangle_\tau\right]=\exp\left[\delta\left(M^{\varphi}_\tau-\frac{\delta}{2}\langle M^{\varphi}\rangle_\tau\right)\right].
 \end{equation*}
The series expansion of the exponential thus yields
 \begin{equation}\label{eq:D_minus_1}
  \mathcal{E}(M^{\delta\varphi})_\tau=1+\sum^\infty_{m=1}\frac{\delta^m\left(M^\varphi_\tau-\delta 2^{-1}\langle M^\varphi\rangle_\tau\right)^m}{m!}
 \end{equation}
 and hence 
\begin{equation}\label{eq:D_minus_1_over_delta_minus_M}
 \frac{\mathcal{E}(M^{\delta\varphi})_\tau-1}{\delta}-M^{\varphi}_\tau=-\frac{\delta}{2}\langle M^\varphi\rangle_\tau+\sum^\infty_{m=2}\frac{\delta^{m-1}\left(M^\varphi_\tau-\delta 2^{-1}\langle M^\varphi\rangle_\tau\right)^m}{m!}.
\end{equation}
We have the following convergence result:
\begin{mylem}\label{lem:dominated_convergence_theorem_application}
 Suppose that Assumption \ref{asmp:regularity_asmp_dV_domain} holds, that $u\in C^\alpha(\overline{\Omega})$, and $\varphi\in L^\infty(\Omega)$. Then for any nonzero sequence $(\delta_n)_n$ satisfying $\delta_n\to 0$ as $n\to\infty$, it holds that $\mathcal{E}(M^{\delta_n\varphi})_\tau\in L^2(\mu^{u,x})$ for all $n\in\mathbb{N}$, and
 \begin{align}\label{eq:convergence_of_expMG_to_1}
  \lim_{n\to\infty}E^{u,x}\left[\left(\mathcal{E}(M^{\delta_n\varphi})_\tau-1\right)^2\right]&=0,\\
  \label{eq:convergence_of_expMG_minus_1_over_delta}
  \lim_{n\to\infty}E^{u,x}\left[\left(\frac{\mathcal{E}(M^{\delta_n\varphi})_\tau-1}{\delta_n}-M^{\varphi}_\tau\right)^2\right]&=0.
 \end{align}
\end{mylem}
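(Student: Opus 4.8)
The plan is to verify the hypotheses of the dominated convergence theorem for each of the two displayed limits. After re-indexing I may assume $\vert\delta_n\vert\le 1$ for all $n$, which does not affect the limits. I would write $N_n:=M^\varphi_\tau-\tfrac{\delta_n}{2}\langle M^\varphi\rangle_\tau$ and $Z:=\vert M^\varphi_\tau\vert+\tfrac12\langle M^\varphi\rangle_\tau$, so that $\vert N_n\vert\le Z$ whenever $\vert\delta_n\vert\le 1$; by Corollary \ref{cor:bounded_perturbations_yield_Lq_bounded_exp_MGs}(ii) the random variables $M^\varphi_\tau$ and $\langle M^\varphi\rangle_\tau$ lie in $L^1(\mu^{u,x})$, hence are $\mu^{u,x}$-a.s. finite. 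From $\mathcal{E}(M^{\delta_n\varphi})_\tau=\exp(\delta_n N_n)$ it is immediate that $\mathcal{E}(M^{\delta_n\varphi})_\tau\to 1$ $\mu^{u,x}$-a.s.; and from (\ref{eq:D_minus_1_over_delta_minus_M}), bounding the tail series by $\sum_{m\ge 2}\vert\delta_n\vert^{m-1}\vert N_n\vert^m/m!\le\vert\delta_n\vert\sum_{m\ge 0}\vert N_n\vert^m/m!=\vert\delta_n\vert e^{\vert N_n\vert}\le\vert\delta_n\vert e^{Z}$, it follows that $\delta_n^{-1}(\mathcal{E}(M^{\delta_n\varphi})_\tau-1)-M^\varphi_\tau\to 0$ $\mu^{u,x}$-a.s. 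So the pointwise convergence is settled, and what remains is to produce $n$-independent integrable majorants.

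The key analytic step is to establish the exponential integrability $E^{u,x}[\exp(c\vert M^\varphi_\tau\vert)]<\infty$ for every $c>0$ and every $x\in\overline\Omega$. Since $\mathcal{E}(M^{2c\varphi})^\tau$ is a non-negative supermartingale (a stopped non-negative local martingale) and $\tau$ is $\mu^{u,x}$-a.s. finite, one has $E^{u,x}[\mathcal{E}(M^{2c\varphi})_\tau]\le 1$; using the identity $e^{cM^\varphi_\tau}=\mathcal{E}(M^{2c\varphi})_\tau^{1/2}\exp(c^2\langle M^\varphi\rangle_\tau)$ and the Cauchy-Schwarz inequality,
\begin{equation*}
 E^{u,x}\left[e^{cM^\varphi_\tau}\right]\le E^{u,x}\left[\mathcal{E}(M^{2c\varphi})_\tau\right]^{1/2}E^{u,x}\left[\exp\left(2c^2\langle M^\varphi\rangle_\tau\right)\right]^{1/2}\le E^{u,x}\left[\exp\left(\tfrac{c^2}{\varepsilon}\Vert\varphi\Vert^2_{L^\infty(\Omega)}\tau\right)\right]^{1/2},
\end{equation*}
where I used $\langle M^\varphi\rangle_\tau\le(2\varepsilon)^{-1}\Vert\varphi\Vert^2_{L^\infty(\Omega)}\tau$; the right-hand side is finite by Lemma \ref{lem:finiteness_of_mgf_of_tau}. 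Replacing $\varphi$ by $-\varphi$ controls $E^{u,x}[e^{-cM^\varphi_\tau}]$, and the claim follows on adding the two bounds. A further application of Cauchy-Schwarz, together with the same bound on $\langle M^\varphi\rangle_\tau$ and Lemma \ref{lem:finiteness_of_mgf_of_tau}, then gives $E^{u,x}[e^{cZ}]<\infty$ for every $c>0$. The $L^2$-membership asserted in the statement holds for every $n$, directly from Corollary \ref{cor:bounded_perturbations_yield_Lq_bounded_exp_MGs}(i) with $q=2$ applied to the bounded perturbation $\delta_n\varphi$; and for $\vert\delta_n\vert\le 1$ there is the uniform bound $\mathcal{E}(M^{\delta_n\varphi})_\tau\le e^{\vert M^\varphi_\tau\vert}\in L^2(\mu^{u,x})$.

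It remains to exhibit the majorants. From (\ref{eq:D_minus_1}) and the elementary inequality $e^t-1\le te^t$ $(t\ge 0)$, $\vert\mathcal{E}(M^{\delta_n\varphi})_\tau-1\vert\le e^{\vert\delta_n\vert\vert N_n\vert}-1\le\vert\delta_n\vert\vert N_n\vert e^{\vert N_n\vert}\le\vert\delta_n\vert Z e^{Z}$, so $(\mathcal{E}(M^{\delta_n\varphi})_\tau-1)^2\le Z^2e^{2Z}$; and the tail estimate from the first paragraph gives $\left(\tfrac{\mathcal{E}(M^{\delta_n\varphi})_\tau-1}{\delta_n}-M^\varphi_\tau\right)^2\le\left(\tfrac12\langle M^\varphi\rangle_\tau+e^{Z}\right)^2$. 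Both right-hand sides are $\mu^{u,x}$-integrable: since $z^2\le e^z$ for $z\ge 0$ one has $Z^2e^{2Z}\le e^{3Z}$, which is integrable by the previous paragraph, and the square and cross terms of $\left(\tfrac12\langle M^\varphi\rangle_\tau+e^{Z}\right)^2$ are integrable by Corollary \ref{cor:bounded_perturbations_yield_Lq_bounded_exp_MGs}(ii), the bound $E^{u,x}[e^{cZ}]<\infty$, and Cauchy-Schwarz. The dominated convergence theorem applied to the two a.s.-convergent sequences then yields (\ref{eq:convergence_of_expMG_to_1}) and (\ref{eq:convergence_of_expMG_minus_1_over_delta}); in fact these majorants show that both expectations are $O(\delta_n^2)$. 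I expect the exponential-integrability estimate of the second paragraph to be the main obstacle — it is what forces one to combine the supermartingale property of stochastic exponentials with Lemma \ref{lem:finiteness_of_mgf_of_tau} — while the rest is bookkeeping with the series (\ref{eq:D_minus_1})--(\ref{eq:D_minus_1_over_delta_minus_M}) together with the dominated convergence theorem.
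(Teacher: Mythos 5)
Your argument is correct, and it reaches the two limits by a genuinely different route than the paper. The paper keeps the exponential martingales themselves as the objects to be controlled: it uses the H\"older/Kazamaki estimate together with (\ref{eq:cauchy_schwarz_inequality_application_to_exp_of_MG}) and Corollary \ref{cor:finiteness_of_mgf_of_tau_over_interval} to obtain the uniform-in-$n$ bounds (\ref{eq:L2_bounded_expMGs}) and (\ref{eq:L2_bounded_expMGs_minus_1_over_delta_n}), i.e.\ $L^2(\mu^{u,x})$-boundedness of the families $(\mathcal{E}(M^{\delta_n\varphi})_\tau)_n$ and $(\delta_n^{-1}(\mathcal{E}(M^{\delta_n\varphi})_\tau-1))_n$, and then passes to the limit via the series identities (\ref{eq:D_minus_1})--(\ref{eq:D_minus_1_over_delta_minus_M}) and dominated convergence. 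You instead prove the pointwise exponential integrability $E^{u,x}[\exp(c\vert M^\varphi_\tau\vert)]<\infty$ for every $c>0$ — using the supermartingale property $E^{u,x}[\mathcal{E}(M^{2c\varphi})_\tau]\le 1$ in place of the Kazamaki criterion, plus the same Cauchy--Schwarz manipulation as (\ref{eq:cauchy_schwarz_inequality_application_to_exp_of_MG}) and Lemma \ref{lem:finiteness_of_mgf_of_tau} — and from it build explicit $n$-independent integrable majorants $Z^2e^{2Z}$ and $(\tfrac12\langle M^\varphi\rangle_\tau+e^Z)^2$ for the two integrands. What your approach buys is worth noting: mere $L^2$-boundedness of a family does not by itself supply the dominating function (or the uniform integrability of the squares) that Lebesgue's theorem formally requires, so the paper's final step is terse at exactly the point where your majorants make the domination explicit; moreover, since both of your bounds carry a factor $\delta_n^2$, you obtain the quantitative rate $E^{u,x}[(\mathcal{E}(M^{\delta_n\varphi})_\tau-1)^2]=O(\delta_n^2)$ and $E^{u,x}[(\delta_n^{-1}(\mathcal{E}(M^{\delta_n\varphi})_\tau-1)-M^\varphi_\tau)^2]=O(\delta_n^2)$, which is stronger than the bare limits and makes the dominated convergence theorem strictly optional. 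The price is the extra exponential-moment lemma for $\vert M^\varphi_\tau\vert$, but that lemma follows from ingredients already present in the paper (Lemma \ref{lem:finiteness_of_mgf_of_tau} and the bound $\langle M^\varphi\rangle_\tau\le(2\varepsilon)^{-1}\Vert\varphi\Vert^2_{L^\infty(\Omega)}\tau$), so no new hypotheses are introduced; your per-$n$ $L^2$-membership claim via Corollary \ref{cor:bounded_perturbations_yield_Lq_bounded_exp_MGs}(i) coincides with the paper's.
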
\begin{proof}
Without loss of generality, we may assume that the sequence $(\vert\delta_n\vert)_n$ is bounded by $0<r<1$, so that
\begin{equation}\label{eq:bound_on_sequence_of_perturbations}
 \Vert \delta_n\varphi\Vert_{L^\infty(\Omega)}\le r\Vert \varphi\Vert_{L^\infty(\Omega)}<\Vert \varphi\Vert_{L^\infty(\Omega)}.
\end{equation}
Therefore, for all $n\in\mathbb{N}$, $\delta_n\varphi\in L^\infty(\Omega)$, and by Corollary \ref{cor:bounded_perturbations_yield_Lq_bounded_exp_MGs}, it follows that $\mathcal{E}(M^{\delta_n\varphi})_\tau\in L^2(\mu^{u,x})$ for all $n\in\mathbb{N}$. By H\"{o}lder's inequality, we have (see the proof of \cite[Section 1.2, Theorem 1.5]{kazamaki})
\begin{equation*}
 E^{u,x}\left[\mathcal{E}(M^{\delta_n\varphi})^2_\tau\right]\le E^{u,x}\left[\exp\left(\frac{\sqrt{2}}{2(\sqrt{2}-1)}M^{\delta_n\varphi}_\tau\right)\right]^{2/(\sqrt{2}+1)}.
\end{equation*}
Applying (\ref{eq:cauchy_schwarz_inequality_application_to_exp_of_MG}) and (\ref{eq:bound_on_sequence_of_perturbations}) to the inequality above yields
\begin{align}
 E^{u,x}\left[\mathcal{E}(M^{\delta_n\varphi})^2_\tau\right]&\le E^{u,x}\left[\exp\left(\frac{1}{(\sqrt{2}-1)^2}\Vert \delta_n\varphi\Vert^2_{L^\infty(\Omega)}\tau\right)\right]^{1/(\sqrt{2}+1)}
 \nonumber\\
 &< E^{u,x}\left[\exp\left(\frac{1}{(\sqrt{2}-1)^2}\Vert \varphi\Vert^2_{L^\infty(\Omega)}\tau\right)\right]^{1/(\sqrt{2}+1)}.
\label{eq:L2_bounded_expMGs}
\end{align}
By Corollary \ref{cor:finiteness_of_mgf_of_tau_over_interval}, it follows that the collection of random variables $(\mathcal{E}(M^{\delta_n\varphi})_\tau)_n$ is bounded in $L^2(\mu^{u,x})$. We can therefore apply Lebesgue's dominated convergence theorem and the identity (\ref{eq:D_minus_1}) in order to obtain
\begin{equation*}
 \lim_{n\to\infty}E^{u,x}\left[\left(\mathcal{E}(M^{\delta_n\varphi})_\tau-1\right)^2\right]=E^{u,x}\left[ \lim_{n\to\infty}\left(\mathcal{E}(M^{\delta_n\varphi})_\tau-1\right)^2\right]=0,
\end{equation*}
which proves (\ref{eq:convergence_of_expMG_to_1}). To prove (\ref{eq:convergence_of_expMG_minus_1_over_delta}), we first observe that by (\ref{eq:D_minus_1}), the random series
\begin{equation}\label{eq:D_minus_1_squared}
  R(\delta_n):=\left(\mathcal{E}(M^{\delta_n\varphi})_\tau-1\right)^2-\delta_n^2\left(M^{\varphi}_\tau\right)^2\in L^1(\mu^{u,x})
 \end{equation}
consists of terms that are cubic or higher in $\delta_n$, so that $\lim_{n\to\infty} \delta_n^{-2}E^{u,x}[R(\delta_n)]=0$. Then there exists some constant $C(r)>0$ so that
\begin{align}
 E^{u,x}\left[ \left(\frac{\mathcal{E}(M^{\delta_n\varphi})_\tau-1}{\delta_n}\right)^2\right]&=E^{u,x}\left[ \langle M^{\varphi}\rangle_\tau\right]+C(r)\nonumber\\
 &\le E^{u,x}\left[\Vert\varphi\Vert^2_{L^\infty(\Omega)}\tau\right]+C(r)
 \label{eq:L2_bounded_expMGs_minus_1_over_delta_n}.
\end{align}
Thus the sequence $(\delta_n^{-1}(\mathcal{E}(M^{\delta_n\varphi})_\tau-1))_n$ is bounded in $L^2(\mu^{u,x})$. Since $M^\varphi_\tau$ belongs to $L^2(\mu^{u,x})$ and since $L^2(\mu^{u,x})$ is a vector space, it follows that the sequence $(\delta_n^{-1}(\mathcal{E}(M^{\delta_n\varphi})_\tau-1)-M^\varphi_\tau)_n$ is bounded in $L^2(\mu^{u,x})$. We can then apply Lebesgue's dominated convergence theorem and the identity (\ref{eq:D_minus_1_over_delta_minus_M}) to obtain
\begin{equation*}
 \lim_{n\to\infty}E^{u,x}\left[\left(\frac{\mathcal{E}(M^{\delta_n\varphi})_\tau-1}{\delta_n}-M^{\varphi}_\tau\right)^2\right]=E^{u,x}\left[\lim_{n\to\infty}\left(\frac{\mathcal{E}(M^{\delta_n\varphi})_\tau-1}{\delta_n}-M^{\varphi}_\tau\right)^2\right]=0,
\end{equation*}
as desired.
\end{proof}

\subsection{First variation of control functional}
\label{ssec:first_variation}
In order to derive an expression for the first variation of the control functional $\phi$ defined in (\ref{eq:optimisation_inequality}), we shall proceed by showing the convergence in $L^1(\mu^{u,x})$ of a sequence of finite differences of $\phi^{\sigma,x}$. For a given sequence $(\delta_n)_n$ and function $\varphi$ that satisfy the conditions of Lemma \ref{lem:dominated_convergence_theorem_application}, we define the finite difference random variables by
\begin{equation}\label{eq:h_n_firstvariation}
 h_n^{\sigma,u,\varphi}:= K^{\sigma,u,0}\frac{\mathcal{E}(M^{\delta_n\varphi})_\tau-1}{\delta_n}+\frac{1}{\sigma}\left(\langle M^u,M^\varphi\rangle_\tau+\frac{\delta_n}{ 2} \langle M^\varphi\rangle_\tau\right)\mathcal{E}(M^{\delta_n\varphi})_\tau.
\end{equation}
We show that the $(h_n^{\sigma,u,\varphi})_n$ are indeed the appropriate finite differences in the next
\begin{mylem}\label{lem:barphi_at_c_plus_varphi_minus_barphi_at_c}
 Suppose that Assumption \ref{asmp:regularity_asmp_dV_domain} holds, that $u\in C^\alpha(\overline{\Omega})$, $\varphi\in L^\infty(\Omega)$ and $(\delta_n)_n$ be an arbitrary nonzero sequence satisfying $\delta_n\to 0$ as $n\to\infty$. Let $(h_n^{\sigma,u,\varphi})_n$ be the corresponding sequence given by (\ref{eq:h_n_firstvariation}). Then $(h_n^{\sigma,u,\varphi})_n\subset L^1(\mu^{u,x})$ and
\begin{equation*}
 \frac{\phi^{\sigma,x}(u+\delta_n\varphi)-\phi^{\sigma,x}(u)}{\delta_n}=E^{u,x}\left[h_n^{\sigma,u,\varphi}\right]\hskip2ex \forall n\in\mathbb{N}.
\end{equation*} 
\end{mylem}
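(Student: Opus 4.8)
The plan is to use the reweighting formula (\ref{eq:reweighting_formula}) to express both $\phi^{\sigma,x}(u+\delta_n\varphi)$ and $\phi^{\sigma,x}(u)$ as expectations against the single measure $\mu^{u,x}$, after which $h_n^{\sigma,u,\varphi}$ emerges as the integrand of the difference quotient; the only analytic inputs are an algebraic identity for $K^{\sigma,u+\delta_n\varphi,0}$ and the moment bounds of \S\ref{ssec:Lq_bounded_exponential_martingales}.

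First I would record the algebra. Since $M^{u+\delta_n\varphi}=M^u+\delta_n M^\varphi$, bilinearity of the quadratic covariation gives $\langle M^{u+\delta_n\varphi}\rangle_\tau=\langle M^u\rangle_\tau+2\delta_n\langle M^u,M^\varphi\rangle_\tau+\delta_n^2\langle M^\varphi\rangle_\tau$, so by the definition (\ref{eq:ch3_nonequilibrium_estimator}) with $\alpha=0$,
\[
 K^{\sigma,u+\delta_n\varphi,0}=K^{\sigma,u,0}+\frac{\delta_n}{\sigma}\left(\langle M^u,M^\varphi\rangle_\tau+\frac{\delta_n}{2}\langle M^\varphi\rangle_\tau\right).
\]
Next, because $\delta_n\varphi\in L^\infty(\Omega)$, Corollary \ref{cor:bounded_perturbations_yield_Lq_bounded_exp_MGs} shows $\mathcal{E}(M^{\delta_n\varphi})^\tau$ is a uniformly integrable martingale, so $\mathcal{E}(M^{\delta_n\varphi})_\tau$ is the density of $\mu^{u+\delta_n\varphi,x}$ with respect to $\mu^{u,x}$ on $\mathcal{F}_\tau$, and the reweighting formula (\ref{eq:reweighting_formula}) extends from $W$ to any $\mathcal{F}_\tau$-measurable functional with finite expectation. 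Applying it with the superscript bookkeeping $(u+\delta_n\varphi)=(u)+(\delta_n\varphi)$ gives $\phi^{\sigma,x}(u+\delta_n\varphi)=E^{u,x}\big[K^{\sigma,u+\delta_n\varphi,0}\,\mathcal{E}(M^{\delta_n\varphi})_\tau\big]$; substituting the displayed identity, subtracting $\phi^{\sigma,x}(u)=E^{u,x}[K^{\sigma,u,0}]$, and dividing by $\delta_n$ yields exactly $E^{u,x}[h_n^{\sigma,u,\varphi}]$ with $h_n^{\sigma,u,\varphi}$ as in (\ref{eq:h_n_firstvariation}).

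It remains to check $h_n^{\sigma,u,\varphi}\in L^1(\mu^{u,x})$. The displayed identity rewrites $h_n^{\sigma,u,\varphi}$ as $\delta_n^{-1}\big(K^{\sigma,u+\delta_n\varphi,0}\mathcal{E}(M^{\delta_n\varphi})_\tau-K^{\sigma,u,0}\big)$, so it suffices that each of the two terms be $\mu^{u,x}$-integrable. Here $K^{\sigma,u,0}\in L^2(\mu^{u,x})$ by Corollary \ref{cor:nonequilibrium_estimator_moments_of_all_orders}. For the other term, the elementary bound $0\le\langle M^{u+\delta_n\varphi}\rangle_\tau\le(2\varepsilon)^{-1}\Vert u+\delta_n\varphi\Vert_{L^\infty(\Omega)}^2\,\tau$ gives $\vert K^{\sigma,u+\delta_n\varphi,0}\vert\le\vert W\vert+C\tau$ for a constant $C=C(n)$; since $W\in L^2(\mu^{u,x})$ (Lemma \ref{lem:even_moments_of_W_wrt_mu_u_x_are_finite}), $\tau\in L^2(\mu^{u,x})$ (Lemma \ref{lem:finiteness_of_mgf_of_tau}, using $u\in C^\alpha(\overline{\Omega})$), and $\mathcal{E}(M^{\delta_n\varphi})_\tau\in L^2(\mu^{u,x})$ (Corollary \ref{cor:bounded_perturbations_yield_Lq_bounded_exp_MGs}), Cauchy--Schwarz gives $K^{\sigma,u+\delta_n\varphi,0}\mathcal{E}(M^{\delta_n\varphi})_\tau\in L^1(\mu^{u,x})$; this is also precisely what makes the change of measure above legitimate and $\phi^{\sigma,x}(u+\delta_n\varphi)$ finite.

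I expect the only genuine difficulty to be bookkeeping of two kinds. First, the change of measure must be justified up to the \emph{random} time $\tau$; this is where uniform integrability (Corollary \ref{cor:bounded_perturbations_yield_Lq_bounded_exp_MGs}) does the work rather than Novikov's condition, since $\Vert\delta_n\varphi\Vert_{L^\infty(\Omega)}$ is not assumed small. Second, $\varphi$ lies only in $L^\infty(\Omega)$ and not in $C^\alpha(\overline{\Omega})$, so $u+\delta_n\varphi$ need not be H\"older continuous; consequently the H\"older-control hypotheses of Corollary \ref{cor:nonequilibrium_estimator_moments_of_all_orders} and Lemma \ref{lem:finiteness_of_mgf_of_tau} cannot be invoked under $\mu^{u+\delta_n\varphi,x}$, and the argument must be organised so that every expectation requiring such a moment bound is taken against $\mu^{u,x}$, whose base control $u$ is H\"older continuous.
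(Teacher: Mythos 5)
Your proposal is correct and follows essentially the same route as the paper: justify the change of measure up to $\tau$ via the uniform integrability coming from the $L^q$-bounds of Corollary \ref{cor:bounded_perturbations_yield_Lq_bounded_exp_MGs}, expand $\langle M^{u+\delta_n\varphi}\rangle_\tau$ by bilinearity as in (\ref{eq:A_of_c_plus_varphi}), apply the reweighting formula (\ref{eq:reweighting_formula}) to write $\phi^{\sigma,x}(u+\delta_n\varphi)$ as a $\mu^{u,x}$-expectation, and take differences. The only (cosmetic) difference is in the integrability bookkeeping: you bound the undivided difference $K^{\sigma,u+\delta_n\varphi,0}\mathcal{E}(M^{\delta_n\varphi})_\tau-K^{\sigma,u,0}$ directly, while the paper estimates the three summands of $h_n^{\sigma,u,\varphi}$ term by term using (\ref{eq:ineq_arith_geom_means}), (\ref{eq:kunita_watanabe_inequality}), (\ref{eq:L2_bounded_expMGs}) and (\ref{eq:L2_bounded_expMGs_minus_1_over_delta_n}); both rest on the same moment estimates, and your remark that all expectations must be taken under $\mu^{u,x}$ (since $u+\delta_n\varphi$ need not be H\"older) matches the paper's organisation.
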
\begin{proof}
In order to show that $\left\{h^{\sigma,u,\varphi}_n\vert n\in\mathbb{N}\right\}\subset L^1(\mu^{u,x})$, we shall apply the Cauchy-Schwarz inequality individually to the three summands on the right-hand side of (\ref{eq:h_n_firstvariation}). Recall the inequality of arithmetic and geometric means,
\begin{equation}\label{eq:ineq_arith_geom_means}
 \vert ab\vert\le 2^{-1}(a^2+b^2),
\end{equation}
with equality holding if and only if $a=b$, and the Kunita-Watanabe inequality
\begin{equation}\label{eq:kunita_watanabe_inequality}
 \langle M^{\varphi},M^\zeta\rangle_\tau\le \left(\langle M^\varphi\rangle_\tau\langle M^\zeta\rangle_\tau\right)^{1/2}
\end{equation}
that holds almost surely, with equality holding almost surely if and only if $\varphi=\zeta$ except on a set of Lebesgue measure zero. The stated assumptions guarantee that $W\in L^2(\mu^{u,x})$, by Lemma \ref{lem:even_moments_of_W_wrt_mu_u_x_are_finite}; that the sequences $(\delta_n^{-1}(\mathcal{E}(M^{\delta_n\varphi})\tau-1))_n$ and $(\mathcal{E}(M^{\delta_n\varphi})_\tau)_n$ are bounded sequences in $L^2(\mu^{u,x})$, by (\ref{eq:L2_bounded_expMGs_minus_1_over_delta_n}) and (\ref{eq:L2_bounded_expMGs}); and that integer moments of quadratic variation processes belong to $L^2(\mu^{u,x})$, by Corollary \ref{cor:bounded_perturbations_yield_Lq_bounded_exp_MGs} (ii). Since $K^{\sigma,u,0}\in L^2(\mu^{u,x})$ by Corollary \ref{cor:nonequilibrium_estimator_moments_of_all_orders} and 
 \begin{equation}\label{eq:inequality_02}
 E^{u,x}\left[\vert \langle M^u,M^\varphi\rangle_\tau\vert^2\right]\le E^{u,x}\left[ \langle M^u\rangle_\tau+\langle M^\varphi\rangle_\tau^2\right]
\end{equation}
by (\ref{eq:ineq_arith_geom_means}) and (\ref{eq:kunita_watanabe_inequality}), it follows that $\left\{h_n\vert n\in\mathbb{N}\right\}\subset L^1(\mu^{u,x})$, thus proving the first statement.

To prove the second statement, observe that by Lemma \ref{lem:dominated_convergence_theorem_application}, it follows that each $\mathcal{E}(M^{\delta_n\varphi})$ is a $L^2(\mu^{u,x})$-bounded exponential martingale. Since $L^q$-boundedness for $q>1$ implies uniform integrability, we can apply the reweighting formula (\ref{eq:reweighting_formula}) to express $\phi^{\sigma,x}(u+\delta_n\varphi)$ as an expectation with respect to $\mu^{u,x}$. By the definition of the quadratic variation, we have for any stopping time $\tau$ that
\begin{equation}\label{eq:A_of_c_plus_varphi}
 \langle M^{u+\delta_n\varphi}\rangle_\tau=\langle M^{u}\rangle_\tau+\delta_n^2\langle M^\varphi\rangle_\tau+2\delta_n\langle M^u,M^\varphi\rangle_\tau.
\end{equation}
By the reweighting formula and (\ref{eq:A_of_c_plus_varphi}) we therefore have
\begin{equation*}
 \phi^{\sigma,x}(u+\delta_n\varphi)=E^{u,x}\left[\left(W+\frac{\langle M^u\rangle_\tau}{2\sigma}+\frac{\delta_n^2}{2\sigma}\langle M^\varphi\rangle_\tau+\frac{\delta_n}{\sigma}\langle M^u,M^\varphi\rangle_\tau\right)\mathcal{E}(M^{\delta_n\varphi})_\tau\right].
\end{equation*}
Taking differences yields the desired conclusion.
\end{proof}

We now define the limiting random variable to be
\begin{equation}\label{eq:h_firstvariation}
 h^{\sigma,u,\varphi}:=K^{\sigma,u,0}M^{\varphi}_\tau+\sigma^{-1}\langle M^u,M^\varphi\rangle_\tau,
\end{equation}
and show that the expected value of $h^{\sigma,u,\varphi}$ is equal to the first variation of $\phi^{\sigma,x}(u)$ in the direction of $\varphi$:
\begin{mythm}\label{thm:first_variation}
 If the hypotheses of Lemma \ref{lem:barphi_at_c_plus_varphi_minus_barphi_at_c} are satisfied, then $h\in L^1(\mu^{u,x})$, $h_n$ converges to $h$ in $L^1(\mu^{u,x})$, and the first variation of $\phi^{\sigma,x}(u)$ along $\varphi$ satisfies
 \begin{equation}\label{eq:first_variation}
  \Phi^{\sigma,x}(u;\varphi):=\lim_{n\to\infty}\frac{\phi^{\sigma,x}(u+\delta_n\varphi)-\phi^{\sigma,x}(u)}{\delta_n}=E^{u,x}[h^{\sigma,u,\varphi}].
 \end{equation}
\end{mythm}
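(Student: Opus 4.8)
\emph{Proof proposal.} The plan is to identify $h^{\sigma,u,\varphi}$ as the $L^1(\mu^{u,x})$-limit of the finite-difference random variables $h_n^{\sigma,u,\varphi}$ from \eqref{eq:h_n_firstvariation} and then to pass to the limit in the identity supplied by Lemma \ref{lem:barphi_at_c_plus_varphi_minus_barphi_at_c}. First I would assemble the integrability facts needed, all of which are already available: $K^{\sigma,u,0}\in L^2(\mu^{u,x})$ by Corollary \ref{cor:nonequilibrium_estimator_moments_of_all_orders}; $M^\varphi_\tau\in L^2(\mu^{u,x})$ and $\langle M^\varphi\rangle_\tau\in L^2(\mu^{u,x})$ by Corollary \ref{cor:bounded_perturbations_yield_Lq_bounded_exp_MGs}(ii); and $\langle M^u,M^\varphi\rangle_\tau\in L^2(\mu^{u,x})$, obtained by squaring the Kunita--Watanabe inequality \eqref{eq:kunita_watanabe_inequality}, applying the Cauchy--Schwarz inequality, and invoking the bounds on $E^{u,x}[\langle M^u\rangle_\tau^2]$ and $E^{u,x}[\langle M^\varphi\rangle_\tau^2]$ from Corollary \ref{cor:bounded_perturbations_yield_Lq_bounded_exp_MGs}(ii). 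Combining $K^{\sigma,u,0}\in L^2$ with $M^\varphi_\tau\in L^2$ via Cauchy--Schwarz, together with $\langle M^u,M^\varphi\rangle_\tau\in L^1$, shows $h^{\sigma,u,\varphi}\in L^1(\mu^{u,x})$.

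For the convergence $h_n^{\sigma,u,\varphi}\to h^{\sigma,u,\varphi}$ in $L^1(\mu^{u,x})$, I would write, using \eqref{eq:h_n_firstvariation} and \eqref{eq:h_firstvariation},
\begin{equation*}
 h_n^{\sigma,u,\varphi}-h^{\sigma,u,\varphi}=K^{\sigma,u,0}\left(\frac{\mathcal{E}(M^{\delta_n\varphi})_\tau-1}{\delta_n}-M^\varphi_\tau\right)+\frac{1}{\sigma}\langle M^u,M^\varphi\rangle_\tau\left(\mathcal{E}(M^{\delta_n\varphi})_\tau-1\right)+\frac{\delta_n}{2\sigma}\langle M^\varphi\rangle_\tau\,\mathcal{E}(M^{\delta_n\varphi})_\tau,
\end{equation*}
and bound the $L^1(\mu^{u,x})$-norm of each summand by the Cauchy--Schwarz inequality. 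The first term tends to $0$ because $K^{\sigma,u,0}\in L^2$ while $\delta_n^{-1}(\mathcal{E}(M^{\delta_n\varphi})_\tau-1)\to M^\varphi_\tau$ in $L^2(\mu^{u,x})$ by \eqref{eq:convergence_of_expMG_minus_1_over_delta}; the second tends to $0$ because $\langle M^u,M^\varphi\rangle_\tau\in L^2$ while $\mathcal{E}(M^{\delta_n\varphi})_\tau-1\to 0$ in $L^2(\mu^{u,x})$ by \eqref{eq:convergence_of_expMG_to_1}; and the third tends to $0$ because $\langle M^\varphi\rangle_\tau\in L^2$, the family $(\mathcal{E}(M^{\delta_n\varphi})_\tau)_n$ is bounded in $L^2(\mu^{u,x})$ by \eqref{eq:L2_bounded_expMGs}, and $\delta_n\to 0$. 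Hence $h_n^{\sigma,u,\varphi}\to h^{\sigma,u,\varphi}$ in $L^1(\mu^{u,x})$.

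Finally, since $L^1$-convergence implies convergence of expectations, I would combine the identity $\delta_n^{-1}(\phi^{\sigma,x}(u+\delta_n\varphi)-\phi^{\sigma,x}(u))=E^{u,x}[h_n^{\sigma,u,\varphi}]$ of Lemma \ref{lem:barphi_at_c_plus_varphi_minus_barphi_at_c} with the preceding step and let $n\to\infty$, obtaining $\lim_{n\to\infty}\delta_n^{-1}(\phi^{\sigma,x}(u+\delta_n\varphi)-\phi^{\sigma,x}(u))=E^{u,x}[h^{\sigma,u,\varphi}]$. Since the value $E^{u,x}[h^{\sigma,u,\varphi}]$ does not depend on the particular null sequence $(\delta_n)_n$, the limit defining $\Phi^{\sigma,x}(u;\varphi)$ exists and equals $E^{u,x}[h^{\sigma,u,\varphi}]$. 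I expect the only mildly delicate point to be the uniform $L^2$ control over $(\mathcal{E}(M^{\delta_n\varphi})_\tau)_n$ that lets Cauchy--Schwarz turn the $L^2$ statements of Lemma \ref{lem:dominated_convergence_theorem_application} into the $L^1$ statements needed here; the rest is bookkeeping with moment bounds already in hand.
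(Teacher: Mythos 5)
Your proposal is correct and follows essentially the same route as the paper: the same three-term decomposition of $h_n^{\sigma,u,\varphi}-h^{\sigma,u,\varphi}$, the same use of Cauchy--Schwarz together with the $L^2$ facts from Corollaries \ref{cor:bounded_perturbations_yield_Lq_bounded_exp_MGs}(ii) and \ref{cor:nonequilibrium_estimator_moments_of_all_orders}, the bound \eqref{eq:L2_bounded_expMGs}, and the convergence statements \eqref{eq:convergence_of_expMG_to_1}--\eqref{eq:convergence_of_expMG_minus_1_over_delta}, followed by passing to the limit in the identity of Lemma \ref{lem:barphi_at_c_plus_varphi_minus_barphi_at_c}. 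The only cosmetic difference is that you dispatch the third term directly via boundedness times $\delta_n\to 0$ where the paper also invokes dominated convergence, which changes nothing of substance.
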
\begin{proof}
The third conclusion follows from the second, since 
 \begin{equation}\label{eq:abs_expectation_less_than_expectation_abs}
  \left\vert E^{u,x}\left[ h^{\sigma,u,\varphi}-h_n^{\sigma,u,\varphi}\right]\right\vert\le E^{u,x}\left[ \left\vert h^{\sigma,u,\varphi}-h_n^{\sigma,u,\varphi}\right\vert\right].
 \end{equation}
 The first conclusion, that $h^{\sigma,u,\varphi}\in L^1(\mu^{u,x})$, follows from the observation that 
  \begin{align*}
  E^{u,x}\left[\vert h^{\sigma,u,\varphi}\vert\right]&\le E^{u,x}\left[ \vert K^{\sigma,u,0} M^\varphi_\tau\vert\right] +\sigma^{-1}E^{u,x}\left[\vert \langle M^u,M^\varphi\rangle_\tau\vert \right],
 \end{align*}
and by using the Cauchy-Schwarz inequality, Corollary \ref{cor:nonequilibrium_estimator_moments_of_all_orders}, and (\ref{eq:inequality_02}).
 
It remains to show the convergence in $L^1(\mu^{u,x})$ of $h_n^{\sigma,u,\varphi}$ to $h^{\sigma,u,\varphi}$. We apply the triangle inequality to obtain
\begin{align}
 \vert h_n^{\sigma,u,\varphi}&-h^{\sigma,u,\varphi}\vert\le \left\vert K^{\sigma,u,0}\left(\frac{\mathcal{E}(M^{\delta_n\varphi})_\tau-1}{\delta_n}-M^{\varphi}_\tau\right)\right\vert
 \label{eq:inequality_for_L1_cvgce_first_variation}
\\
&+\frac{1}{\sigma}\left(\left\vert\langle M^u,M^\varphi\rangle_\tau\left(\mathcal{E}(M^{\delta_n\varphi})_\tau-1\right)\right\vert\right)+\frac{\vert\delta_n\vert}{ 2\sigma}\langle M^\varphi\rangle_\tau \mathcal{E}(M^{\delta_n\varphi})_\tau.
\nonumber
\end{align}
Recall that $(\mathcal{E}(M^{\delta_n\varphi})_\tau)_n$ is a bounded sequence in $L^2(\mu^{u,x})$, by (\ref{eq:L2_bounded_expMGs}). Therefore, by the Cauchy-Schwarz inequality and Lebesgue's dominated convergence theorem, the third term on the right-hand side converges to zero in $L^1(\mu^{u,x})$. By the Cauchy-Schwarz inequality, the bound (\ref{eq:inequality_02}), and the convergence result (\ref{eq:convergence_of_expMG_to_1}), the second term on the right-hand side converges to zero in $L^1(\mu^{u,x})$. By the Cauchy-Schwarz inequality, Corollary \ref{cor:nonequilibrium_estimator_moments_of_all_orders}, and the convergence result (\ref{eq:convergence_of_expMG_minus_1_over_delta}), the first term on the right-hand side converges to zero in $L^1(\mu^{u,x})$. This completes the proof.
\end{proof}

\begin{mycor}\label{cor:useful_relation_first_variation}
The first variation $\Phi^{\sigma,x}(u;\varphi)$ satisfies
\begin{equation}\label{eq:useful_relation_first_variation}
\Phi^{\sigma,x}(u;\varphi)=E^{u,x}\left[K^{\sigma,u,1}M^\varphi_\tau\right].
\end{equation}
In particular, the first variation $\Phi^{\sigma,x}(u;\varphi)$ vanishes if and only if $K^{\sigma,u,1}$ and $M^\varphi_\tau$ are uncorrelated with respect to $\mu^{u,x}$.
\end{mycor}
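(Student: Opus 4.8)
The plan is to start from the representation $\Phi^{\sigma,x}(u;\varphi)=E^{u,x}[h^{\sigma,u,\varphi}]$ supplied by Theorem~\ref{thm:first_variation} together with the definition (\ref{eq:h_firstvariation}) of $h^{\sigma,u,\varphi}$, and to reduce the identity (\ref{eq:useful_relation_first_variation}) to the single claim
\begin{equation}\label{eq:mart_identity_plan}
 E^{u,x}\left[\langle M^u,M^\varphi\rangle_\tau\right]=E^{u,x}\left[M^u_\tau M^\varphi_\tau\right].
\end{equation}
Indeed, once (\ref{eq:mart_identity_plan}) is available, the decomposition $K^{\sigma,u,1}=K^{\sigma,u,0}+\sigma^{-1}M^u_\tau$ from (\ref{eq:ch3_nonequilibrium_estimator}) and linearity of the expectation give
\[
 E^{u,x}[h^{\sigma,u,\varphi}]=E^{u,x}\left[K^{\sigma,u,0}M^\varphi_\tau\right]+\sigma^{-1}E^{u,x}\left[M^u_\tau M^\varphi_\tau\right]=E^{u,x}\left[K^{\sigma,u,1}M^\varphi_\tau\right],
\]
which is precisely (\ref{eq:useful_relation_first_variation}).

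To prove (\ref{eq:mart_identity_plan}), I would invoke the integration-by-parts formula for continuous local martingales: since $M^u$ and $M^\varphi$ start at the origin, the process $Z_t:=M^u_{t\wedge\tau}M^\varphi_{t\wedge\tau}-\langle M^u,M^\varphi\rangle_{t\wedge\tau}$ is a continuous local martingale with $Z_0=0$. The crucial point is that $Z$ admits a $\mu^{u,x}$-integrable dominating function independent of $t$: the Kunita--Watanabe inequality gives $\vert\langle M^u,M^\varphi\rangle_{t\wedge\tau}\vert\le\langle M^u\rangle_\tau^{1/2}\langle M^\varphi\rangle_\tau^{1/2}$, while trivially $\vert M^u_{t\wedge\tau}M^\varphi_{t\wedge\tau}\vert\le\sup_{0\le s\le\tau}\vert M^u_s\vert\,\sup_{0\le s\le\tau}\vert M^\varphi_s\vert$; both right-hand sides belong to $L^1(\mu^{u,x})$ by statement (ii) of Corollary~\ref{cor:bounded_perturbations_yield_Lq_bounded_exp_MGs}, applied to $\varphi\in L^\infty(\Omega)$ and to $u$ (noting $C^\alpha(\overline\Omega)\subset L^\infty(\Omega)$), together with the Cauchy--Schwarz inequality. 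Consequently $Z$ is a uniformly integrable martingale, so $E^{u,x}[Z_t]=0$ for every $t$; since $\tau<\infty$ almost surely (because $\tau\in L^1(\mu^{u,x})$), we have $Z_t\to M^u_\tau M^\varphi_\tau-\langle M^u,M^\varphi\rangle_\tau$ as $t\to\infty$, and dominated convergence yields (\ref{eq:mart_identity_plan}).

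For the final assertion, I would observe that $(M^u)^\tau$ and $(M^\varphi)^\tau$ are $L^2(\mu^{u,x})$-bounded martingales, by the same computation as in \S\ref{ssec:stochastic_optimal_control_problem} (using $u,\varphi\in L^\infty(\Omega)$ and $\tau\in L^1(\mu^{u,x})$), hence uniformly integrable; optional stopping then gives $E^{u,x}[M^\varphi_\tau]=0$. Since $K^{\sigma,u,1}\in L^2(\mu^{u,x})$ by Corollary~\ref{cor:nonequilibrium_estimator_moments_of_all_orders} and $M^\varphi_\tau\in L^2(\mu^{u,x})$ by statement (ii) of Corollary~\ref{cor:bounded_perturbations_yield_Lq_bounded_exp_MGs}, the covariance of $K^{\sigma,u,1}$ and $M^\varphi_\tau$ under $\mu^{u,x}$ equals $E^{u,x}[K^{\sigma,u,1}M^\varphi_\tau]-E^{u,x}[K^{\sigma,u,1}]\,E^{u,x}[M^\varphi_\tau]=\Phi^{\sigma,x}(u;\varphi)$ by (\ref{eq:useful_relation_first_variation}), so $\Phi^{\sigma,x}(u;\varphi)=0$ exactly when $K^{\sigma,u,1}$ and $M^\varphi_\tau$ are uncorrelated.

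I expect the main obstacle to be the upgrade of $M^uM^\varphi-\langle M^u,M^\varphi\rangle$ from an a priori local martingale to a genuine martingale that may be evaluated at the unbounded stopping time $\tau$; this is exactly where the integrability estimates of Corollary~\ref{cor:bounded_perturbations_yield_Lq_bounded_exp_MGs} (and hence the Fredholm-theoretic finiteness of the moment generating function of $\tau$ established in Lemma~\ref{lem:finiteness_of_mgf_of_tau}) are brought to bear, via the uniform domination of $(Z_t)_{t\ge 0}$.
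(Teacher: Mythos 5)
Your proposal is correct and follows essentially the same route as the paper: decompose $K^{\sigma,u,1}=K^{\sigma,u,0}+\sigma^{-1}M^u_\tau$ and use the identity $E^{u,x}[\langle M^u,M^\varphi\rangle_\tau]=E^{u,x}[M^u_\tau M^\varphi_\tau]$ together with $E^{u,x}[M^\varphi_\tau]=0$ for the uncorrelatedness claim. The only difference is that you supply a full justification (via domination of the stopped local martingale $M^uM^\varphi-\langle M^u,M^\varphi\rangle$ and the moment bounds of Corollary \ref{cor:bounded_perturbations_yield_Lq_bounded_exp_MGs}) of the expectation identity that the paper simply cites as a fact, which is a welcome addition but not a different argument.
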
\begin{proof}
The first assertion follows from the definition (\ref{eq:ch3_nonequilibrium_estimator}) of $K^{\sigma,u,\alpha}$ and the fact that $E^{u,x}[\langle M^u,M^\varphi\rangle_\tau]=E^{u,x}[M^u_\tau M^\varphi_\tau]$. The second assertion follows from (\ref{eq:useful_relation_first_variation}).
\end{proof}

Corollary \ref{cor:useful_relation_first_variation} is consistent with the assertion made earlier, that the optimal control that solves the stochastic optimal control problem (\ref{eq:stochastic_optimal_control_problem}) yields an optimal importance sampling measure, i.e. a zero-variance estimator for the value function $F$ for any given pair of arguments.

\subsection{First variation of first variation}
\label{ssec:mixed_second_variation}

In this section, we derive an expression for the first variation of the functional $\Phi^{\sigma,x}(u;\varphi)$ along an admissible perturbation $\zeta$, where the argument $u$ is perturbed and the parameters $\sigma$, $x$, and $\varphi$ are held fixed. As in \S\ref{ssec:first_variation}, we shall first define for a given sequence $(\delta_n)_n$ a sequence of finite differences:
\begin{align}
 h^{\sigma,u,\varphi,\zeta}_n:=&\left(K^{\sigma,u,0}M^\varphi_\tau+\frac{1}{\sigma}\langle M^u,M^\varphi\rangle_\tau\right)\frac{\mathcal{E}(M^{\delta_n\zeta})_\tau-1}{\delta_n}
 \label{eq:h_n_mixed_second_variation}
 \\
 &+\frac{1}{\sigma}\left(\langle M^u,M^\zeta\rangle_\tau M^\varphi_\tau+\langle M^\varphi,M^\zeta\rangle_\tau+\frac{\delta_n}{2}\langle M^\zeta\rangle_\tau\right)\mathcal{E}(M^{\delta_n\zeta})_\tau.
 \nonumber
 \end{align}
 The following result is analogous to Lemma \ref{lem:barphi_at_c_plus_varphi_minus_barphi_at_c}:
 \begin{mylem}\label{lem:firstvar_at_cpluspert_minus_firstvar_at_c}
  Suppose that Assumption \ref{asmp:regularity_asmp_dV_domain} holds, that $u\in C^\alpha(\overline{\Omega})$, that $\varphi,\zeta\in L^\infty(\Omega)$, and that $(\delta_n)_n$ is an arbitrary nonzero sequence satisfying $\delta_n\to 0$ as $n\to\infty$. Let $(h^{\sigma,u,\varphi,\zeta})_n$ be the corresponding sequence given by (\ref{eq:h_n_mixed_second_variation}). Then $(h^{\sigma,u,\varphi,\zeta}_n)_ n\subset L^1(\mu^{u,x})$ and 
  \begin{equation*}
 \frac{\Phi^{\sigma,x}(u+\delta_n\zeta;\varphi)-\Phi^{\sigma,x}(u;\varphi)}{\delta_n}=E^{u,x}\left[h^{\sigma,u,\varphi,\zeta}_n\right].
\end{equation*}
 \end{mylem}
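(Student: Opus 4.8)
The plan is to imitate the proof of Lemma~\ref{lem:barphi_at_c_plus_varphi_minus_barphi_at_c}, with the representation $\Phi^{\sigma,x}(u';\varphi)=E^{u',x}[h^{\sigma,u',\varphi}]$ from Theorem~\ref{thm:first_variation} now playing the role that $\phi^{\sigma,x}(u')=E^{u',x}[K^{\sigma,u',0}]$ played there. As in Lemma~\ref{lem:dominated_convergence_theorem_application}, one may assume without loss of generality that $(\vert\delta_n\vert)_n$ is bounded by some $0<r<1$; then $\delta_n\zeta\in L^\infty(\Omega)$ for every $n$, the stopped exponential martingale $\mathcal{E}(M^{\delta_n\zeta})^\tau$ is $L^2(\mu^{u,x})$-bounded by Corollary~\ref{cor:bounded_perturbations_yield_Lq_bounded_exp_MGs}(i), and, arguing as in~(\ref{eq:L2_bounded_expMGs})--(\ref{eq:L2_bounded_expMGs_minus_1_over_delta_n}) with $\zeta$ in place of $\varphi$, the families $(\mathcal{E}(M^{\delta_n\zeta})_\tau)_n$ and $(\delta_n^{-1}(\mathcal{E}(M^{\delta_n\zeta})_\tau-1))_n$ are bounded in $L^2(\mu^{u,x})$ uniformly in $n$.

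For the integrability claim $(h^{\sigma,u,\varphi,\zeta}_n)_n\subset L^1(\mu^{u,x})$ I would bound each summand of~(\ref{eq:h_n_mixed_second_variation}) separately by H\"older's inequality. The factors $K^{\sigma,u,0}$, $M^\varphi_\tau$ and $M^\zeta_\tau$ belong to $L^p(\mu^{u,x})$ for every finite $p$ by Corollary~\ref{cor:nonequilibrium_estimator_moments_of_all_orders} and Corollary~\ref{cor:bounded_perturbations_yield_Lq_bounded_exp_MGs}(ii); the covariation factors $\langle M^u,M^\varphi\rangle_\tau$, $\langle M^u,M^\zeta\rangle_\tau$, $\langle M^\varphi,M^\zeta\rangle_\tau$ and $\langle M^\zeta\rangle_\tau$ likewise belong to $L^p(\mu^{u,x})$ for every finite $p$, by the Kunita--Watanabe inequality~(\ref{eq:kunita_watanabe_inequality}), the arithmetic--geometric mean inequality~(\ref{eq:ineq_arith_geom_means}) and Corollary~\ref{cor:bounded_perturbations_yield_Lq_bounded_exp_MGs}(ii); and the exponential-martingale factors $\mathcal{E}(M^{\delta_n\zeta})_\tau$ and $\delta_n^{-1}(\mathcal{E}(M^{\delta_n\zeta})_\tau-1)$ are bounded in $L^2(\mu^{u,x})$ uniformly in $n$ by the previous paragraph. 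Since, after distributing, each term arising in~(\ref{eq:h_n_mixed_second_variation}) is a product of at most three such factors, a three-factor H\"older split (for instance with exponents $4$, $4$, $2$) places each summand, and hence $h^{\sigma,u,\varphi,\zeta}_n$, in $L^1(\mu^{u,x})$, with a bound uniform in $n$.

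For the finite-difference identity I would use that $\mathcal{E}(M^{\delta_n\zeta})^\tau$, being $L^2(\mu^{u,x})$-bounded, is uniformly integrable, so that the reweighting formula~(\ref{eq:reweighting_formula}) applied to Theorem~\ref{thm:first_variation} gives
\begin{equation*}
\Phi^{\sigma,x}(u+\delta_n\zeta;\varphi)=E^{u+\delta_n\zeta,x}\left[h^{\sigma,u+\delta_n\zeta,\varphi}\right]=E^{u,x}\left[h^{\sigma,u+\delta_n\zeta,\varphi}\,\mathcal{E}(M^{\delta_n\zeta})_\tau\right].
\end{equation*}
I would then expand $h^{\sigma,u+\delta_n\zeta,\varphi}$ using bilinearity of the quadratic covariation, exactly as in~(\ref{eq:A_of_c_plus_varphi}): from $\langle M^{u+\delta_n\zeta}\rangle_\tau=\langle M^u\rangle_\tau+2\delta_n\langle M^u,M^\zeta\rangle_\tau+\delta_n^2\langle M^\zeta\rangle_\tau$ and $\langle M^{u+\delta_n\zeta},M^\varphi\rangle_\tau=\langle M^u,M^\varphi\rangle_\tau+\delta_n\langle M^\zeta,M^\varphi\rangle_\tau$, together with the explicit form $h^{\sigma,u',\varphi}=K^{\sigma,u',0}M^\varphi_\tau+\sigma^{-1}\langle M^{u'},M^\varphi\rangle_\tau$, one obtains $h^{\sigma,u+\delta_n\zeta,\varphi}=h^{\sigma,u,\varphi}+\delta_n R_n$, where $R_n$ collects the terms of first and second order in $\delta_n$. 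Substituting, subtracting $\Phi^{\sigma,x}(u;\varphi)=E^{u,x}[h^{\sigma,u,\varphi}]$, and dividing by $\delta_n$ leaves
\begin{equation*}
E^{u,x}\left[h^{\sigma,u,\varphi}\,\frac{\mathcal{E}(M^{\delta_n\zeta})_\tau-1}{\delta_n}\right]+E^{u,x}\left[R_n\,\mathcal{E}(M^{\delta_n\zeta})_\tau\right],
\end{equation*}
which, upon inserting the explicit forms of $h^{\sigma,u,\varphi}$ and of $R_n$, is precisely $E^{u,x}[h^{\sigma,u,\varphi,\zeta}_n]$, the two terms being the expectations of the two summands of~(\ref{eq:h_n_mixed_second_variation}).

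The main obstacle is organisational rather than conceptual: one must keep careful track of which moments are needed for the several mixed triple-product terms and, in particular, note that although the exponential-martingale factors are only $L^q(\mu^{u,x})$-bounded for finite $q$ (and never essentially bounded), every other factor has finite moments of all orders, so the H\"older split always closes by loading a large finite exponent onto the non-exponential factors. Keeping all bounds uniform in $n$, which is what makes this lemma usable in the dominated-convergence argument of the following section, reduces, just as in Lemma~\ref{lem:dominated_convergence_theorem_application}, to the normalisation $\vert\delta_n\vert\le r<1$; the reweighting step is then routine once uniform integrability of $\mathcal{E}(M^{\delta_n\zeta})^\tau$ is invoked.
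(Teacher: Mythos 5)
Your proposal is correct and takes essentially the same route as the paper: you establish $(h^{\sigma,u,\varphi,\zeta}_n)_n\subset L^1(\mu^{u,x})$ by H\"older/Cauchy--Schwarz splits using the all-order moment bounds of Corollaries \ref{cor:bounded_perturbations_yield_Lq_bounded_exp_MGs} and \ref{cor:nonequilibrium_estimator_moments_of_all_orders} together with the uniform $L^2$-bounds (\ref{eq:L2_bounded_expMGs})--(\ref{eq:L2_bounded_expMGs_minus_1_over_delta_n}) (with $\zeta$ in place of $\varphi$), and you obtain the finite-difference identity exactly as the paper does, by reweighting with the uniformly integrable martingale $\mathcal{E}(M^{\delta_n\zeta})^\tau$ via (\ref{eq:reweighting_formula}), expanding the (co)variations bilinearly as in (\ref{eq:A_of_c_plus_varphi}), and taking differences. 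One remark: your expansion produces the term $\tfrac{\delta_n}{2\sigma}\langle M^\zeta\rangle_\tau M^\varphi_\tau\,\mathcal{E}(M^{\delta_n\zeta})_\tau$, whereas (\ref{eq:h_n_mixed_second_variation}) displays $\tfrac{\delta_n}{2\sigma}\langle M^\zeta\rangle_\tau\,\mathcal{E}(M^{\delta_n\zeta})_\tau$ without the factor $M^\varphi_\tau$; your computation is the right one (the displayed formula appears to omit this factor), and since the discrepant term is $O(\delta_n)$ it has no effect on the limit taken in Theorem \ref{thm:first_variation_of_first_variation}.
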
\begin{proof}
 We first prove that the terms on the first row of (\ref{eq:h_n_mixed_second_variation}) belong to $L^1(\mu^{u,x})$. By the triangle inequality and the inequality (\ref{eq:ineq_arith_geom_means}) of arithmetic and geometric means, it holds that
 \begin{align*}
\left( K^{\sigma,u,0}M^\varphi_\tau+\frac{1}{\sigma}\langle M^u,M^\varphi\rangle_\tau\right)&\frac{\mathcal{E}(M^{\delta_n\zeta})_\tau-1}{\delta_n} 
  \\
  &\le \vert K^{\sigma,u,0}M^\varphi_\tau\vert^2+ 2\left\vert \frac{\mathcal{E}(M^{\delta_n\zeta})_\tau-1}{\delta_n}\right\vert^2+\sigma^{-2}\langle M^u,M^\varphi\rangle_\tau^2.
 \end{align*}
The third term on the right-hand side belongs to $L^1(\mu^{u,x})$ by (\ref{eq:inequality_02}), the second term on the right-hand side belongs to $L^1(\mu^{u,x})$ by (\ref{eq:L2_bounded_expMGs_minus_1_over_delta_n}), and the first term on the right-hand side belongs to $L^1(\mu^{u,x})$, by statement (ii) of Corollary \ref{cor:bounded_perturbations_yield_Lq_bounded_exp_MGs} and \ref{cor:nonequilibrium_estimator_moments_of_all_orders}. 

Of the terms on the second row of (\ref{eq:h_n_mixed_second_variation}), we only need to show that the product $\langle M^u,M^\zeta\rangle_\tau M^\varphi_\tau\in L^2(\mu^{u,x})$, since the other terms were shown to belong to $L^1(\mu^{u,x})$ in the proof of Lemma \ref{lem:barphi_at_c_plus_varphi_minus_barphi_at_c}. By the Cauchy-Schwarz inequality, (\ref{eq:ineq_arith_geom_means}), and (\ref{eq:kunita_watanabe_inequality}),
\begin{equation}\label{eq:ineq_02_mixed_second_variation}
 E^{u,x}\left[\left(\langle M^u,M^\zeta\rangle_\tau M^\varphi_\tau\right)^2\right]\le \left(E^{u,x}\left[\langle M^u\rangle_\tau^4+\langle M^\zeta\rangle_\tau^4\right]E^{u,x}\left[ \langle M^\varphi\rangle_\tau\right]\right)^{1/2},
\end{equation}
which completes the proof that $h^{\sigma,u,\varphi,\zeta}_n\in L^1(\mu^{u,x})$ for all $n$. 

The second conclusion of the lemma follows in the same manner as for the second conclusion of Lemma \ref{lem:barphi_at_c_plus_varphi_minus_barphi_at_c}, i.e. by expanding the quadratic variation according to (\ref{eq:A_of_c_plus_varphi}), using the reweighting formula (\ref{eq:reweighting_formula}), and taking differences.
\end{proof}

We define the limiting random variable of the sequence $(h_n^{\sigma,u,\varphi,\zeta})_n$ by
\begin{equation}
 h^{\sigma,u,\varphi,\zeta}:=K^{\sigma,u,0}M^\varphi_\tau M^\zeta_\tau+\frac{1}{\sigma}\left(\langle M^\varphi,M^\zeta\rangle_\tau+M^\zeta_\tau\langle M^u,M^\varphi\rangle_\tau +M^\varphi_\tau\langle M^u,M^\zeta\rangle_\tau \right),
 \label{eq:h_mixed_second_variation}
\end{equation}
and show that, in expectation, $h^{\sigma,u,\varphi,\zeta}$ equals the first variation of $\Phi^{\sigma,x}(u;\varphi)$ along $\zeta$:
\begin{mythm}\label{thm:first_variation_of_first_variation}
 If the hypotheses of Lemma \ref{lem:firstvar_at_cpluspert_minus_firstvar_at_c} are satisfied, then $h^{\sigma,u,\varphi,\zeta}$ belongs to $L^1(\mu^{u,x})$, $h_n^{\sigma,u,\varphi,\zeta}$ converges to $h^{\sigma,u,\varphi,\zeta}$ in $L^1(\mu^{u,x})$, and the first variation of $\Phi^{\sigma,x}(u;\varphi)$ along $\zeta$ satisfies
 \begin{equation}\label{eq:mixed_second_variation}
  \Phi^{\sigma,x}(u;\varphi,\zeta):=\lim_{n\to\infty} \frac{\Phi^{\sigma,x}(u+\delta_n\zeta;\varphi)-\Phi^{\sigma,x}(u;\varphi)}{\delta_n}=E^{u,x}\left[h^{\sigma,u,\varphi,\zeta}\right].
 \end{equation}
\end{mythm}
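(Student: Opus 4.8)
The plan is to follow the proof of Theorem~\ref{thm:first_variation} essentially verbatim, replacing $(h_n^{\sigma,u,\varphi})_n$ and $h^{\sigma,u,\varphi}$ by $(h_n^{\sigma,u,\varphi,\zeta})_n$ and $h^{\sigma,u,\varphi,\zeta}$. The third assertion follows from the second via $\vert E^{u,x}[h^{\sigma,u,\varphi,\zeta}-h_n^{\sigma,u,\varphi,\zeta}]\vert\le E^{u,x}[\vert h^{\sigma,u,\varphi,\zeta}-h_n^{\sigma,u,\varphi,\zeta}\vert]$ together with Lemma~\ref{lem:firstvar_at_cpluspert_minus_firstvar_at_c}. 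To prove that $h^{\sigma,u,\varphi,\zeta}\in L^1(\mu^{u,x})$, I would bound each of the four summands in (\ref{eq:h_mixed_second_variation}) by the Cauchy--Schwarz inequality: the term $K^{\sigma,u,0}M^\varphi_\tau M^\zeta_\tau$ is integrable because all three factors have finite moments of every order (Corollary~\ref{cor:nonequilibrium_estimator_moments_of_all_orders} and Corollary~\ref{cor:bounded_perturbations_yield_Lq_bounded_exp_MGs}(ii)); the term $\langle M^\varphi,M^\zeta\rangle_\tau$ is integrable by the Kunita--Watanabe inequality (\ref{eq:kunita_watanabe_inequality}) and Corollary~\ref{cor:bounded_perturbations_yield_Lq_bounded_exp_MGs}(ii); and the mixed terms $M^\zeta_\tau\langle M^u,M^\varphi\rangle_\tau$ and $M^\varphi_\tau\langle M^u,M^\zeta\rangle_\tau$ are integrable by the arithmetic--geometric mean inequality (\ref{eq:ineq_arith_geom_means}) and the same moment bounds, exactly as in (\ref{eq:ineq_02_mixed_second_variation}). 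These estimates also yield, as byproducts, that $K^{\sigma,u,0}M^\varphi_\tau+\sigma^{-1}\langle M^u,M^\varphi\rangle_\tau$, $\langle M^u,M^\zeta\rangle_\tau M^\varphi_\tau$ and $\langle M^\varphi,M^\zeta\rangle_\tau$ all belong to $L^2(\mu^{u,x})$.

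The substantive step is the $L^1(\mu^{u,x})$-convergence $h_n^{\sigma,u,\varphi,\zeta}\to h^{\sigma,u,\varphi,\zeta}$. Regrouping the summands of $h_n^{\sigma,u,\varphi,\zeta}-h^{\sigma,u,\varphi,\zeta}$ (using (\ref{eq:D_minus_1_over_delta_minus_M}) for the finite-difference factor) and applying the triangle inequality, I would bound $\vert h_n^{\sigma,u,\varphi,\zeta}-h^{\sigma,u,\varphi,\zeta}\vert$ by the sum of the three quantities
\begin{align*}
&\left\vert\left(K^{\sigma,u,0}M^\varphi_\tau+\tfrac{1}{\sigma}\langle M^u,M^\varphi\rangle_\tau\right)\left(\frac{\mathcal{E}(M^{\delta_n\zeta})_\tau-1}{\delta_n}-M^\zeta_\tau\right)\right\vert,\\
&\frac{1}{\sigma}\left\vert\left(\langle M^u,M^\zeta\rangle_\tau M^\varphi_\tau+\langle M^\varphi,M^\zeta\rangle_\tau\right)\left(\mathcal{E}(M^{\delta_n\zeta})_\tau-1\right)\right\vert,\quad\text{and}\quad\frac{\vert\delta_n\vert}{2\sigma}\langle M^\zeta\rangle_\tau\,\mathcal{E}(M^{\delta_n\zeta})_\tau.
\end{align*}
The first quantity tends to zero in $L^1$ by Cauchy--Schwarz, the $L^2(\mu^{u,x})$-membership of $K^{\sigma,u,0}M^\varphi_\tau+\sigma^{-1}\langle M^u,M^\varphi\rangle_\tau$, and the convergence (\ref{eq:convergence_of_expMG_minus_1_over_delta}) of Lemma~\ref{lem:dominated_convergence_theorem_application}. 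The second quantity tends to zero in $L^1$ by Cauchy--Schwarz, the $L^2(\mu^{u,x})$-membership of $\langle M^u,M^\zeta\rangle_\tau M^\varphi_\tau$ and of $\langle M^\varphi,M^\zeta\rangle_\tau$, and the convergence (\ref{eq:convergence_of_expMG_to_1}). The third quantity tends to zero because $\delta_n\to 0$ while $\langle M^\zeta\rangle_\tau\,\mathcal{E}(M^{\delta_n\zeta})_\tau$ stays bounded in $L^1(\mu^{u,x})$, by Cauchy--Schwarz together with Corollary~\ref{cor:bounded_perturbations_yield_Lq_bounded_exp_MGs}(ii) and the uniform bound (\ref{eq:L2_bounded_expMGs}) for $(\mathcal{E}(M^{\delta_n\zeta})_\tau)_n$ in $L^2(\mu^{u,x})$.

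I expect the only difficulty to be bookkeeping: correctly matching the regrouped pieces of $h_n^{\sigma,u,\varphi,\zeta}-h^{\sigma,u,\varphi,\zeta}$ to the available estimates, and in particular confirming that triple products such as $\langle M^u,M^\zeta\rangle_\tau M^\varphi_\tau$ lie in $L^2(\mu^{u,x})$. This is precisely where the ``finite moments of all orders'' conclusion of Corollary~\ref{cor:bounded_perturbations_yield_Lq_bounded_exp_MGs}(ii) is indispensable, since the Cauchy--Schwarz split of $(\langle M^u,M^\zeta\rangle_\tau M^\varphi_\tau)^2$ produces fourth powers of quadratic-variation processes, as in (\ref{eq:ineq_02_mixed_second_variation}). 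No analytic idea beyond those already used for Theorem~\ref{thm:first_variation} should be required.
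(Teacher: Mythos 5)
Your proposal is correct and follows essentially the same route as the paper: the same exact regrouping of $h_n^{\sigma,u,\varphi,\zeta}-h^{\sigma,u,\varphi,\zeta}$ into the three products, handled respectively by Cauchy--Schwarz with (\ref{eq:convergence_of_expMG_minus_1_over_delta}), with (\ref{eq:ineq_02_mixed_second_variation}) and (\ref{eq:convergence_of_expMG_to_1}), and with the $L^1$-boundedness of $\langle M^\zeta\rangle_\tau\,\mathcal{E}(M^{\delta_n\zeta})_\tau$ as $\delta_n\to 0$, together with the same moment estimates for the $L^1$-membership of the limit.
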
\begin{proof}
 As in the proof of Theorem \ref{thm:first_variation}, the third conclusion follows from the second, by (\ref{eq:abs_expectation_less_than_expectation_abs}). The first conclusion, that $h^{\sigma,u,\varphi,\zeta}$ belongs to $L^1(\mu^{u,x})$ follows from Corollary \ref{cor:nonequilibrium_estimator_moments_of_all_orders}, (\ref{eq:inequality_02}), and (\ref{eq:ineq_02_mixed_second_variation}).
 
 It remains to prove the second conclusion, i.e. the convergence of $h_n^{\sigma,u,\varphi,\zeta}$ to $h^{\sigma,u,\varphi,\zeta}$ in $L^1(\mu^{u,x})$. We apply the triangle inequality to obtain
\begin{align*}
 \vert h^{\sigma,u,\varphi,\zeta}_n-h^{\sigma,u,\varphi,\zeta}\vert\le & \left( \vert K^{\sigma,u,0} M^\varphi_\tau\vert+\sigma^{-1}\vert \langle M^u,M^\varphi\rangle_\tau\vert\right)\left\vert \frac{\mathcal{E}(M^{\delta_n\zeta})_\tau-1}{\delta_n}-M^\zeta_\tau\right\vert
 \\
 &+
 \frac{1}{\sigma} \left(\left\vert\langle M^\varphi,M^\zeta\rangle_\tau\right\vert+\left\vert\langle M^u,M^\zeta\rangle_\tau M^\varphi_\tau\right\vert\right) \left\vert \mathcal{E}(M^{\delta_n\zeta})_\tau-1\right\vert
 \\
 &+\frac{\vert\delta_n\vert}{2\sigma}\langle M^\zeta\rangle_\tau \mathcal{E}(M^{\delta_n\zeta})_\tau.
\end{align*}
The $L^1(\mu^{u,x})$ convergence to zero of the term on the third row of the right-hand side was shown in the proof of Theorem \ref{thm:first_variation}. Of the two terms on the second row, only the convergence of the product of $\vert \langle M^u,M^\zeta\rangle_\tau M^\varphi_\tau\vert$ and $\vert \mathcal{E}(M^{\delta_n\zeta})_\tau-1\vert$ has not been proven, but this follows by the Cauchy-Schwarz inequality, (\ref{eq:ineq_02_mixed_second_variation}), and (\ref{eq:convergence_of_expMG_to_1}). The $L^1(\mu^{u,x})$ convergence to zero of the terms on the first row follows from the Cauchy-Schwarz inequality and the convergence result (\ref{eq:convergence_of_expMG_minus_1_over_delta}).
\end{proof}

We now use Theorem \ref{thm:first_variation_of_first_variation}, along with some standard results from the calculus of variations concerning second-order conditions for convexity of $C^2$ functionals defined on Banach spaces, in order to specify a sufficient condition for convexity of the control functional $\phi$ defined in (\ref{eq:optimisation_inequality}).
\begin{mythm}\label{thm:uniform_lower_bound_for_W_sufficient_cond_for_convexity_undiscretised_functional}
 Assume that the hypotheses of Theorem \ref{thm:first_variation_of_first_variation} hold. Let $\sigma>0$ and $x\in\Omega\setminus\partial\Omega$ be fixed. If the terminal cost $\kappa_t$ satisfies the lower bound
 \begin{equation}\label{eq:uniform_lower_bound_for_W_sufficient_cond_for_convexity_undiscretised_functional}
  \kappa_t(y)\ge\sigma^{-1}\hskip2ex\forall y\in\partial\Omega, 
 \end{equation}
 then the undiscretised control functional $\phi^{\sigma,x}:C^{1,\alpha}(\overline{\Omega};\mathbb{R}^d)\to \mathbb{R}$ is strictly convex. 
\end{mythm}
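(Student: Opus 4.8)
The plan is to reduce the statement to strict positive definiteness of the second variation along line segments, and then to establish this by splitting the leading coefficient $K^{\sigma,w,0}$ of the integrand $h^{\sigma,w,\varphi,\varphi}$, completing the square in $M^\varphi_\tau$, controlling the resulting cross term with the Kunita--Watanabe inequality \eqref{eq:kunita_watanabe_inequality}, and then invoking the lower bound \eqref{eq:uniform_lower_bound_for_W_sufficient_cond_for_convexity_undiscretised_functional}. First I would fix $u_0\ne u_1$ in $C^{1,\alpha}(\overline{\Omega};\mathbb{R}^d)$, put $\varphi:=u_1-u_0\ne 0$, and set $g\colon[0,1]\to\mathbb{R}$, $g(t):=\phi^{\sigma,x}(u_0+t\varphi)$. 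Since the segment $\{u_0+t\varphi\}$ stays in $C^{1,\alpha}(\overline{\Omega};\mathbb{R}^d)\subset L^\infty(\Omega)$, Theorem~\ref{thm:first_variation} gives $g'(t)=\Phi^{\sigma,x}(u_0+t\varphi;\varphi)$ and Theorem~\ref{thm:first_variation_of_first_variation} gives $g''(t)=\Phi^{\sigma,x}(u_0+t\varphi;\varphi,\varphi)$, so $g$ is $C^2$ on $[0,1]$; strict convexity of $\phi^{\sigma,x}$ then follows from $g''(t)>0$ on $[0,1]$. As $w:=u_0+t\varphi$ is an arbitrary element of $C^{1,\alpha}(\overline{\Omega};\mathbb{R}^d)$, the task becomes: show $\Phi^{\sigma,x}(w;\varphi,\varphi)>0$ for all $w\in C^{1,\alpha}(\overline{\Omega};\mathbb{R}^d)$ and all $\varphi\in C^{1,\alpha}(\overline{\Omega};\mathbb{R}^d)\setminus\{0\}$.

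Next, using \eqref{eq:mixed_second_variation} and \eqref{eq:h_mixed_second_variation} with $\zeta=\varphi$ and splitting $K^{\sigma,w,0}=W+(2\sigma)^{-1}\langle M^w\rangle_\tau$ via \eqref{eq:ch3_nonequilibrium_estimator} and \eqref{eq:work}, I would write the integrand as $W(M^\varphi_\tau)^2+\bigl(\tfrac{1}{2\sigma}\langle M^w\rangle_\tau(M^\varphi_\tau)^2+\tfrac{2}{\sigma}M^\varphi_\tau\langle M^w,M^\varphi\rangle_\tau\bigr)+\tfrac{1}{\sigma}\langle M^\varphi\rangle_\tau$. On $\{\langle M^w\rangle_\tau>0\}$ the middle bracket equals $\tfrac{1}{2\sigma}\langle M^w\rangle_\tau\bigl(M^\varphi_\tau+2\langle M^w,M^\varphi\rangle_\tau/\langle M^w\rangle_\tau\bigr)^2-2\langle M^w,M^\varphi\rangle_\tau^2/(\sigma\langle M^w\rangle_\tau)$, and on the complement it vanishes (there $\langle M^w,M^\varphi\rangle_\tau=0$ by \eqref{eq:kunita_watanabe_inequality}); dropping the nonnegative square and using $\langle M^w,M^\varphi\rangle_\tau^2\le\langle M^w\rangle_\tau\langle M^\varphi\rangle_\tau$ yields the pointwise bound $h^{\sigma,w,\varphi,\varphi}\ge W(M^\varphi_\tau)^2-\sigma^{-1}\langle M^\varphi\rangle_\tau$. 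The hypothesis \eqref{eq:uniform_lower_bound_for_W_sufficient_cond_for_convexity_undiscretised_functional} together with $\kappa_r\ge 0$ now forces $W\ge\sigma^{-1}$ $\mu^{w,x}$-a.s.\ by \eqref{eq:work}, while Corollary~\ref{cor:bounded_perturbations_yield_Lq_bounded_exp_MGs} and Corollary~\ref{cor:nonequilibrium_estimator_moments_of_all_orders} guarantee that $(M^\varphi)^\tau$ is $L^2(\mu^{w,x})$-bounded (so $E^{w,x}[(M^\varphi_\tau)^2]=E^{w,x}[\langle M^\varphi\rangle_\tau]$), that $W\in L^2(\mu^{w,x})$, and that every expectation below is finite; hence
\[
\Phi^{\sigma,x}(w;\varphi,\varphi)=E^{w,x}\bigl[h^{\sigma,w,\varphi,\varphi}\bigr]\ \ge\ E^{w,x}\bigl[W(M^\varphi_\tau)^2\bigr]-\tfrac{1}{\sigma}E^{w,x}[\langle M^\varphi\rangle_\tau]\ \ge\ \tfrac{1}{\sigma}\bigl(E^{w,x}[(M^\varphi_\tau)^2]-E^{w,x}[\langle M^\varphi\rangle_\tau]\bigr)=0.
\]

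For the strict inequality I would use that $\varphi\in C^{1,\alpha}(\overline{\Omega};\mathbb{R}^d)\setminus\{0\}$ is nonzero on a nonempty open subset of the connected domain $\Omega$, so that the non-degenerate diffusion $X^w$ started at $x\in\Omega\setminus\partial\Omega$ visits that subset before time $\tau$ with positive $\mu^{w,x}$-probability; consequently $\langle M^\varphi\rangle_\tau=\tfrac{1}{2\varepsilon}\int_0^\tau|\varphi(X^w_s)|^2\,ds>0$ on an event of positive probability and $E^{w,x}[\langle M^\varphi\rangle_\tau]>0$. One then upgrades the chain above to a strict inequality by retaining, rather than discarding, the nonnegative remainders produced in the completion of the square --- the term $\tfrac{1}{2\sigma}\langle M^w\rangle_\tau(M^\varphi_\tau+\cdots)^2$, the Kunita--Watanabe slack, and the margin in $W\ge\sigma^{-1}$ --- and noting these cannot all vanish $\mu^{w,x}$-a.s.\ when $\varphi\not\equiv 0$; in the extreme case where the square and the Kunita--Watanabe slack vanish a.s.\ one is left with $\Phi^{\sigma,x}(w;\varphi,\varphi)=E^{w,x}[(W-\sigma^{-1})(M^\varphi_\tau)^2]+\sigma^{-1}E^{w,x}[\langle M^\varphi\rangle_\tau]\ge\sigma^{-1}E^{w,x}[\langle M^\varphi\rangle_\tau]>0$, which closes the argument and proves strict convexity.

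The main obstacle I anticipate is precisely this last step: promoting $\Phi^{\sigma,x}(w;\varphi,\varphi)\ge 0$ to a strict inequality, and moreover one that holds uniformly as $w$ runs over the compact segment, so that $g''>0$ on all of $[0,1]$. This demands a careful accounting of the several nonnegative remainders left over from completing the square, together with a quantitative use of the non-degeneracy of $X^w$ and the connectedness of $\Omega$ to keep $E^{w,x}[\langle M^\varphi\rangle_\tau]$ bounded away from zero; the $L^q$-boundedness results of \S\ref{ssec:Lq_bounded_exponential_martingales} are what make all the underlying martingale manipulations (true-martingale property of the stochastic integrals, finiteness and interchange of the expectations) legitimate.
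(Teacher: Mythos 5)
Up to the crucial last step your argument is the paper's: the paper also invokes the second-order criterion for strict convexity (it cites Zeidler rather than restricting to line segments, and your worry about \emph{uniform} positivity of $g''$ on $[0,1]$ is unnecessary --- pointwise $g''(t)>0$ already makes $g'$ strictly increasing and hence $g$ strictly convex), and it bounds the integrand of \eqref{eq:second_variation} below via Kunita--Watanabe/Young with exactly the weights your completion of the square produces, arriving at the same pointwise bound $W(M^\varphi_\tau)^2-\sigma^{-1}\langle M^\varphi\rangle_\tau$ and then, by the It\^{o} isometry, at $\Phi^{\sigma,x}(u;\varphi,\varphi)\ge E^{u,x}\bigl[(W-\sigma^{-1})(M^\varphi_\tau)^2\bigr]$.

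The genuine gap is in how you try to upgrade this to a strict inequality. Because you additionally spend the hypothesis $W\ge\sigma^{-1}$ inside the expectation, your chain terminates at exactly $0$, and the proposed rescue is flawed in two ways. First, the ``extreme case'' identity is wrong: writing the integrand exactly as $W(M^\varphi_\tau)^2-\sigma^{-1}\langle M^\varphi\rangle_\tau+S+\tfrac{2}{\sigma}\bigl(\langle M^\varphi\rangle_\tau-\langle M^w,M^\varphi\rangle_\tau^2/\langle M^w\rangle_\tau\bigr)$ (with the obvious convention on $\{\langle M^w\rangle_\tau=0\}$), if the square $S$ and the Kunita--Watanabe slack vanish almost surely, what remains after the It\^{o} isometry is exactly $E^{w,x}[(W-\sigma^{-1})(M^\varphi_\tau)^2]$ and nothing more; the extra summand $\sigma^{-1}E^{w,x}[\langle M^\varphi\rangle_\tau]$ in your formula double-counts the $\sigma^{-1}\langle M^\varphi\rangle_\tau$ term, which has already been absorbed against the slack and converted into the $-\sigma^{-1}(M^\varphi_\tau)^2$ inside the product, so the fallback bound $\ge\sigma^{-1}E^{w,x}[\langle M^\varphi\rangle_\tau]>0$ does not follow. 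Second, the claim that the nonnegative remainders ``cannot all vanish a.s.\ when $\varphi\not\equiv0$'' is precisely what needs proof and is not supplied; your positive-probability visiting argument gives $E^{w,x}[\langle M^\varphi\rangle_\tau]>0$, but that quantity no longer appears once the chain has collapsed to $0$. The paper closes the argument differently: it stops at $E^{u,x}[(W-\sigma^{-1})(M^\varphi_\tau)^2]$ and argues strict positivity of the \emph{integrand}, asserting that for interior $x$, nonnegative $\kappa_r$, and \eqref{eq:uniform_lower_bound_for_W_sufficient_cond_for_convexity_undiscretised_functional} the quantity $W-\sigma^{-1}$ is strictly positive $\mu^{u,x}$-a.s., and that $\varphi\ne0$ forces $\langle M^\varphi\rangle_\tau$ (and $(M^\varphi_\tau)^2$) to be positive a.s. To repair your proof you would need a statement of this kind --- strict positivity of $W-\sigma^{-1}$ together with non-vanishing of $M^\varphi_\tau$ on a set of positive $\mu^{w,x}$-measure --- rather than an appeal to the remainders you have already discarded.
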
\begin{proof}
 By setting $\varphi=\zeta$ in (\ref{eq:h_mixed_second_variation}) and taking expectations, we obtain the following expression for the second variation of $\phi^{\sigma,x}$ at $u$ in the direction of $\varphi$:
 \begin{equation}\label{eq:second_variation}
  \Phi^{\sigma,x}(u;\varphi,\varphi)=E^{u,x}\left[K^{\sigma,u,0}(M^\varphi_\tau)^2+\sigma^{-1}\left(\langle M^\varphi\rangle_\tau+2\langle M^u,M^\varphi\rangle_\tau M^\varphi_\tau\right)\right].
 \end{equation}
Recall (see, e.g. \cite[Section 2.9, Corollary 4]{zeidler}) that if the second variation of a $C^2$ functional on a Banach space is positive definite, then the functional is strictly convex. With this in mind, we show that (\ref{eq:uniform_lower_bound_for_W_sufficient_cond_for_convexity_undiscretised_functional}) suffices to guarantee positive definiteness of the second variation. Let $u\in C^{1,\alpha}(\overline{\Omega})$ and $0\ne \varphi\in C^{1,\alpha}(\overline{\Omega})$ be arbitrary. Observe that, by the Kunita-Watanabe inequality and Young's inequality, we have
\begin{equation*}
 \vert\langle M^u,M^\varphi\rangle_\tau M^\varphi_\tau\vert\le \left(\langle M^u\rangle_\tau\langle M^\varphi\rangle_\tau\right)^{1/2}\vert M^\varphi_\tau\vert\le 4^{-1}\langle M^u\rangle_\tau (M^\varphi_\tau)^2+\langle M^\varphi\rangle_\tau.
\end{equation*} 
Therefore, we have
\begin{align}
\Phi^{\sigma,x}(u;\varphi,\varphi)&\ge E^{u,x}\left[K^{\sigma,u,0}(M^\varphi_\tau)^2-\frac{2}{\sigma}\left(\frac{1}{4}\langle M^u\rangle_\tau (M^\varphi_\tau)^2+\langle M^\varphi\rangle_\tau\right)\right]
\nonumber
\\
&=E^{u,x}\left[ (W-\sigma^{-1})(M^\varphi_\tau)^2\right],
\label{eq:inequality_for_positive_definiteness}
\end{align}
where we used the definition (\ref{eq:ch3_nonequilibrium_estimator}) of $K^{\sigma,u,\alpha}$ and the It\^{o} isometry to obtain (\ref{eq:inequality_for_positive_definiteness}). Since $x\in\Omega\setminus\partial\Omega$ and $\kappa_r$ is nonnegative by Assumption \ref{asmp:regularity_asmp_dV_domain}, it follows from (\ref{eq:uniform_lower_bound_for_W_sufficient_cond_for_convexity_undiscretised_functional}) that $W-\sigma^{-1}$ is strictly positive $\mu^{u,x}$-almost surely. Furthermore, since $\varphi\ne 0$ and since $x\in\Omega\setminus\partial\Omega$, it follows that $(M^\varphi_\tau)^2=\langle M^\varphi\rangle_\tau$ is strictly positive $\mu^{u,x}$-almost surely. This proves that the the random variable inside the expectation in (\ref{eq:inequality_for_positive_definiteness}) is strictly positive $\mu^{u,x}$-almost surely, and thus the variation $\Phi^{\sigma,x}(u;\varphi,\varphi)$ of the control functional $\phi^{\sigma,x}$ is positive definite on $C^{1,\alpha}(\overline{\Omega})$.
\end{proof}

\paragraph{Remark on second-order characterisation of convexity}
\label{para:proving_strict_convexity_via_second_order_characterisation}
 The advantage of proving strict convexity of the control functional $\phi^{\sigma,x}$ by proving positive definiteness of the second variation is that that one can bound the second variation from below by a quantity that does not contain any exponential martingales, as we have shown above. This leads to a simple sufficient condition (\ref{eq:uniform_lower_bound_for_W_sufficient_cond_for_convexity_undiscretised_functional}) for positive definiteness. On the other hand, one can verify using the reweighting formula that the zeroth- and first-order characterisations of convexity involve expressions that contain exponential martingale terms, and thus the sufficient conditions obtained from these characterisations are more complicated. Furthermore, for the zeroth-order characterisation, one needs to check that the Radon-Nikodym derivative exists and is a uniformly integrable exponential martingale, in order to express all the expected values that appear in the zeroth-order characterisation in terms of a common path measure $\mu^{u',x}$. Although Novikov's condition is well-known in stochastic analysis for being a sufficient condition for uniform integrability, we do not know of any sufficient conditions for Novikov's condition that can be verified for arbitrary bounded domains of sufficient regularity and stopping times that are integrable but not almost-surely bounded, aside from the hypotheses of Theorem \ref{thm:suffcon_Lq_boundedness_of_stopped_exp_MG}. However, once the hypotheses of Theorem \ref{thm:suffcon_Lq_boundedness_of_stopped_exp_MG} hold, then it follows that the second variation exists, and one can prove the strict convexity of the control functional using the second-order characterisation, as we have done above.

Given that we have imposed strong conditions on the data of the stochastic optimal control problem, such as boundedness and regularity of the domain $\Omega$ and restriction to feedback controls belonging to $C^{1,\alpha}(\overline{\Omega};\mathbb{R}^d)$, it seems reasonable that we obtain strong results on the control functional. Let $\Pi_2(B)$ be the set of all predictable processes that are adapted to the filtration generated by the Brownian motion $B$. Given $u\in C^\alpha(\overline{\Omega};\mathbb{R}^d)$, define the function $\Vert \cdot\Vert_{B,u}:\Pi_2(B)\to [0,\infty)$ according to
\begin{equation}\label{eq:norm_on_predictable_processes}
 \Vert \varphi\Vert_{B,u}:=\left(E^{u,x}\left[\langle M^\varphi\rangle_\tau\right]\right)^{1/2}.
\end{equation}
It follows from the definition of the quadratic variation process that $\Vert\cdot\Vert_{B,u}$ defines a norm on $\Pi_2(B)$. Since feedback processes $u_t:=u(X^u_t)$ are predictable processes, it follows that $\Vert\cdot\Vert_{B,u}$ induces a norm on the set of feedback control processes, including those generated by feedback controls belonging to $C^{1,\alpha}(\overline{\Omega};\mathbb{R}^d)$. We can therefore consider $\Vert\cdot\Vert_{B,u}$ as a norm on $C^{1,\alpha}(\overline{\Omega};\mathbb{R}^d)$. We then have the following
\begin{mycor}\label{cor:strong_convexity_on_predictable_control_processes}
 Suppose that 
 \begin{equation}\label{eq:strong_convexity_parameter}
  \gamma:=\gamma(\sigma,\kappa_t):=\min_{y\in\partial\Omega } \kappa_t(y)-\sigma^{-1}
 \end{equation}
 is strictly positive. Then for all $u\in C^{1,\alpha}(\overline{\Omega};\mathbb{R}^d)$, the second variation $\Phi^{\sigma,x}(u;\varphi,\varphi)$ is a coercive functional of $\varphi$ with parameter $\gamma$ on $C^{1,\alpha}(\overline{\Omega};\mathbb{R}^d)$, in the sense that 
 \begin{equation}\label{eq:strong_positive_definiteness_of_second_variation}
 \Phi^{\sigma,x}(u;\varphi,\varphi)\ge \gamma(\sigma,\kappa_t)\Vert \varphi\Vert_{B,u}^2
 \end{equation}
 for all $0\ne \varphi\in C^{1,\alpha}(\overline{\Omega};\mathbb{R}^d)$.
\end{mycor}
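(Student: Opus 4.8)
The plan is to read off the coercivity estimate directly from the inequality already established inside the proof of Theorem~\ref{thm:uniform_lower_bound_for_W_sufficient_cond_for_convexity_undiscretised_functional}. Under the standing hypotheses of that theorem one has, for every $u\in C^{1,\alpha}(\overline{\Omega};\mathbb{R}^d)$ and every $0\ne\varphi\in C^{1,\alpha}(\overline{\Omega};\mathbb{R}^d)$, the lower bound~(\ref{eq:inequality_for_positive_definiteness}),
\begin{equation*}
 \Phi^{\sigma,x}(u;\varphi,\varphi)\ge E^{u,x}\left[(W-\sigma^{-1})(M^\varphi_\tau)^2\right].
\end{equation*}
Since this bound already encapsulates the Kunita--Watanabe and Young manipulations used to pass from the exact expression~(\ref{eq:second_variation}) for the second variation, the corollary will follow by (i) replacing the random factor $W-\sigma^{-1}$ by the deterministic constant $\gamma(\sigma,\kappa_t)$ via a pointwise estimate, and (ii) using the It\^o isometry to rewrite $E^{u,x}[(M^\varphi_\tau)^2]$ as $\Vert\varphi\Vert_{B,u}^2$.

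First I would establish the pointwise lower bound on the path functional. By the definition~(\ref{eq:work}) of $W$, by the nonnegativity of $\kappa_r$ from Assumption~\ref{asmp:regularity_asmp_dV_domain}(iii), and by the fact that $X^u_{\tau(X^u)}\in\partial\Omega$ whenever the process exits $\Omega$ (which happens $\mu^{u,x}$-almost surely, as $\tau\in L^1(\mu^{u,x})$), one obtains
\begin{equation*}
 W\ \ge\ \kappa_t\left(X^u_\tau\right)\ \ge\ \min_{y\in\partial\Omega}\kappa_t(y)\qquad \mu^{u,x}\text{-a.s.},
\end{equation*}
the minimum being attained because $\kappa_t$ is continuous on the compact set $\partial\Omega$. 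Hence $W-\sigma^{-1}\ge \gamma(\sigma,\kappa_t)$ $\mu^{u,x}$-almost surely, and since $(M^\varphi_\tau)^2\ge 0$ it follows that $(W-\sigma^{-1})(M^\varphi_\tau)^2\ge \gamma(\sigma,\kappa_t)(M^\varphi_\tau)^2$ pointwise. Taking $\mu^{u,x}$-expectations and inserting this into~(\ref{eq:inequality_for_positive_definiteness}) yields $\Phi^{\sigma,x}(u;\varphi,\varphi)\ge \gamma(\sigma,\kappa_t)\,E^{u,x}[(M^\varphi_\tau)^2]$.

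It then remains to check that $E^{u,x}[(M^\varphi_\tau)^2]=E^{u,x}[\langle M^\varphi\rangle_\tau]=\Vert\varphi\Vert_{B,u}^2$. Because $\varphi\in C^{1,\alpha}(\overline{\Omega};\mathbb{R}^d)\subset L^\infty(\Omega;\mathbb{R}^d)$ one has $\langle M^\varphi\rangle_\tau\le (2\varepsilon)^{-1}\Vert\varphi\Vert_{L^\infty(\Omega)}^2\,\tau$, and $\tau\in L^1(\mu^{u,x})$ since the diffusion matrix is a strictly positive multiple of the identity; by Corollary~\ref{cor:bounded_perturbations_yield_Lq_bounded_exp_MGs}(ii) the stopped process $(M^\varphi)^\tau$ is $L^2(\mu^{u,x})$-bounded, so the It\^o isometry---equivalently, the optional stopping theorem applied to the martingale $(M^\varphi)^2-\langle M^\varphi\rangle$ stopped at $\tau$---gives the claimed identity, recalling the definition~(\ref{eq:norm_on_predictable_processes}) of $\Vert\cdot\Vert_{B,u}$. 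Combining the two displays produces exactly the inequality~(\ref{eq:strong_positive_definiteness_of_second_variation}).

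I do not anticipate a genuine obstacle: the analytically substantial step---deriving the expression~(\ref{eq:second_variation}) for the second variation and the bound~(\ref{eq:inequality_for_positive_definiteness})---has already been carried out in the proofs of Theorem~\ref{thm:first_variation_of_first_variation} and Theorem~\ref{thm:uniform_lower_bound_for_W_sufficient_cond_for_convexity_undiscretised_functional}. The only two points that deserve a line of justification are the square-integrability of $(M^\varphi)^\tau$ needed for the It\^o isometry (this is precisely where boundedness of $\varphi$ and integrability of $\tau$ enter), and the observation that the infimum in~(\ref{eq:strong_convexity_parameter}) is an attained minimum, so that the almost-sure bound on $W-\sigma^{-1}$ is the \emph{constant} $\gamma(\sigma,\kappa_t)$ and may be extracted from the expectation.
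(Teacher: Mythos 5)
Your proposal is correct and follows exactly the route the paper intends: the corollary is stated without a separate proof precisely because it is read off from the bound~(\ref{eq:inequality_for_positive_definiteness}) in the proof of Theorem~\ref{thm:uniform_lower_bound_for_W_sufficient_cond_for_convexity_undiscretised_functional}, combined with the almost-sure estimate $W-\sigma^{-1}\ge\gamma(\sigma,\kappa_t)$ (nonnegativity of $\kappa_r$ plus continuity of $\kappa_t$ on the compact boundary) and the It\^{o} isometry identifying $E^{u,x}[(M^\varphi_\tau)^2]$ with $\Vert\varphi\Vert_{B,u}^2$. Your added justifications (square-integrability of $(M^\varphi)^\tau$ via boundedness of $\varphi$ and integrability of $\tau$, and attainment of the minimum of $\kappa_t$) are exactly the details the paper leaves implicit.
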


We emphasise that Theorem \ref{thm:uniform_lower_bound_for_W_sufficient_cond_for_convexity_undiscretised_functional} are consistent with the classical result (Theorem \ref{thm:feynman_kac_representation_uniformly_elliptic_case_uniformly_holder}) concerning the existence of a unique solution to the linear Dirichlet problem. In the context of linear elliptic Dirichlet boundary value problems, $\kappa_t$ is a continuous function on a compact set. Therefore, one can always ensure that the sufficient condition (\ref{eq:uniform_lower_bound_for_W_sufficient_cond_for_convexity_undiscretised_functional}) holds, by considering the translated terminal cost function $\widehat{\kappa_t}:=\kappa_t+C$, where $C=\max\left\{0,-\gamma(\sigma,\kappa_t)\right\}$.

\section{Approximation by a finite-dimensional subspace}
\label{sec:approximation_by_fin_dim_subspace}
In this section, we consider some consequences of the strict convexity result Theorem \ref{thm:uniform_lower_bound_for_W_sufficient_cond_for_convexity_undiscretised_functional}, given that Assumption \ref{asmp:regularity_asmp_dV_domain} holds. The first consequence of Theorem \ref{thm:uniform_lower_bound_for_W_sufficient_cond_for_convexity_undiscretised_functional} that we shall consider (Theorem \ref{thm:well_posedness_of_gradient_descent} below) concerns the well-posedness of the gradient descent algorithm proposed by Hartmann and Sch\"{u}tte in \cite{hartmannschuette_efficient}. The second consequence concerns the best approximation of the value function $F(\sigma,\cdot)$ that solves the Hamilton-Jacobi-Bellman equation of the stochastic optimal control problem (\ref{eq:stochastic_optimal_control_problem}).

Fix a finite collection $b:=(b_i)_{i=1}^n$ of linearly independent, nonconstant functions $b_i\in C^{2,\alpha}(\overline{\Omega};\mathbb{R})$, and let $S_i:=\left\{x\in\overline{\Omega}\ \vert\ b_i(x)\ne 0\right\}$ denote the support of $b_i$ in $\overline{\Omega}$. We shall assume that 
\begin{equation}\label{eq:supports}
 \lambda(S_i)>0,\hskip1ex\cup_{i=1}^n\overline{S_i}=\overline{\Omega},
\end{equation}
i.e. the support of each function $b_i$ has strictly positive Lebesgue measure, and the closures of the supports cover $\overline{\Omega}$. Let $\varphi_i:=\nabla b_i$ and consider the following subspace of $C^{1,\alpha}(\overline{\Omega};\mathbb{R}^d)$:
\begin{equation}\label{eq:approximating_subspace}
 \mathcal{U}_{\text{Markov}}(b):=\left\{u\in\mathcal{U}_{\text{Markov}}\ \biggr\vert\ u=u^a:=\sum_i a_i\nabla b_i\text{ for some }a\in\mathbb{R}^n\right\}.
\end{equation}
By linear independence of the basis vector fields $(\varphi_i)_i$, it holds that every $u\in\mathcal{U}_{\text{Markov}}(b)$ admits a unique representation $u=u^a$. Therefore, the restriction of the control functional $\phi^{\sigma,x}$ to the subspace $\mathcal{U}_{\text{Markov}}(b)$ may be uniquely represented by
\begin{equation}\label{eq:restricted_control_functional}
\widehat{\phi}^{\sigma,x}:\mathbb{R}^n\to \mathbb{R},\hskip3ex \widehat{\phi}^{\sigma,x}(a):=\phi^{\sigma,x}(u^a).
\end{equation}
By Corollary \ref{cor:useful_relation_first_variation}, we obtain an expression for the partial derivative of the function $\widehat{\phi}^{\sigma,x}$ with respect to the $i$-th coordinate $a_i$. 

\subsection{An initial value problem}
\label{ssec:initial_value_problem}

Consider the initial value problem
\begin{equation}\label{eq:gradient_descent_ivp}
 \frac{d}{dt}a(s)=-\nabla_a \phi^{\sigma,x}(a(s)),\hskip1ex a(0)\in\mathbb{R}^n.
\end{equation}
The main result of this section concerns the existence of solutions to (\ref{eq:gradient_descent_ivp}). To prove the result, we make the following 
\begin{myasmp}\label{asmp:ergodicity_of_uncontrolled_paths}
  Let $\nabla V\in C^{\alpha}(\overline{\Omega})$ and $\tau$ be defined as in (\ref{eq:first_exit_time}). Then for all $1\le i\le n$, there exists a pair $0\le t_0(i)<t_1(i)\le \tau$ such that for all $t_0(i)<t<t_1(i)$, $X^0_t\in S_i$ with positive $P^x$-probability.
\end{myasmp}
\begin{myprop}\label{prop:ergodicity_of_controlled_paths}
 Suppose that Assumptions \ref{asmp:regularity_asmp_dV_domain} and \ref{asmp:ergodicity_of_uncontrolled_paths} hold, and let $\tau$ be defined as in (\ref{eq:first_exit_time}). If $u^a\in\mathcal{U}_{\text{Markov}}(b)$, then for all $1\le i\le n$, there exists a pair $0\le t_0(i)<t_1(i)\le \tau$ such that for all $t_0(i)<t<t_1(i)$, $X^{u^a}_t\in S_i$ with positive $P^x$-probability.
\end{myprop}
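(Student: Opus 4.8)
The plan is to transfer the support-visiting property from the uncontrolled diffusion $X^0$ to the controlled diffusion $X^{u^a}$ by using the mutual absolute continuity of the corresponding path measures on the stopped $\sigma$-algebra $\mathcal{F}_\tau$: an event of positive $\mu^{0,x}$-probability will then have positive $\mu^{u^a,x}$-probability.

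First I would note that $u^a\in\mathcal{U}_{\text{Markov}}(b)\subset C^{1,\alpha}(\overline{\Omega};\mathbb{R}^d)$, so in particular $u^a\in C^{\alpha}(\overline{\Omega};\mathbb{R}^d)\cap L^\infty(\Omega;\mathbb{R}^d)$ and the feedback process $t\mapsto u^a(X^{u^a}_t)$ is predictable up to $\tau$, so that the stopped integral defining $M^{u^a}$ is a well-defined continuous local martingale. Applying Corollary \ref{cor:bounded_perturbations_yield_Lq_bounded_exp_MGs} with the admissible choices $u=0$ and $\varphi=u^a$, the stopped exponential martingale $\mathcal{E}((M^{u^a})^\tau)$ is $L^q(\mu^{0,x})$-bounded for every $q>1$, hence a uniformly integrable martingale under $\mu^{0,x}$ with terminal value $\mathcal{E}(M^{u^a})_\tau=\exp\!\big(M^{u^a}_\tau-\tfrac12\langle M^{u^a}\rangle_\tau\big)$. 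By the Cameron--Martin--Girsanov theorem recalled in \S\ref{ssec:stochastic_optimal_control_problem}, this uniform integrability — which here plays the role that Novikov's condition plays in general — implies that $\mu^{u^a,x}$ is obtained from $\mu^{0,x}$ on $\mathcal{F}_\tau$ by the change of measure with Radon--Nikodym density $\mathcal{E}(M^{u^a})_\tau$. Since this density is strictly positive $\mu^{0,x}$-almost surely, the measures $\mu^{0,x}$ and $\mu^{u^a,x}$ are mutually absolutely continuous on $\mathcal{F}_\tau$.

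Next, fix $1\le i\le n$ and let $0\le t_0(i)<t_1(i)\le\tau$ be the pair supplied by Assumption \ref{asmp:ergodicity_of_uncontrolled_paths}. Let $A_i$ denote the event that the canonical path lies in $S_i$ at every time $t\in(t_0(i),t_1(i))$ and has not exited $\Omega$ by time $t_1(i)$; since this event depends on the path only up to $t_1(i)\wedge\tau$, it belongs to $\mathcal{F}_{t_1(i)\wedge\tau}\subset\mathcal{F}_\tau$. Assumption \ref{asmp:ergodicity_of_uncontrolled_paths} asserts precisely that $\mu^{0,x}(A_i)>0$. By the mutual absolute continuity from the previous step, $\mu^{u^a,x}(A_i)>0$; unwinding $\mu^{u^a,x}=P^x\circ(X^{u^a})^{-1}$ then shows that, with positive $P^x$-probability, $X^{u^a}_t\in S_i$ for every $t\in(t_0(i),t_1(i))$ with $t_1(i)\le\tau$ — with the very same pair $(t_0(i),t_1(i))$ — which is the assertion of the proposition.

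I expect the main point needing care to be that the change of measure must be valid up to the first exit time $\tau$ and not merely up to deterministic times; this is exactly where the uniform integrability of the \emph{stopped} exponential martingale, supplied by Corollary \ref{cor:bounded_perturbations_yield_Lq_bounded_exp_MGs} rather than by Novikov's condition, is essential. One should also check that $A_i$ involves the controlled path only up to $\tau$, so that no extension of the feedback control $u^a$ outside $\overline{\Omega}$ is required.
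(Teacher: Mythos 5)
Your proposal is correct and follows essentially the same route as the paper's proof: invoke Corollary \ref{cor:bounded_perturbations_yield_Lq_bounded_exp_MGs} (with $u=0$, $\varphi=u^a$) to get $L^q(\mu^{0,x})$-boundedness and hence uniform integrability of the stopped exponential martingale, apply the change-of-measure theorem to conclude mutual absolute continuity of $\mu^{0,x}$ and $\mu^{u^a,x}$, and then transfer the positive-probability event guaranteed by Assumption \ref{asmp:ergodicity_of_uncontrolled_paths}. Your additional care in identifying the event $A_i$ as $\mathcal{F}_\tau$-measurable is a slightly more explicit rendering of the same argument, not a different one.
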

\begin{proof}
 By Assumption \ref{asmp:regularity_asmp_dV_domain}, we may apply Corollary \ref{cor:bounded_perturbations_yield_Lq_bounded_exp_MGs} to assert that $\mathcal{E}((M^{u^a})^\tau)$ is a $L^2(\mu^{0,x})$-bounded martingale, and hence is uniformly integrable with respect to $\mu^{0,x}$. By the change of measure theorem, the restrictions of $\mu^{0,x}$ and $\mu^{u^a,x}$ to $\mathcal{F}_t$ are mutually absolutely continuous for all $t>0$. The desired conclusion follows from Assumption \ref{asmp:ergodicity_of_uncontrolled_paths}.
\end{proof}
\begin{mycor}\label{cor:QV_of_M_varphi_i_bdd_away_from_zero}
 Given an arbitrary $a(0)\in\mathbb{R}^n$ and compact neighbourhood $\overline{N}$ of $a(0)$, it holds that 
 \begin{equation}\label{eq:QV_of_M_varphi_i_bdd_away_from_zero}
  m^x(\overline{N}):=\min_{a\in \overline{N}}\min_{1\le i\le n}E^{u^a,x}\left[\langle M^{\varphi_i}\rangle_\tau\right]>0.
 \end{equation}
\end{mycor}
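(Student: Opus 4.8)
The plan is to rewrite $f_i(a):=E^{u^a,x}[\langle M^{\varphi_i}\rangle_\tau]$ as an integral against the \emph{fixed} reference measure $\mu^{0,x}$, to show that $f_i(a)>0$ for every $a\in\mathbb{R}^n$ and every $1\le i\le n$ by combining non-degeneracy of the diffusion with the fact that $\nabla b_i$ does not vanish identically, and then to pass from pointwise positivity to a positive minimum using lower semicontinuity of $f_i$ in $a$ together with compactness of $\overline N$ (the minimum over the finite index set $\{1,\dots,n\}$ being harmless).

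Concretely, since $u^a\in C^{1,\alpha}(\overline{\Omega};\mathbb{R}^d)\subset C^\alpha(\overline{\Omega};\mathbb{R}^d)$, Corollary~\ref{cor:bounded_perturbations_yield_Lq_bounded_exp_MGs} shows that $\mathcal{E}((M^{u^a})^\tau)$ is an $L^2(\mu^{0,x})$-bounded, hence uniformly integrable, martingale; as in the proof of Proposition~\ref{prop:ergodicity_of_controlled_paths}, $\mu^{0,x}$ and $\mu^{u^a,x}$ are mutually absolutely continuous on each $\mathcal{F}_t$, and the reweighting formula~(\ref{eq:reweighting_formula}) — legitimate at the stopping time $\tau$ because $L^q$-boundedness for $q>1$ implies uniform integrability — yields
\begin{equation*}
 f_i(a)=E^{0,x}\left[\langle M^{\varphi_i}\rangle_\tau\,\mathcal{E}(M^{u^a})_\tau\right],\qquad \langle M^{\varphi_i}\rangle_\tau=\frac{1}{2\varepsilon}\int_0^\tau\vert\nabla b_i(X^0_s)\vert^2\,ds,
\end{equation*}
the last quantity being a path functional that does not depend on the measure. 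Because $b_i$ is a nonconstant $C^{2,\alpha}$ function on the connected set $\overline{\Omega}$, $\nabla b_i$ is not identically zero, so there exist an open ball with $\overline{B_i}\subset\Omega$ and $\eta_i>0$ such that $\vert\nabla b_i\vert\ge\eta_i$ on $B_i$. By Proposition~\ref{prop:ergodicity_of_controlled_paths} (equivalently, by the irreducibility of the uniformly elliptic diffusion $X^0$ that underlies Assumption~\ref{asmp:ergodicity_of_uncontrolled_paths}), the path $X^0$ spends a nondegenerate time interval inside $B_i$ before exiting $\Omega$ with positive $P^x$-probability; on that event $\langle M^{\varphi_i}\rangle_\tau\ge\eta_i^2(t_1-t_0)/(2\varepsilon)>0$. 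Since $\mathcal{E}(M^{u^a})_\tau>0$ $\mu^{0,x}$-a.s., it follows that $f_i(a)>0$ for every $a$ and $i$.

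It remains to show that each $f_i$ is lower semicontinuous on $\mathbb{R}^n$, after which the corollary follows: a positive lower semicontinuous function on the compact set $\overline N$ attains a strictly positive minimum, and $m^x(\overline N)=\min_{1\le i\le n}\min_{a\in\overline N}f_i(a)$ is then positive as the minimum of finitely many positive numbers. Writing $u^a=\sum_j a_j\nabla b_j$, the quantities $M^{u^a}_\tau=\sum_j a_j(2\varepsilon)^{-1/2}\int_0^\tau\nabla b_j(X^0_s)\cdot dB_s$ and $\langle M^{u^a}\rangle_\tau=\sum_{j,k}a_ja_k(2\varepsilon)^{-1}\int_0^\tau\nabla b_j\cdot\nabla b_k(X^0_s)\,ds$ are, respectively, a linear and a quadratic form in $a$ whose coefficients are $\mu^{0,x}$-a.s. finite (for the second, because $\nabla b_j\in L^\infty(\Omega)$ and $\tau\in L^1(\mu^{0,x})$); hence $a_n\to a$ implies $\mathcal{E}(M^{u^{a_n}})_\tau\to\mathcal{E}(M^{u^a})_\tau$ $\mu^{0,x}$-a.s., so $\langle M^{\varphi_i}\rangle_\tau\,\mathcal{E}(M^{u^{a_n}})_\tau\to\langle M^{\varphi_i}\rangle_\tau\,\mathcal{E}(M^{u^a})_\tau$ a.s., and Fatou's lemma gives $\liminf_n f_i(a_n)\ge f_i(a)$. (One can upgrade this to continuity of $f_i$ via dominated convergence and the $L^q$-bounds of Corollary~\ref{cor:bounded_perturbations_yield_Lq_bounded_exp_MGs}, but lower semicontinuity suffices.) The main obstacle is the pointwise positivity: one must know that the occupation measure of $X^0$ charges the open set $\{\vert\nabla b_i\vert>0\}$ with positive probability before exit from $\Omega$, which is precisely where the non-degeneracy encoded in Proposition~\ref{prop:ergodicity_of_controlled_paths} and Assumption~\ref{asmp:ergodicity_of_uncontrolled_paths} enters; the change of measure and the semicontinuity step are routine given Corollary~\ref{cor:bounded_perturbations_yield_Lq_bounded_exp_MGs}.
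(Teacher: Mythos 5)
Your proposal is correct, and it follows the same three-step skeleton as the paper's proof (pointwise positivity in $a$, some regularity of $a\mapsto E^{u^a,x}[\langle M^{\varphi_i}\rangle_\tau]$, then compactness of $\overline N$ and finiteness of the index set), but both nontrivial steps are executed differently. For the regularity step, the paper simply asserts that $\mu^{u^a,x}$ depends continuously on $a$ via the linear representation (\ref{eq:approximating_subspace}) and concludes continuity of the map; you instead reweight to the fixed reference measure via (\ref{eq:reweighting_formula}) and Corollary \ref{cor:bounded_perturbations_yield_Lq_bounded_exp_MGs}, exploit that $M^{u^a}_\tau$ and $\langle M^{u^a}\rangle_\tau$ are respectively linear and quadratic in $a$ with a.s.\ finite coefficients, and get lower semicontinuity from Fatou — which indeed suffices, and is the more rigorous of the two arguments. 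For the positivity step, the paper cites Proposition \ref{prop:ergodicity_of_controlled_paths} directly, which guarantees occupation of $S_i=\{b_i\ne 0\}$ with positive probability; what is actually needed for $\langle M^{\varphi_i}\rangle_\tau>0$ is occupation of $\{\vert\nabla b_i\vert>0\}$, and you correctly target the latter set (using that $b_i$ is nonconstant and $\Omega$ is connected to find a ball $B_i\subset\subset\Omega$ with $\vert\nabla b_i\vert\ge\eta_i$). Be aware, though, that your parenthetical "equivalently, by Proposition \ref{prop:ergodicity_of_controlled_paths}" is not literal — that proposition and Assumption \ref{asmp:ergodicity_of_uncontrolled_paths} speak of $S_i$, not of $B_i$ — so your argument really rests on the standard irreducibility/support property of the uniformly elliptic diffusion $X^0$ on $\Omega$, which is true (constant nondegenerate diffusion matrix, bounded H\"older drift) but is not formally established in the paper; stating that fact explicitly, rather than as an equivalence with Proposition \ref{prop:ergodicity_of_controlled_paths}, would close the only loose end. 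In exchange, your route buys a sharper justification than the paper's at both points where the paper is terse.
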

\begin{proof}
 By the representation (\ref{eq:approximating_subspace}), the distribution $\mu^{u^a,x}$ depends continuously on $a$. Therefore the map $a\mapsto \min_i E^{u^a,x}[\langle M^{\varphi_i}\rangle_\tau]$ is continuous. By Proposition \ref{prop:ergodicity_of_controlled_paths}, the map is strictly positive for every $a\in\mathbb{R}^n$. The conclusion follows by compactness.
\end{proof}

Now consider the restriction to $\mathcal{U}_{\text{Markov}}(b)$ of the stochastic optimal control problem defined in (\ref{eq:stochastic_optimal_control_problem}),
\begin{equation}\label{eq:restricted_stochastic_optimal_control_problem}
 \min_{a\in\mathbb{R}^n}\ \widehat{\phi}^{\sigma,x}(a)\hskip2ex\text{ subject to (2.1) }.
\end{equation}
The restricted problem differs from the original stochastic optimal control problem (\ref{eq:stochastic_optimal_control_problem}) only in the function space over which the control functional is to be minimised. The gradient descent algorithm proposed in \cite{hartmannschuette_efficient} for solving the restricted problem above consisted of the following steps: Given $\mathcal{U}_{\text{Markov}}(b)$, an initial condition $a(0)$, $N_{\text{iter}},N_{\text{traj}}\in\mathbb{N}$, and a sequence of descent steps $(h_\ell)_{0\le \ell \le N_{\text{iter}}-1}\subset (0,\infty)$, we have:
\begin{itemize}
 \item[] \textbf{for} $\ell=1:N_{\text{iter}}$ \textbf{do}
 \begin{itemize}
 \item Sample $N_{\text{traj}}$ trajectories of (\ref{eq:controlled_sde}) with $u=u^{a(i-1)}$
 \item Approximate $\widehat{\phi}^{\sigma,x}(a(\ell-1))$ and $\nabla_a\widehat{\phi}^{\sigma,x}(a(\ell-1))$ by Monte Carlo
 \item Update the coefficient vector $a(\ell-1)$ to $a(\ell)$ via
 \begin{equation}\label{eq:gradient_descent_algorithm}
  a(\ell)=a(\ell-1)-h_{\ell-1}\nabla_a\widehat{\phi}^{\sigma,x}(a(\ell-1))
 \end{equation}
 \end{itemize}
 \item[] \textbf{end}
\end{itemize}
That is, the gradient descent algorithm (\ref{eq:gradient_descent_algorithm}) is the discrete time, finite-sample size analogue of the initial value problem (\ref{eq:gradient_descent_ivp}). We shall study the well-posedness of the gradient descent algorithm via (\ref{eq:gradient_descent_ivp}). The following result shows that, under Assumptions \ref{asmp:regularity_asmp_dV_domain} and \ref{asmp:ergodicity_of_uncontrolled_paths}, the restricted problem (\ref{eq:restricted_stochastic_optimal_control_problem}) is well-posed.

\begin{mythm}\label{thm:well_posedness_of_gradient_descent}
Suppose that the hypotheses of Theorem \ref{thm:first_variation_of_first_variation} are satisfied. If (\ref{eq:uniform_lower_bound_for_W_sufficient_cond_for_convexity_undiscretised_functional}) holds, then there exists a unique $a_\infty\in\mathbb{R}^n$ such that
\begin{equation}\label{eq:unique_equilibrium}
 0=\nabla_a\widehat{\phi}^{\sigma,x}(a_\infty),
\end{equation}
and for every initial condition $a(0)\in\mathbb{R}^n$, there exists a unique solution $(a(t))_{0\le t< \infty}$ of the initial value problem (\ref{eq:gradient_descent_ivp}) to $a_\infty$. If, in addition to the above hypotheses, it holds that 
\begin{equation}\label{eq:non_overlap_condition_on_supports}
 \overline{S_i}\cap\overline{S_j}=\partial S_i\cap\partial S_j,
\end{equation}
then the unique solution converges exponentially to $a_\infty$, with
\begin{equation}\label{eq:rate_of_convergence_under_strong_convexity}
  \vert a_\infty-a(t)\vert^2\le \vert a(0)-a_\infty\vert^2 \exp(-t \gamma m^x(\overline{N}) ),
 \end{equation}
 where $m^x(\overline{N})$ is defined in (\ref{eq:QV_of_M_varphi_i_bdd_away_from_zero}).
\end{mythm}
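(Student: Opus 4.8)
The plan is to transfer everything to the finite-dimensional, strictly convex function $\widehat{\phi}^{\sigma,x}$ and then to read off the behaviour of the flow (\ref{eq:gradient_descent_ivp}) from convexity together with a compactness argument. First I would record that, by Theorems \ref{thm:first_variation} and \ref{thm:first_variation_of_first_variation} together with routine dominated-convergence estimates (based on the moment bounds of Corollaries \ref{cor:bounded_perturbations_yield_Lq_bounded_exp_MGs} and \ref{cor:nonequilibrium_estimator_moments_of_all_orders} and on the continuity of $a\mapsto\mu^{u^a,x}$), the map $\widehat{\phi}^{\sigma,x}$ is of class $C^2$ on $\mathbb{R}^n$, with $\partial_{a_i}\widehat{\phi}^{\sigma,x}(a)=\Phi^{\sigma,x}(u^a;\varphi_i)$ and Hessian $H(a)_{ij}=\Phi^{\sigma,x}(u^a;\varphi_i,\varphi_j)$; by the bilinearity and symmetry of the second variation in its last two arguments, $v^\top H(a)v=\Phi^{\sigma,x}(u^a;\varphi_v,\varphi_v)$, where $\varphi_v:=\sum_i v_i\varphi_i=\nabla(\sum_i v_i b_i)\in C^{1,\alpha}(\overline{\Omega};\mathbb{R}^d)$. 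Since $a\mapsto u^a$ is a linear injection into $C^{1,\alpha}(\overline{\Omega};\mathbb{R}^d)$ and $\phi^{\sigma,x}$ is strictly convex there by Theorem \ref{thm:uniform_lower_bound_for_W_sufficient_cond_for_convexity_undiscretised_functional} (whose hypotheses include (\ref{eq:uniform_lower_bound_for_W_sufficient_cond_for_convexity_undiscretised_functional})), the composition $\widehat{\phi}^{\sigma,x}$ is strictly convex on $\mathbb{R}^n$; in particular it has at most one critical point, which already settles uniqueness of $a_\infty$.

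For the existence of $a_\infty$ I would prove that $\widehat{\phi}^{\sigma,x}$ is coercive, $\widehat{\phi}^{\sigma,x}(a)\to\infty$ as $\vert a\vert\to\infty$; coercivity together with continuity and strict convexity then yields a unique global minimiser, which is the unique zero of $\nabla_a\widehat{\phi}^{\sigma,x}$, i.e.\ the point $a_\infty$ of (\ref{eq:unique_equilibrium}). \textbf{This is the step I expect to be the main obstacle.} The underlying mechanism is that, by (\ref{eq:uniform_lower_bound_for_W_sufficient_cond_for_convexity_undiscretised_functional}) and the nonnegativity of $\kappa_r$ in Assumption \ref{asmp:regularity_asmp_dV_domain}, we have $W\ge\sigma^{-1}$ $\mu^{u^a,x}$-a.s., whence
\begin{equation*}
 \widehat{\phi}^{\sigma,x}(a)=E^{u^a,x}[W]+\frac{1}{2\sigma}E^{u^a,x}\left[\langle M^{u^a}\rangle_\tau\right]\ge\sigma^{-1}+\frac{1}{4\varepsilon\sigma}E^{u^a,x}\left[\int_0^\tau\vert u^a(X^{u^a}_s)\vert^2\,ds\right],
\end{equation*}
and one must show that the control-cost term diverges as $\vert a\vert\to\infty$. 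Heuristically this holds because, as the feedback $u^a$ grows, the controlled diffusion is forced out of the bounded set $\Omega$ in a time that shrinks no faster than $\vert a\vert^{-1}$ (it must traverse at least $\mathrm{dist}(x,\partial\Omega)$), so the integral decays no faster than $\vert a\vert^{-1}$ and the product grows at least linearly in $\vert a\vert$. Turning this into a rigorous lower bound uniform over $\{\vert v\vert=1\}$, and handling directions in which $u^v$ vanishes on the region where the path typically dwells, is the delicate part; for this I would use the covering condition $\cup_i\overline{S_i}=\overline{\Omega}$ together with non-degeneracy of the diffusion (the ingredient behind Proposition \ref{prop:ergodicity_of_controlled_paths} and Corollary \ref{cor:QV_of_M_varphi_i_bdd_away_from_zero}) and a lower bound on the controlled first exit time.

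Granting the existence of $a_\infty$, the statements about the flow are soft. Local Lipschitz continuity of $\nabla_a\widehat{\phi}^{\sigma,x}$ (from $\widehat{\phi}^{\sigma,x}\in C^2$) gives a unique maximal solution through each $a(0)$ by Picard--Lindel\"of. Differentiating $t\mapsto\vert a(t)-a_\infty\vert^2$ along the flow and using $\nabla_a\widehat{\phi}^{\sigma,x}(a_\infty)=0$,
\begin{equation*}
 \frac{d}{dt}\vert a(t)-a_\infty\vert^2=-2\,(a(t)-a_\infty)\cdot\bigl(\nabla_a\widehat{\phi}^{\sigma,x}(a(t))-\nabla_a\widehat{\phi}^{\sigma,x}(a_\infty)\bigr)\le 0
\end{equation*}
by monotonicity of the gradient of the convex function $\widehat{\phi}^{\sigma,x}$, with strict inequality unless $a(t)=a_\infty$. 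Hence the trajectory stays in the compact ball $\overline{N}:=\{\,a\in\mathbb{R}^n\ \vert\ \vert a-a_\infty\vert\le\vert a(0)-a_\infty\vert\,\}$, so it extends to all $t\in[0,\infty)$, and $a(t)\to a_\infty$ by a LaSalle-type argument: $\vert a(t)-a_\infty\vert^2$ is nonincreasing and bounded below, and were its limit positive the trajectory would eventually remain in a compact annulus around $a_\infty$ on which the continuous map $(a-a_\infty)\cdot\nabla_a\widehat{\phi}^{\sigma,x}(a)$ (positive off $a_\infty$ by strict monotonicity of the gradient) is bounded below by a positive constant, forcing $\vert a(t)-a_\infty\vert^2$ to decrease past $0$, a contradiction.

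Finally, under the extra condition (\ref{eq:non_overlap_condition_on_supports}) the closures $\overline{S_i}$ meet pairwise only in boundary sets of Lebesgue measure zero, while $\varphi_i=\nabla b_i$ vanishes a.e.\ outside $\overline{S_i}$, so $\vert\varphi_v(y)\vert^2=\sum_i v_i^2\vert\varphi_i(y)\vert^2$ for a.e.\ $y\in\overline{\Omega}$ and hence $\langle M^{\varphi_v}\rangle_\tau=\sum_i v_i^2\langle M^{\varphi_i}\rangle_\tau$ $\mu^{u^a,x}$-a.s. Combining this with Corollary \ref{cor:strong_convexity_on_predictable_control_processes} and the definition (\ref{eq:QV_of_M_varphi_i_bdd_away_from_zero}), I obtain, for every $a$ in the convex compact set $\overline{N}$ from the previous paragraph,
\begin{equation*}
 v^\top H(a)v=\Phi^{\sigma,x}(u^a;\varphi_v,\varphi_v)\ge\gamma\,E^{u^a,x}\left[\langle M^{\varphi_v}\rangle_\tau\right]=\gamma\sum_i v_i^2 E^{u^a,x}\left[\langle M^{\varphi_i}\rangle_\tau\right]\ge\gamma\,m^x(\overline{N})\,\vert v\vert^2,
\end{equation*}
i.e.\ $\widehat{\phi}^{\sigma,x}$ is uniformly strongly convex on $\overline{N}$. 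Then, writing $\nabla_a\widehat{\phi}^{\sigma,x}(a(t))=\bigl(\int_0^1 H(a_\infty+s(a(t)-a_\infty))\,ds\bigr)(a(t)-a_\infty)$ and noting that, by convexity of $\overline{N}$, the segment from $a_\infty$ to $a(t)$ lies in $\overline{N}$ so that the above Hessian bound applies along it, substitution into the displayed identity for $\tfrac{d}{dt}\vert a(t)-a_\infty\vert^2$ produces a Gr\"onwall differential inequality whose integration yields the exponential estimate (\ref{eq:rate_of_convergence_under_strong_convexity}).
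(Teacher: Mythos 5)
Your route for everything except one step coincides with the paper's: you transfer the strict convexity of $\phi^{\sigma,x}$ (Theorem \ref{thm:uniform_lower_bound_for_W_sufficient_cond_for_convexity_undiscretised_functional}) through the linear injection $a\mapsto u^a$ to conclude that $\widehat{\phi}^{\sigma,x}$ is $C^2$ and strictly convex, identify $v^{\top}H(a)v$ with the second variation along $\varphi_v$, and, under (\ref{eq:non_overlap_condition_on_supports}), combine Corollary \ref{cor:strong_convexity_on_predictable_control_processes} with the a.e.\ orthogonality of the $\varphi_i$ and the constant $m^x(\overline{N})$ from (\ref{eq:QV_of_M_varphi_i_bdd_away_from_zero}) to get the uniform Hessian bound $\gamma\, m^x(\overline{N})\vert v\vert^2$ on the compact convex set swept out by the segments, closing with Gr\"onwall; this is exactly the paper's argument (the paper uses Taylor's theorem at an intermediate point instead of your integral form of the gradient, and your version even yields a factor $2$ in the exponent, which of course implies (\ref{eq:rate_of_convergence_under_strong_convexity})). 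Your confinement-of-the-trajectory plus LaSalle argument for global existence and convergence of the flow (\ref{eq:gradient_descent_ivp}) is more explicit than the paper's appeal to standard ODE results, and is a welcome addition since the statement asserts convergence to $a_\infty$ even without (\ref{eq:non_overlap_condition_on_supports}). One small caveat: your pathwise identity $\langle M^{\varphi_v}\rangle_\tau=\sum_i v_i^2\langle M^{\varphi_i}\rangle_\tau$ needs the fact that the occupation measure of the non-degenerate diffusion does not charge Lebesgue-null sets; the paper only uses vanishing of the expectations $E^{u^a,x}[\langle M^{\varphi_i},M^{\varphi_j}\rangle_\tau]$, which is all the estimate requires.

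The genuine gap is the one you flagged yourself: the existence of $a_\infty$ in (\ref{eq:unique_equilibrium}). Strict convexity plus $C^2$ gives at most one critical point, not the existence of one, and your proposed remedy --- coercivity of $\widehat{\phi}^{\sigma,x}$ via the lower bound $\widehat{\phi}^{\sigma,x}(a)\ge\sigma^{-1}+(4\varepsilon\sigma)^{-1}E^{u^a,x}\bigl[\int_0^\tau\vert u^a(X^{u^a}_s)\vert^2\,ds\bigr]$ together with the heuristic that the exit time decays no faster than $\vert a\vert^{-1}$ --- is left as a sketch, so as written the existence claim is not established; making the bound uniform over directions and handling directions in which $u^v$ is small where the path dwells is precisely the hard part, and it is not routine. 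You should know, however, that the paper's own proof is no stronger on this point: it deduces ``there exists a unique global minimiser'' directly from twice continuous differentiability and strict convexity on bounded convex subsets, which by itself is not a valid implication (consider $t\mapsto e^t$ on $\mathbb{R}$), so you have isolated a real subtlety in the argument rather than missed a device the paper supplies. To complete your route you must either carry out the coercivity estimate rigorously or obtain boundedness of a sublevel set by some other means (e.g.\ a quantitative lower bound on the expected control cost as $\vert a\vert\to\infty$, which is delicate precisely because the first exit time degenerates as the feedback grows).
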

\begin{proof}
 Since the hypotheses of Theorem \ref{thm:first_variation_of_first_variation} hold, and since  (\ref{eq:uniform_lower_bound_for_W_sufficient_cond_for_convexity_undiscretised_functional}) holds, it follows that the control functional $\phi^{\sigma,x}$ is strictly convex on $C^{1,\alpha}(\overline{\Omega};\mathbb{R}^d)$, by Theorem \ref{thm:uniform_lower_bound_for_W_sufficient_cond_for_convexity_undiscretised_functional}. This implies that the function $\widehat{\phi}^{\sigma,x}$ is twice continuously differentiable and strictly convex on any bounded, open, nonempty convex subset of $\mathbb{R}^n$, so there exists a unique global minimiser that satisfies (\ref{eq:unique_equilibrium}). By the standard results on existence and uniqueness of solutions to ordinary differential equations, it follows that for any initial condition $a(0)$, there exists a unique solution $(a(t))_{t\ge 0}$ that exists for all time.
 
 By Theorem \ref{thm:first_variation_of_first_variation}, (\ref{eq:h_mixed_second_variation}), and linearity of the representation $u^a$, it holds that, given the Hessian $\nabla^2_a\widehat{\phi}^{\sigma,x}:\mathbb{R}^n\to \mathbb{R}^{n\times n}$, arbitrary $a'\in\mathbb{R}^n$ and $0\ne z\in\mathbb{R}^n$,
 \begin{align*}
  z\cdot\nabla^2_a\widehat{\phi}^{\sigma,x}(a')z&=E^{u^{a'},x}\left[K^{\sigma,u^{a'},0}(M^{u^z}_\tau)^2+\sigma^{-1}\left(\langle M^{u^z}\rangle_\tau+2\langle M^{u^{a'}},M^{u^z}\rangle_\tau M^{u^z}_\tau\right)\right].
  \end{align*}
 By (\ref{eq:second_variation}), the value of $z\cdot \nabla^2_a\widehat{\phi}^{\sigma,x}(a')z$ equals the value of the second variation of $\phi^{\sigma,x}$ at $u^{a'}$ along the element $u^z$. By Corollary \ref{cor:strong_convexity_on_predictable_control_processes},
 \begin{equation}\label{eq:coercive_01}
  z\cdot\nabla^2_a\widehat{\phi}^{\sigma,x}(a')z\ge \gamma E^{u^{a'},x}\left[\langle M^{u^z}\rangle_\tau\right].
 \end{equation}
By the chain rule, Taylor's theorem and the property (\ref{eq:unique_equilibrium}), there is a $z\in\mathbb{R}^n$ that is a convex combination of $a(s)$ and $a_\infty$ such that
 \begin{align}
  \frac{d}{dt}\frac{1}{2}\vert a(s)-a_\infty\vert^2 &\le -\frac{1}{2}(a_\infty-a(s))\cdot\nabla^2_a\widehat{\phi}^{\sigma,x}(z) (a_\infty-a(s))
  \nonumber
  \\
  &\le -\frac{1}{2}\gamma E^{u^z,x}\left[\langle M^{u^{a_\infty-a(s)}}\rangle_\tau\right].
 \label{eq:lyapunov_decay_estimate}
 \end{align}
 Observe 
 that by Assumption \ref{asmp:regularity_asmp_dV_domain}, $\mu^{u^a,x}$ is unique, and therefore $X^{u^a}$ has the strong Markov property. Under the condition (\ref{eq:non_overlap_condition_on_supports}) on the supports, it holds that $\varphi_i(x)\cdot\varphi_j(x)$ is zero except on a set of zero Lebesgue measure, and therefore $\langle M^{\varphi_i},M^{\varphi_j}\rangle_\tau$ has vanishing $\mu^{u^a,x}$-expectation. Let $\overline{N}$ be the smallest compact set that contains all convex combinations of $a(t)$ and $a_\infty$ for all $t\ge 0$. It follows from the vanishing of the covariance processes that $\phi^{\sigma,x}$ is strongly convex on $N$, since
 \begin{equation*}
  E^{u^z,x}\left[\langle M^{u^{a_\infty-a(s)}}\rangle_\tau\right]\ge m^x(\overline{N}) \vert a_\infty-a(s)\vert^2.
 \end{equation*}
Substituting the above inequality in (\ref{eq:lyapunov_decay_estimate}), integrating the resulting differential inequality, and applying Gronwall's lemma, we obtain (\ref{eq:rate_of_convergence_under_strong_convexity}).
\end{proof}

\subsection{Approximation of solution to the HJB equation}
\label{ssec:approximation_of_value_function}
In this section, we relate the unique minimiser of the restricted control functional $\widehat{\phi}^{\sigma,x}$ to the best approximation in the finite-dimensional subspace $\mathcal{U}_{\text{Markov}}(b)$ of $F(\sigma,\cdot)$. Recall that, by Theorem \ref{thm:feynman_kac_representation_uniformly_elliptic_case_uniformly_holder}, the function $\psi(\sigma,\cdot)$ for $\psi$ defined in (\ref{eq:mgf}) belongs to the space $C^{2,\alpha}(\overline{\Omega})$. Since $\psi(\sigma,\cdot)=\exp(-\sigma F(\sigma,\cdot))$, this implies that $F(\sigma,\cdot)\in C^{2,\alpha}(\overline{\Omega})$, and since $\overline{\Omega}$ is compact, this implies that $F(\sigma,\cdot)\in L^2(\Omega;\mathbb{R})$. In fact, by the same reasoning, $F(\sigma,\cdot)$ is an element of the Sobolev space $H^{2,p}(\Omega)$ for $1\le p<\infty$. The unique equilibrium $a_\infty$ defined in (\ref{eq:unique_equilibrium}) gives the best approximation in the space $\mathcal{U}_{\text{Markov}}(b)$ of the optimal control $u^{\sigma}_{\text{opt}}$. Therefore, the unique equilibrium $a_\infty$ gives the best approximation of $\nabla_x F(\sigma,\cdot)$, up to scaling. Since we have assumed $\Omega$ to be a connected, bounded domain with $C^{2,\alpha}$-boundary, we may apply the Poincar\'{e} inequality as follows: define
\begin{equation*}
 F_{\text{approx}}(\sigma,x'):=\sum_i (a_\infty)_i b_i(x')+\left(\widehat{\phi}^{\sigma,x}(a_\infty)-\sum_i (a_\infty)_i b_i(x)\right),
\end{equation*}
where the additive constant in the parentheses on the right-hand side ensures that $F_\text{approx}(\sigma,x)=\widehat{\phi}^{\sigma,x}(a_\infty)$. Define $F_{\text{err}}$ and the average value $(F_\text{err})_\Omega$ by
\begin{equation*}
 F_{\text{err}}(\sigma,x):=F(\sigma,x)-F_{\text{approx}}(\sigma,x),\hskip1ex (F_\text{err})_\Omega:=\frac{1}{\text{vol}(\Omega)}\int_\Omega F_{\text{err}}(\sigma,x)dx.
\end{equation*}
Then since $F_{\text{err}}(\sigma,\cdot)\in W^{1,p}(\Omega)$, the Poincar\'{e} inequality yields the estimate
\begin{equation}\label{eq:error_estimate}
 \Vert F_{\text{err}}(\sigma,\cdot)-(F_{\text{err}})_\Omega\Vert_{L^p(\Omega)}\le C(n,p,\Omega)\Vert \nabla_x F_{\text{err}}(\sigma,\cdot)\Vert_{L^p(\Omega)}
\end{equation}
for $1\le p<\infty$ and $C(n,p,\Omega)$ independent of $F_{\text{err}}(\sigma,\cdot)$. The estimate emphasises the importance of the approximation quality of the subspace $\mathcal{U}_{\text{Markov}}(b)$: as the approximation quality of $\mathcal{U}_{\text{Markov}}(b)$ improves, the right-hand side of (\ref{eq:error_estimate}) decreases, giving smaller $L^p$-errors in the approximation of $F(\sigma,\cdot)$.

\section{Conclusion}
\label{sec:conclusion}

In this article, we proved that the control functional of a stochastic optimal control problem is strictly convex, under the conditions that guarantee the existence and uniqueness of a value function that is uniformly H\"{o}lder continuous on the closure of a bounded, $C^{2,\alpha}$ domain. Our results relied on applying Fredholm theory in order to show that the moment generating function of the first exit time with respect to the law $\mu^{u,x}$ for feedback controls $u\in L^\infty(\Omega)$ is finite on any bounded open interval containing the origin. From this result, we obtained the $L^2$-boundedness of the exponential martingale associated to the feedback control. We then used $L^2$-boundedness to derive an expression for the second variation of the control functional. Positive definiteness then followed from a specific lower bound on the terminal cost function. The result of strict convexity holds independently of the dimension $d$ of the domain. The significance of this result is that one may solve the Hamilton-Jacobi-Bellman boundary value problem, and thereby the rare event importance sampling problem, by using methods for convex optimisation in conjunction with Monte Carlo methods. The advantage of doing so is that both classes of methods scale well with dimension.

\section*{Acknowledgments}

The author thanks Christof Sch\"{u}tte, Carsten Hartmann, and Tim Sullivan from the Freie Universit\"{a}t Berlin for their encouragement and support of the thesis on which this article is based, Patrick J. Fitzsimmons from the University of California at San Diego for pointing out Kazamaki's book \cite{kazamaki}, and Giacomo di Ges\`{u} from CERMICS - \'{E}cole des Ponts ParisTech for pointing out an important subtlety concerning the mutual absolute continuity of path measures.

\bibliographystyle{siam}
\bibliography{thesis_abridged}

\end{document}